\newtheorem{theorem}{Theorem}
\newtheorem{corollary}{Corollary}
\newtheorem{proposition}{Proposition}
\newtheorem{lemma}{Lemma}
\newtheorem{remark}{Remark}
\newtheorem{definition}{Definition}
\newtheorem{example}{Example}
\newtheorem{conjecture}{Conjecture}
\def\supp{\text{supp}}
\def\N{\mathbb N}
\def\Q{\mathbb Q}
\def\Z{\mathbb Z}
\def\R{\mathbb R}
\def\eps{\varepsilon}
\def\ie{{\em i.e.,\ }}
\def\eg{{\em e.g.\ }}
\def\mapm{\stackrel{-}{\mapsto}}
\def\mapp{\stackrel{+}{\mapsto}}
\def\orb{\mbox{orb}}
\def\per{\mbox{per}}
\def\supp{\mbox{supp}}
\def\circle{{\mathbb S}^1}
\newcommand{\EE}{\mathcal{E}}
\newcommand{\QS}{\mathbb{Q}_{\rm s}}
\newcommand{\QMAX}{\mathbb{Q}_{s}^{\rm max}}
\newcommand{\llbracket}{[ \! [}
\newcommand{\rrbracket}{] \! ]}
\def\e{\epsilon}
\begin{document}

\title{Matching in a family of piecewise affine interval maps}

\author{Henk Bruin, Carlo Carminati, Stefano Marmi, Alessandro Profeti}
\address[Henk Bruin]{Faculty of Mathematics \\
University of Vienna \\
Oskar Morgensternplatz 1, 1090 Vienna, Austria}
\email[Henk Bruin]{henk.bruin@univie.ac.at}
\address[Carlo Carminati]{Dipartimento di Matematica \\
Universit\`a di Pisa \\ Largo Bruno Pontecorvo 5, I-56127, Italy}
\email[Carlo Carminati]{carminat@dm.unipi.it}
\address[Stefano Marmi]{Scuola Normale Superiore \\ 
Piazza dei Cavalieri 7, 56126 Pisa, Italy}
\email[Stefano Marmi]{s.marmi@sns.it}
\address[Alessandro Profeti]{Scuola Normale Superiore \\ 
Piazza dei Cavalieri 7, 56126 Pisa, Italy}
\email[Alessandro Profeti]{a.profeti@sns.it}

\date{July 2017 -- compiled \today}

\subjclass[2010]{37E10, 11J70, 11A55, 11K16, 11K50, 11R06, 37E05, 37E45, 37A45}
\keywords{matching, interval map, Markov partition, invariant density, entropy, period doubling}

\maketitle

\section{Introduction}
In the setting of the dynamics of piecewise smooth circle maps, the phenomenon of ``matching'' refers to the
property that upper and lower orbits of the singularities merge with coinciding one-sided  derivatives, 
and it becomes particularly interesting when
it happens over non-trivial intervals in parameter space.
This is observed in the family of shifted $\beta$-transformations
$x \mapsto \beta x + \alpha \pmod 1$ for the orbits
of $0$ and $1$, which may be thought of as the unique jump discontinuity of a map defined on the circle.
Similarly, matching for the orbits of $\alpha$ and $\alpha-1$,
 is well-studied in the family of Nakada's $\alpha$-continued fractions (which is probably the setting
 where this phenomenon was noted for the first time)
$$
T_\alpha:[\alpha-1,\alpha] \to [\alpha-1,\alpha], \qquad 
x \mapsto \frac{1}{|x|} - \lfloor \frac{1}{|x|} + 1-\alpha \rfloor,
$$
see \cite{N,NN,KSS,CT3}. Other families of $\alpha$-continued fractions
displaying matching were studied recently\footnote{depending on authors, matching is sometimes referred as 
"cycle property" or "synchronization".} \cite{KU1, KU2} (see also \cite{CIT}), but also in
\cite{CKS}, where a case where the underlying group is not the modular group is considered.
 
The structure of the matching set in parameter space
is often connected to various number-theoretic properties and 
bifurcation properties of the (complex) logistic family
$z \mapsto z^2+c$, see \cite{BCIT,CT1,CT2,CT3,T}.
(See also Dajani \& Kalle \cite{DK}.) 

Matching is the cause that these maps have piecewise smooth
(or even piecewise constant) invariant densities, very much like
the situation when a Markov partition would have existed.
Also, entropy depends monotonically on the parameter in the interior of each component of the matching set.

Both phenomena were observed by Botella-Soler et al.\ \cite{BOR,BORG}
in the family of piecewise 
affine maps $(G_\beta)_{\beta \in \R}$ defined by
\begin{equation}\label{eq:mapBORG}
G_\beta(x) = \begin{cases}
G_\beta^-(x) = x + 2 & \text{ if } x \le 0,\\
G_\beta^+(x) = \beta -sx & \text{ if } x \ge 0.
\end{cases}
 \ \ (s>1)\end{equation}
when the slope $s$ of the expanding branch attains some specific values ($s=2$, $s=\frac{\sqrt{5}+1}{2}$, etc.).

For us, it is more convenient to change coordinates: for fixed $s>1$ we set 
\begin{equation}\label{eq:map}
Q_\gamma(x) = \begin{cases}
x+1,  & x \leq \gamma,\\
1+s(1-x), & x>\gamma,
\end{cases}
\end{equation}
see Figure~\ref{fig:maps}.

\begin{figure}[h]
\begin{center}
\unitlength=4mm
\begin{picture}(28,10)(0,0)
\put(0,5){\line(1,0){10}} \put(5,0){\line(0,1){10}}
\put(20,0){\line(1,1){10}}
\put(0,0){\line(1,1){10}}
\put(4.7,7){\line(1,0){0.6}}\put(4.3,6.8){\tiny $2$}
\put(4.7,9.5){\line(1,0){0.6}}\put(4.3,9.3){\tiny $\beta$}
\thicklines
\put(0,2){\line(1,1){5}} \put(5,9.5){\line(1,-2){4.5}}
\put(0,2.1){\line(1,1){5}} \put(5,9.6){\line(1,-2){4.5}}
\thinlines
\put(20,2){\line(1,0){10}} \put(22,0){\line(0,1){10}}
\put(26.7,1.7){\line(0,1){0.6}}\put(26.4,0.8){\tiny $1$}
\put(24.7,1.7){\line(0,1){0.6}}\put(24.4,0.8){\tiny $\gamma$}
\put(21.7,6.7){\line(1,0){0.6}}\put(21,6.5){\tiny $1$}
\thicklines
\put(17.2,1.9){\line(1,1){7.2}} \put(24.7,10.3){\line(1,-2){5}} 
\put(17.2,2){\line(1,1){7.2}} \put(24.7,10.4){\line(1,-2){5}} 
\end{picture}
\caption{The conjugate families $G_\beta$ and $Q_\gamma$ for slope $s=2$.}
\label{fig:maps}
\end{center}
\end{figure}
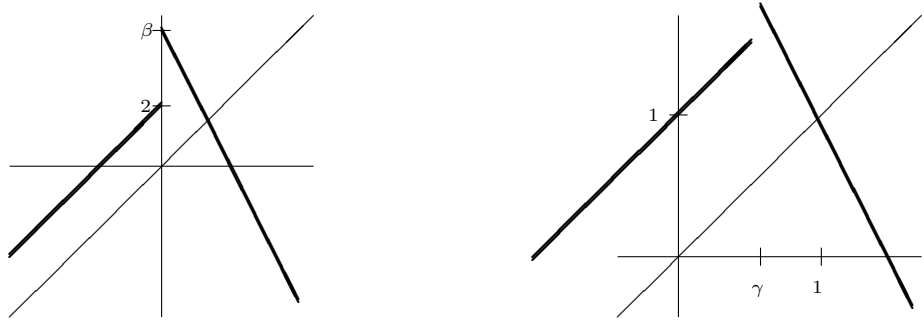

These families are conjugate via a linear change of coordinates
$$ 
H \circ Q_\gamma = G_\beta \circ H \ \ \mbox{ for } H(x):=2(x-\gamma),
\ \beta:=2(1+s)(1-\gamma).
$$
Note that this change of coordinates $\beta=\beta(\gamma)$ reverses the orientation in
 parameter space; therefore, when passing from the family $(Q_\gamma)$ to the family $(G_\beta)$, the results about
the monotonicity of entropy (see Theorem~\ref{thm:monotone})
will reverse accordingly. 

The matching in the family $(Q_\gamma)$ is  between the upper and lower orbit of the discontinuity point $\gamma$.

\begin{figure}[h]
\centering
\includegraphics[scale=0.45]{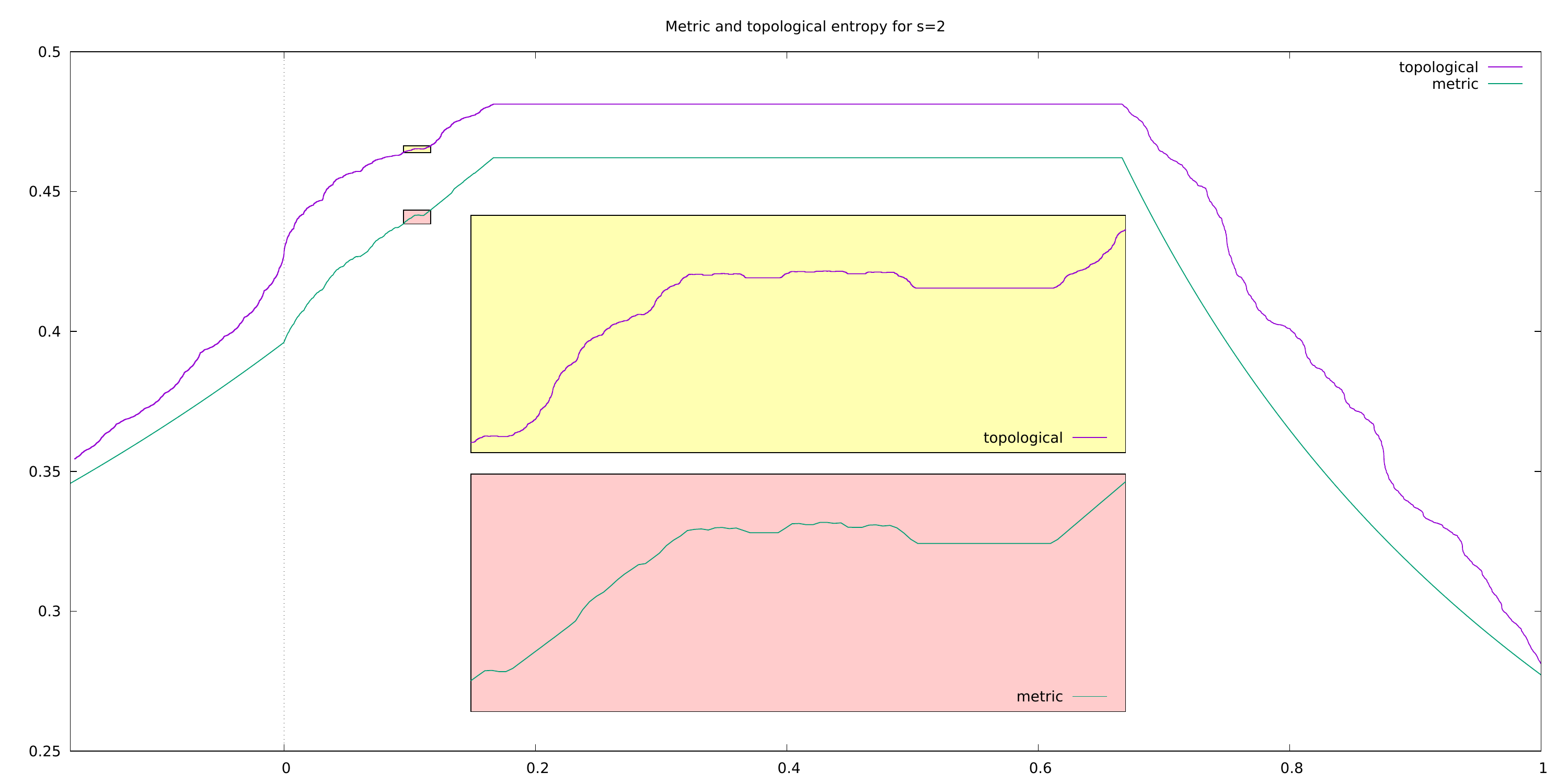}
\caption{Topological and metric entropies of $Q_\gamma$ for $s=2$ as functions of $\gamma$.}
\label{fig:entropies}
\end{figure}

As one  can guess from Figure~\ref{fig:entropies}, there are specific values of the slope $s$ for which matching
occurs for most parameters. 
Another interesting feature, evident from Figure~\ref{fig:entropies}, are the intervals on which entropy is constant,
called {\em plateaux}. We will see that this is due to a property we call {\em neutral matching},
and plateaux appear to be the closure of countably many 
components of the matching set (matching intervals), and they
consist of much more than single (or even single ``cascades'' of) 
matching intervals. 

All these features are best described in the case that the slope $s \geq 2$ is an integer.
Indeed, in all these cases we can actually prove that 
the matching occurs for Lebesgue-a.e. value $\gamma$, and the set $\EE$ where matching fails (called the {\em bifurcation set}) has zero measure (and yet the Hausdorff dimension of $\EE$ equals 1). As it is often the case in dynamical problems, the bifurcation set has a fractal structure, and
understanding the structure (and self-similarities) of this set will be important in order to characterize 
the plateaux of the entropy function. To accomplish this goal we will use methods from symbolic dynamics, 
including substitutions.

As we mentioned before, matching occurs in various one dimensional dynamical systems.
The purpose of this paper is to investigate extensively all the features connected to the 
matching phenomenon in the most simple setting, namely piecewise affine maps.

We start with a result, which applies quite in general, showing that matching  for a piecewise affine expanding map $T$ implies that the density of any 
 invariant measure is a simple function (\ie it is piecewise constant). Then we will consider the phenomenon of matching in connection  with the family
$Q_\gamma$; in this setting we shall both analyze its effects on the shape of the entropy, but we will also investigate for which slopes $s$ matching occurs, and how it is frequent. The most complete picture will be given in the case the slope $s \ge 2$ is an integer, however numerical experiments show that also  the cases of algebraic non-integer slopes such as the golden mean or other Pisot numbers are particularly intriguing and deserve further investigation.

\subsection{Piecewise constant densities}
 Let us recall that if $f$ is a piecewise continuous map the upper/lower orbit of a point $c$ are respectively
$$ f^k(c^+):=\lim_{x\to c^+} f^k (x), \ \ \  f^k(c^-):=\lim_{x\to c^-} f^k (x), \ \ \  (k\in \N)$$
The fact that the invariant density is piecewise constant
is a consequence of the fact that upper and lower orbits of each singularity eventually match with coinciding derivatives:

\begin{definition}\label{def:matchingg}
Let $T: \circle \to \circle$ be a piecewise smooth,
eventually expanding circle map; we say that $T$ satisfies  
{\em matching condition} if for every
discontinuity point $c$ (either of $T$ or of  $T'$) 
there exist positive integers $\kappa^\pm$ such that
the following  holds:
\begin{equation}\label{eq:match}
T^{\kappa^-}(c^-)=  T^{\kappa^+}(c^+), \ \ \mbox{ and } \ \  (T^{\kappa^-})'(c^-)=  (T^{\kappa^+})'(c^+).
\end{equation}
The integers $\kappa^\pm$ are called {\em matching exponents} of the discontinuity point $c$.
\end{definition}

For these maps the role of Markov partition is namely taken over by the
{\em prematching partition}, \ie the complementary intervals of
the {\em prematching set} 
\begin{equation}\label{eq:prematching}
PM:=\bigcup_c \left( \bigcup_{j=1}^{\kappa^-_c-1} T^j(c^-) \cup \bigcup_{j=1}^{\kappa^+_c-1} T^j(c^+)\right).
\end{equation}

\begin{theorem}\label{thm:density}
Let $T: \circle \to \circle$ be a piecewise affine,
eventually expanding circle map, such that 
the matching condition holds.
Then $T$ preserves an absolutely continuous invariant probability 
$d\mu = h\, dx$
and $h$ is constant on elements of the prematching partition.
\end{theorem}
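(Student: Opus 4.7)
The plan is to study the discontinuity set of the invariant density via the Perron--Frobenius transfer operator $\L$ of $T$, and show that the matching condition forces this set to be contained in $PM$.

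Existence of an absolutely continuous invariant probability $d\mu=h\,dx$ with $h$ of bounded variation is standard for piecewise affine eventually expanding maps (Lasota--Yorke), so the real content is locating the jumps of $h$. For any piecewise continuous $g$ one has
\[
\text{Disc}(\L g) \;\subseteq\; T(\text{Disc}(g)) \;\cup\; \{T(c^\pm) : c \text{ is a singularity}\},
\]
because jumps of $g$ are transported through preimage branches, and new jumps are born at images of singularities of $T$ where a preimage branch appears or disappears. Applying this to the fixed-point equation $h=\L h$ and iterating, every jump of $h$ lies in the forward orbit of the singular set; by the matching condition this forward orbit equals $PM\cup\bigcup_c\{T^n(m_c):n\ge0\}$, where $m_c:=T^{\kappa^-}(c^-)=T^{\kappa^+}(c^+)$ is the matching point.

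The heart of the argument is to show $h$ has no jump at any matching point $m_c$: once this is established, the transport rule precludes jumps on the forward $T$-orbit of $m_c$ as well, yielding $\text{Disc}(h)\subseteq PM$ and hence $h$ piecewise constant on the prematching partition. The two ``source'' jumps injected into $\L h$ at $T(c^\pm)$ have signed sizes $\pm h(c^\pm)/|T'(c^\pm)|$; transported forward along the $\pm$-orbits by $\L$, they accumulate a factor $1/|T'|$ at each step and arrive at $m_c$ with magnitudes $h(c^\pm)/|(T^{\kappa^\pm})'(c^\pm)|$. By the derivative matching part of \eqref{eq:match} these magnitudes agree; an orientation analysis, comparing how the two one-sided neighbourhoods of $c$ fold around after $\kappa^-$ and $\kappa^+$ iterates, shows that the two contributions enter the jump equation for $h$ at $m_c$ with opposite sign, so they cancel.

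The main obstacle is the sign bookkeeping in this cancellation step: the equal-magnitude conclusion follows immediately from the derivative matching, but establishing the opposite-sign statement requires tracking the local orientation of each affine branch of $T$ along $\kappa^-$ and $\kappa^+$ iterates, which may include orientation-reversing pieces. Once this sign analysis is carried out, the conclusion of the theorem is immediate.
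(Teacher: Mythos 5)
Your transfer-operator strategy is a genuinely different route from the paper's, but as written it has a gap at exactly the step you call the heart of the argument. The two contributions arriving at the matching point $m_c$ have magnitudes $h(c^+)/|(T^{\kappa^+})'(c^+)|$ and $h(c^-)/|(T^{\kappa^-})'(c^-)|$; the derivative matching in \eqref{eq:match} equalizes the denominators only, while the numerators are the one-sided limits of the unknown density at the singularity $c$ itself. These agree only if $h$ is continuous at $c$ --- and that is part of what the theorem asserts, since $c$ typically does not lie in $PM$, so the conclusion forces $h$ to be constant across $c$ (see Example~\ref{ex2}: there $h(\gamma^-)=h(\gamma^+)=2/(5-2\theta)$ is an output of the theorem, not an obvious input). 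So the claimed cancellation is circular as stated. You could try to break the circle by first proving that jumps of $h$ occur only on the forward orbit of the critical values and then observing that $c$ off that orbit gives $h(c^-)=h(c^+)$ for free; but that leaves uncovered precisely the configurations in which some singularity lies on the (pre- or post-matching) orbit of a singularity, where the theorem still makes a claim and where your jump bookkeeping becomes a coupled linear system rather than a termwise cancellation. A secondary gap: the localization ``iterate $h=\L h$, hence all jumps lie on the forward orbit of the singular set'' does not follow from the set inclusion alone, since iterating gives $\mathrm{Disc}(h)\subseteq T^n(\mathrm{Disc}(h))\cup\bigcup_{j\le n}T^j(\{c^\pm\})$ and the first set need not shrink; one needs the quantitative version (a jump at a point outside the forward critical orbit is a sum of transported jumps weighted by $1/|(T^n)'|\le C\lambda^{-n}$, hence bounded by $C\lambda^{-n}\mathrm{Var}(h)\to 0$). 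That is fixable, but it must be said, and it again leans on the bounded-variation bound.

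For comparison, the paper's proof avoids jump calculus, orientation bookkeeping and the circularity altogether: using density of preperiodic points (Lemma~\ref{lem:dense_preperiodic}) one chooses, around any point off the prematching set, a nice interval $J$ with $\orb(\partial J)\cap J=\emptyset$ and $J\cap PM=\emptyset$, and shows that the matching condition forces every branch of the first return map $R$ to $J$ to be affine and onto $J$ (a non-onto branch would have to end either at a point of $\orb(\partial J)$, excluded by niceness, or at a post-critical point, where matching glues the two adjacent branches with equal slope). Hence $R$ preserves Lebesgue measure, and since $\mu|_J$ is also $R$-invariant, $h$ is constant on $J$. If you want to salvage your approach, the cleanest repair is probably not pointwise cancellation but an invariance argument: show that $\L$ preserves a suitable finite-dimensional space of functions adapted to the partition by the singularities and $PM$ (this is in effect what the paper's matrix $A$ in Section~\ref{sec:2} encodes), which replaces the sign analysis you defer and removes the need to know $h(c^\pm)$ in advance.
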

The proof of Theorem~\ref{thm:density}will be given in Section~\ref{sec:2}.

\begin{remark}
By piecewise affine we mean that there is a {\bf finite} partition
on which $T$ is affine on each partition element.
For {\bf countably} piecewise affine expanding maps, the existence
of an acip is not guaranteed (cf.\ \cite{LY}), yet in many cases 
(\eg with a finite image partition) the existence of an  acip can still be proven.
If $T$ is not piecewise affine, but just piecewise smooth (such as
is the case in the $\alpha$-continued fractions, see \cite{N}), 
then the same argument implies that the density $h$ is smooth on each 
element of the matching partition.
The $\alpha$-continued fractions have a dense set of (pre)periodic points
(a property used in our proof), as a consequence of density of full cylinders.
\end{remark}

\subsection{Matching and monotonicity of entropy}
Let us now consider the family $Q_\gamma$ for some fixed $s>1$. It can be shown that for each $\gamma \in \R$ the map $Q_\gamma$ admits a unique (hence ergodic) probability measure $\mu_\gamma$ that is absolutely continuous with respect to Lebesgue (for the existence of this {\em acip}, see Lemma~\ref{lem:acip}). Therefore it is natural to study the {\em metric entropy function} $\gamma \mapsto h(\gamma):=h_{\mu_\gamma}(Q_\gamma)$, and it turns out that matching property leads to monotonicity of the  entropy function $\gamma \mapsto h(\gamma)$.

In this framework the matching condition is just
$Q_\gamma^{\kappa^-}(\gamma^-) = Q_\gamma^{\kappa^+}(\gamma^+)$ with coinciding 
one-sided derivatives (for suitable positive integers $\kappa^\pm$); the prematching set is
$$
PM_\gamma:=  \bigcup_{j=1}^{\kappa^- -1} T^j(\gamma^-) \cup \bigcup_{j=1}^{\kappa^+ -1} T^j(\gamma^+).
$$

The set of $\gamma$ for which $Q_\gamma$ satisfies the matching condition \eqref{eq:match} is possibly empty: indeed a necessary condition for \eqref{eq:match} to hold
is that $s$ must be an algebraic integer (see Theorem~\ref{thm:algebraic}). However, as soon as the slope $s$ is compatible with the phenomenon of matching, a mild additional condition guarantees that the matching is stable:

\begin{proposition}\label{prop:stablematching}
If $\gamma_0\notin PM_{\gamma_0}$ is such that
$Q_{\gamma_0}$ satisfies the matching condition \eqref{eq:match},
then for all $\gamma$ in an open neighborhood of $\gamma_0$, the map $Q_\gamma$ satisfies the 
same matching condition as $Q_{\gamma_0}$.
\end{proposition}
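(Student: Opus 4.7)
The plan is to exploit the special structure of the family $(Q_\gamma)$: the two affine branches $x\mapsto x+1$ and $x\mapsto 1+s(1-x)$ themselves do not depend on $\gamma$; only the location of the jump does. Thus the orbit of $\gamma^\pm$ depends on $\gamma$ only through the initial value $\gamma$ and through the choice of branch at each step, \ie the itinerary. I would proceed in two steps: first, show that the itineraries of $\gamma^-$ and $\gamma^+$ up to their matching times are locally constant in $\gamma$; second, use the affine structure together with the derivative matching to promote the value matching at $\gamma_0$ into an identity of affine functions of $\gamma$ in a whole neighborhood.

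For the first step, since $\gamma_0\notin PM_{\gamma_0}$, the finitely many points of $PM_{\gamma_0}$ all lie at some positive distance $\delta$ from $\gamma_0$. An inductive argument on $j$, using continuity of $\gamma\mapsto Q_\gamma^j(\gamma^\pm)$ while the itinerary is frozen, shows that for $\gamma$ sufficiently close to $\gamma_0$ and for every $j=1,\dots,\kappa^\pm-1$, the iterate $Q_\gamma^j(\gamma^\pm)$ lies strictly on the same side of $\gamma$ as $Q_{\gamma_0}^j(\gamma_0^\pm)$ lies of $\gamma_0$. Hence the branch applied at step $j+1$ is unchanged, so the itinerary is preserved, and consequently $(Q_\gamma^{\kappa^\pm})'(\gamma^\pm)=(Q_{\gamma_0}^{\kappa^\pm})'(\gamma_0^\pm)$ throughout the neighborhood.

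For the second step, with itineraries frozen, composing affine branches gives
\[
A^\pm(\gamma) := Q_\gamma^{\kappa^\pm}(\gamma^\pm) = a^\pm + (-s)^{n^\pm}\gamma,
\]
where $n^\pm$ denotes the number of right-branch visits in the itinerary of $\gamma_0^\pm$; the only $\gamma$-dependence comes through the initial point, and the chain rule contributes a factor $-s$ at each right-branch step and $+1$ at each left-branch step. The derivative matching at $\gamma_0$ reads $(-s)^{n^-}=(-s)^{n^+}$, which, since $s>1$, forces $n^-=n^+$ and hence makes $A^-$ and $A^+$ share the same slope. Combined with the value matching $A^-(\gamma_0)=A^+(\gamma_0)$, the two affine functions coincide identically on the whole neighborhood, giving $Q_\gamma^{\kappa^-}(\gamma^-)=Q_\gamma^{\kappa^+}(\gamma^+)$ there. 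The only subtle point is the itinerary-stability step, where the hypothesis $\gamma_0\notin PM_{\gamma_0}$ is essential: it is precisely what prevents an intermediate iterate from sitting on the discontinuity, where arbitrarily small perturbations of $\gamma$ could flip the branch and destroy the combinatorial structure.
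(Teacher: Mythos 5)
Your proof is correct and follows essentially the same route as the paper: the hypothesis $\gamma_0\notin PM_{\gamma_0}$ freezes the branch combinatorics near $\gamma_0$, and the matching of value and one-sided derivatives forces the two relevant affine expressions to coincide identically, so the matching persists with the same exponents. Your write-up merely makes explicit (via the itinerary-stability induction and the affine-in-$\gamma$ formulas $a^\pm+(-s)^{n^\pm}\gamma$) the details the paper compresses into ``the branches have exactly the same affine form and this form does not change.''
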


\begin{proof}
The branches $Q_{\gamma_0}^{\kappa^{-}}$ and $Q_{\gamma_0}^{\kappa^{+}}$ on 
either side of the discontinuity have exactly the same affine form, and since
$\gamma_0\notin PM_{\gamma_0}$,
this form will not change as $\gamma$ moves in a small neighborhood of $\gamma_0$.
\end{proof}

\begin{definition}\label{def:matching-int}
We call {\em matching set} the set of parameter values which satisfy the hypotheses of 
Proposition~\ref{prop:stablematching}. The matching set is open,  its complement is denoted by $\EE$ 
and it is called {\em bifurcation set}.
The connected components of the matching set  will be called {\em matching intervals}. 

All points belonging to the same matching interval $J$ satisfy the same matching condition, 
in particular the matching exponents $\kappa^\pm$ are the same for all $\gamma \in J$; 
the difference $\Delta := \kappa^+-\kappa^-$
is called the {\em matching index} of the matching interval $J$. 
If on some matching interval $J$ we have that $\Delta = 0$, then $J$ is called a {\em neutral matching interval}.
\end{definition}

\begin{remark}\label{rem:stable-match} 
It is also interesting to note that when $\gamma$ approaches a point 
$\gamma_*$ belonging to the boundary of a matching interval: the condition that $\gamma \notin PM_\gamma$
fails for $Q_{\gamma_*}$. However, the prematching partition turns into a Markov partition 
when $\gamma={\gamma_*}$.
\end{remark}

\quad\begin{minipage}[h]{0.95\textwidth}{\footnotesize
\begin{example}\label{ex1}
The following examples show that, for $s\geq 2$ integer, matching occurs
on some intervals:
\\
(1) $\gamma < 0$. 
$$
\begin{cases}
\gamma \mapm 1+\gamma \mapp 1-s\gamma \mapp 1+s^2\gamma, \\
\gamma \mapp 1+s-s\gamma \mapp 1-s^2+s^2\gamma \underbrace{\mapm \dots \mapm}_{s^2\ \text{\tiny times}} 1+s^2\gamma,
\end{cases}
$$
with metric entropy $h_\mu(Q_\gamma) = \frac{4\log s}{s^2+3-2\gamma(s^2-1)}$,
see Proposition~\ref{prop:mono}.
\\
(2) $\gamma > \frac{s}{s+1}$.
$$
\begin{cases}
\gamma \mapm 1+\gamma \mapp 1-s\gamma \underbrace{\mapm \dots \mapm}_{s\ \text{\tiny times}} 1+s-s\gamma, \\
\gamma \mapp 1+s-s\gamma,
\end{cases}
$$
with $h_\mu(Q_\gamma) = \frac{2\log s}{2\gamma(s+1)-s+1}$,
see Proposition~\ref{prop:mono}.
\\
(3) $\gamma \in (\frac{s-1}{s+1}, \frac{s}{s+1})$. (Neutral matching)
$$
\begin{cases}
\gamma \mapm 1+\gamma \mapp 1-s\gamma \underbrace{\mapm \dots \mapm}_{s-1\ \text{\tiny times}} s-s\gamma \mapp 1+s-s^2+s^2\gamma, \\
\gamma \mapp 1+s-s\gamma \mapp 1-s+s^2\gamma 
\underbrace{\mapm \dots \mapm}_{s\ \text{\tiny times}}  1+s-s^2+s^2\gamma.
\end{cases}
$$
with  $h_{top}(Q_\gamma) = \log(\lambda_*)$ where $\lambda_*$ is the leading root of 
$p(x)=x^s-(x^{s-1}+ x^{s-2}+...+x+1)$ (so $h_{top}(Q_\gamma) = \log \frac{1+\sqrt{5}}{2}$
if $s=2$), see Example~\ref{ex:M} below.
The metric entropy $h_\mu(Q_\gamma) = \frac2{s+1}\log s$,
see Proposition~\ref{prop:mono} combined with the fact that $\frac{s}{s+1}$ is 
the right endpoint of the top plateau $M_s = [\frac{s}{s+1}-\frac{1}{s},\frac{s}{s+1}]$,
see Theorem~\ref{thm:top}.
\\
(4) $\gamma \in (\frac{s-2s^2+s^3}{1+s^3},\ \frac{s-1}{s+1})
= (\frac{s^2-s}{s^2-s+1}\frac{s-1}{s+1},\ \frac{s-1}{s+1})$.  (Neutral matching)
$$
\begin{cases}
\gamma \mapm 1+\gamma \mapp 1-s\gamma \underbrace{\mapm \dots \mapm}_{s-2\ \text{\tiny times}} 1+2s-s^2+s^2\gamma 
\mapp 1-2s^2+s^3-s^3\gamma %\\\qquad 
\underbrace{\mapm \dots \mapm}_{s\ \text{\tiny times}} 1+s-2s^2+s^3-s^3\gamma, \\
\gamma \mapp 1+s-s\gamma \mapp 1-s^2+s^2\gamma 
\underbrace{\mapm \dots \mapm}_{2s-1\ \text{\tiny times}}
2s-s^2 + s^2\gamma \mapm 1+s-2s^2+s^3-s^3\gamma.
\end{cases}
$$
The topological and metric entropies are as in the previous case, because
$(\frac{s-2s^2+s^3}{1+s^3},\ \frac{s-1}{s+1}) \subset M_s$.
\end{example}}
\end{minipage}

One can note that the metric as well as the topological
entropy of $Q_\gamma$ is monotone on regions where matching takes place
 (and constant in the case of neutral matching), see 
Figures~\ref{fig:entropies}. This is indeed the general case, as the next theorem shows
(see Section~\ref{sec:3} for its proof).

We prove this in the following theorem:

\begin{theorem}\label{thm:monotone}
If $\gamma$ is in a matching interval\footnote{and here we really need $\gamma \notin PM(\gamma)$} with matching index 
$\Delta = \kappa^+ - \kappa^-$, then the entropies
$$
h_\mu(Q_\gamma) 
\text{ and } h_{top}(Q_\gamma) 
\text{ are }
\begin{cases}
\text{ increasing} & \text{  if } \Delta > 0; \\
\text{ \ \ constant} & \text{  if } \Delta = 0; \\
\text{ decreasing} & \text{  if } \Delta < 0, 
\end{cases}
$$
as function of $\gamma$.
\end{theorem}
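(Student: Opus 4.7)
The plan is to apply Rokhlin's formula and use the piecewise-constant density (Theorem~\ref{thm:density}) combined with the combinatorial rigidity of a matching interval (Proposition~\ref{prop:stablematching}) to reduce the monotonicity of $h_\mu$ to a M\"obius-type variation whose sign is governed by $\Delta$; the topological entropy statement will follow by a parallel argument on the induced Markov structure.

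Since $|Q_\gamma'|\equiv 1$ on $(-\infty,\gamma]$ and $|Q_\gamma'|\equiv s$ on $(\gamma,\infty)$, Rokhlin's formula yields
\[
h_\mu(Q_\gamma) \;=\; \int\log|Q_\gamma'|\,d\mu_\gamma \;=\; \log s\cdot p(\gamma),\qquad p(\gamma):=\mu_\gamma\bigl((\gamma,\infty)\bigr).
\]
By Theorem~\ref{thm:density}, the density of $\mu_\gamma$ is constant on each atom of the prematching partition $\PP_\gamma$; by Proposition~\ref{prop:stablematching}, the combinatorial type of $\PP_\gamma$ is constant on the matching interval $J$, while its endpoints---images of $\gamma^\pm$ under finite compositions of the two affine branches of $Q_\gamma$---are affine functions of $\gamma$. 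The invariance equations for the transfer operator, together with the normalisation $\int h_\gamma\,dx=1$, assemble into a linear system whose coefficient structure is $\gamma$-independent on $J$, yielding $p(\gamma)$ as an explicit rational (affine over affine) function of $\gamma$ on $J$.

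The sign of $p'(\gamma)$ is cleanest to extract via the natural extension $\hat Q_\gamma:\Omega_\gamma\to\Omega_\gamma$, whose domain $\Omega_\gamma$ is a finite union of rectangles precisely because of matching; two-dimensional Lebesgue measure on $\Omega_\gamma$ is $\hat Q_\gamma$-invariant and projects to $\mu_\gamma$, so $p(\gamma)$ reads as a Lebesgue ratio on $\Omega_\gamma$. As $\gamma$ moves in $J$ by $d\gamma$, the $\kappa^-$ rectangles associated to the forward orbit of $\gamma^-$ and the $\kappa^+$ rectangles associated to $\gamma^+$ shift in opposite directions; bookkeeping the resulting area changes yields a variation formula in which the net ``strip imbalance'' $\kappa^+-\kappa^-=\Delta$ appears explicitly, giving $\operatorname{sgn}(p'(\gamma))=\operatorname{sgn}(\Delta)$. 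For the topological entropy, matching likewise yields a finite Markov extension of $Q_\gamma$ whose transition graph has fixed combinatorial type on $J$ but whose weights deform with $\gamma$ in the same rectangle-shift pattern; the analogous perturbation argument (or a reduction via the variational principle relating $h_{top}$ to the measure of maximal entropy on the extension) gives $\operatorname{sgn}(h_{top}'(\gamma))=\operatorname{sgn}(\Delta)$.

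The main obstacle is the detailed construction and analysis of the natural extension $\Omega_\gamma$: one must verify that the matching condition indeed closes $\Omega_\gamma$ into a finite rectangle union, identify which rectangles deform with $\gamma$ and with what orientation, and derive the area-balance formula producing the $\operatorname{sgn}(\Delta)$ identity. The hypothesis $\gamma\notin PM_\gamma$ emphasised in the footnote of the theorem is essential here, keeping the orbit points of $\gamma^\pm$ in general position and preventing the rectangle structure from degenerating as $\gamma$ varies in $J$.
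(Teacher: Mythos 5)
Your reduction via Rokhlin's formula to the sign of $p'(\gamma)$, where $p(\gamma)=\mu_\gamma((\gamma,\infty))$, is consistent with what is true (cf.\ Corollary~\ref{cor:entropysmooth}, which shows $1/h_\mu$ is affine on matching intervals), but the step that actually carries the theorem --- the claim that a natural extension $\Omega_\gamma$ closes up into a finite union of rectangles and that an area-balance computation yields $\operatorname{sgn}(p'(\gamma))=\operatorname{sgn}(\Delta)$ --- is asserted rather than proved, and you yourself flag it as ``the main obstacle''. Nothing in the proposal constructs $\Omega_\gamma$, identifies which rectangles move and in which direction, or derives the claimed variation formula; so the sign of the derivative, which is the entire content of the statement, is never established. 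The paper avoids this machinery altogether: it takes a nice interval $J$ disjoint from the prematching set, passes to the first return map $R$, and observes that, because of matching, the two pieces of the branch through $\gamma$ glue into a single affine onto branch, so $R$ (hence $h_m(R)$, with $m$ Lebesgue measure, which $R$ preserves) does not change at all as $\gamma$ moves within the branch domain $J_0$; only the return time redistributes, the part of $J_0$ with return time $\kappa^++N$ shrinking and the part with return time $\kappa^-+N$ growing as $\gamma$ increases. Abramov's formula $h_\mu(Q_\gamma)=h_m(R)/\int_J \tau\, dm$ then gives the sign statement directly, with a short separate argument when $\gamma$ is not recurrent.

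The topological entropy half has the same problem in a stronger form: ``weights deforming with $\gamma$'' on a Markov extension do not determine $h_{top}$, no mechanism is offered linking the deformation to the sign of $\Delta$, and $h_{top}$ is not known to be differentiable on $J$, so arguing about $h_{top}'(\gamma)$ is not even well posed. The paper proves this half (in the integer-slope case, Proposition~\ref{P:monotone-htop}) by counting periodic orbits: Corollary~\ref{C:periodic} shows that when the parameter crosses a periodic point inside a matching interval the orbit survives but its period drops by exactly $\Delta$, so the exponential growth rate of $n$-periodic points, i.e.\ $h_{top}$, moves monotonically with the sign of $\Delta$. Some concrete argument of this kind is needed in place of the perturbation sketch; as written, your proposal identifies the right quantities to control but leaves the controlling estimates unproven.
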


As shown in Remark~\ref{rem:smooth}, the metric entropy is smooth on matching intervals, 
and when it is increasing or decreasing then it is also  strictly increasing or strictly decreasing.
For the topological entropy we conjecture the same is true, but our proof does not go that far.
Cosper \& Misiurewicz used a different method for
piecewise affine maps similar to $Q_\gamma$ to prove that entropy
is locally constant at many parameter values \cite{CM}.

An interesting fact, to which we come back later, is that the entropy appears to stay constant
on the parameter intervals which are covered (up to a zero measure set) by a countable union of neutral matching intervals. For instance,
 if the slope $s$ is integer, the entropy is constant
on the parameter interval $M_s := [\frac{s}{s+1}-\frac{1}{s},\frac{s}{s+1}]$ (called the top plateau,
see Theorem~\ref{thm:top}), even though the
intersection of $M_s$  with the bifurcation set $\EE$ has positive Hausdorff dimension.

\subsection{Occurrence of matching in the family $(Q_\gamma)_\gamma$.}
We have seen that matching has interesting consequences on the dynamics of a system.
It is not hard  to see (numerically) that various algebraic values
of the slope $s$ frequently lead to matching.
This suggest the following natural (but intriguing) question:
\begin{quote}
For which values of the slope $s>1$ does matching occur in the 
family \eqref{eq:map}, and what portion of parameter space is covered by matching intervals?
\end{quote}

We can give the following necessary condition:
\begin{theorem}\label{thm:algebraic}
Let $s>1$ be a fixed slope, and assume that $Q_\gamma$ satisfies the matching condition. Then $s$ is an algebraic integer (\ie there is a {\bf monic} polynomial $p\in\Z[x]$ such  that $p(s)=0$).
In particular matching cannot hold for  any non-integer rational value $s$. 
\end{theorem}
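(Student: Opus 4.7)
The plan is to show that the matching condition at a parameter $(s,\gamma)$ produces a monic polynomial over $\Z$ having $s$ as a root. Each branch of $Q_\gamma$ is affine with slope either $1$ (left branch, which I will abbreviate by $L$) or $-s$ (right branch $R$), so along any itinerary of length $\kappa$ the iterate $Q_\gamma^\kappa$ is an affine map $x\mapsto (-s)^k x + B(s)$, where $k$ counts the $R$-applications and $B(s)\in\Z[s]$. Comparing the one-sided derivatives in \eqref{eq:match} therefore forces $k^-=k^+=:k$, after which the value equality reduces (by cancelling the common term $(-s)^k\gamma$) to
\[
B^-(s)=B^+(s),\qquad B^\pm\in\Z[s].
\]

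To extract the right structure from $B^\pm$, I would make the substitution $\tilde b:=b-1$, which converts the two recursions $b\mapsto b+1$ ($L$) and $b\mapsto 1+s-sb$ ($R$) into the much simpler rules
\[
L:\ \tilde b\mapsto \tilde b+1,\qquad R:\ \tilde b\mapsto -s\,\tilde b.
\]
Writing an itinerary as an alternation $L^{n_0}R\,L^{n_1}R\cdots R\,L^{n_k}$ and solving the linear recurrence produces the closed form
\[
\tilde B^\pm(s)\;=\;(-s)^k\bigl(n_0^\pm-1\bigr)+\sum_{i=1}^{k}(-s)^{k-i}\,n_i^\pm.
\]

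The crucial asymmetry driving the proof comes from the two initial $L$-block lengths. Since the first symbol of the $\gamma^+$-itinerary is $R$ one has $n_0^+=0$, so $\tilde B^+(s)$ has degree exactly $k$ with leading coefficient $(-1)^{k+1}$. For the $\gamma^-$-itinerary, on the other hand, $Q_\gamma(\gamma^-)=\gamma+1>\gamma$ forces the second symbol to be $R$; hence $n_0^-=1$ and the $s^k$-term in $\tilde B^-$ vanishes, so $\deg \tilde B^-\le k-1$. Consequently
\[
P(X):=\tilde B^-(X)-\tilde B^+(X)\in\Z[X]
\]
has degree exactly $k\ge 1$ and leading coefficient $(-1)^k=\pm1$. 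The matching condition reads $P(s)=0$, so $\pm P$ is a monic integer polynomial annihilating $s$, proving $s$ is an algebraic integer.

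The ``in particular'' clause follows immediately from the rational root theorem: any rational algebraic integer is an ordinary integer, so a non-integer rational value of $s$ cannot satisfy matching. The step I expect to require the most care is the combinatorial identification $n_0^-=1$: this is the forced-$R$ phenomenon that makes $\tilde B^-$ strictly miss the $s^k$-term and thereby upgrades the integer-coefficient relation $P(s)=0$ to a monic one; everything else is a routine unrolling of the linear recurrence.
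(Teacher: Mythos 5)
Your proof is correct and follows essentially the same route as the paper: derivative matching equalizes the number of slope-$(-s)$ applications on the two sides, and the difference of the resulting constant terms is an integer polynomial in $s$ with leading coefficient $\pm 1$, precisely because the upper orbit starts with the expanding branch while the lower orbit performs exactly one translation before its first expanding step. The paper packages the identical computation via the first return map $R$ to $[\gamma,+\infty)$, tracking $d_j=R^j(\gamma+1)-R^j(\gamma^+)$ with $d_0=1$ and $d_{j+1}=-sd_j+k_j$, $k_j\in\Z$, so that $d_m$ is monic of degree $m$ and vanishes at matching --- your closed form for $\tilde B^\pm$ together with $n_0^+=0$, $n_0^-=1$ is the unrolled version of exactly this recursion.
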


\begin{proof}
Let $R$ denote the first return map to $[\gamma, +\infty)$ and let $d_j:=R^j(\gamma+1)-R^j(\gamma^+)$; it is easy to check that
$$ 
d_0=1, \ \ \ d_{j+1}=-sd_j+k_j \ \ \mbox{with } k_j\in \Z. %%sign kj= sign wj
$$
By induction $d_j=p_j(s)$ where $p_j\in\Z[x]$ is some monic polynomial.
If matching holds $R^m(\gamma^+)=R^m(\gamma+1)$ for some $m \in \N$
where the matching of derivatives ensures that $m$ is the same in the left and 
right hand side of this equality.
Hence $p_m(s)=0$, which means $s$ is an algebraic integer. Moreover if $s$ is rational 
then (by the Eisenstein criterion) it must be an integer.
\end{proof}

Even if we are not able to give a complete characterization of the slopes for which matching holds, we can prove 
that for integer values of $s$ matching occurs, and is prevalent (see 
Section~\ref{sec:prevalence}).

We can provide numerical evidence that matching holds and is prevalent also for other algebraic values such as the golden number $\phi:=(\sqrt{5}+1)/2$ or the plastic constant (which is the real root of $x^3=x+1$).
We will also provide some example of quadratic surd for which matching occurs but is not prevalent (see the final section).

\subsection{Prevalence of matching}\label{sec:prevalence}

In order to understand the structure of the bifurcation set $\EE$ let us note that:
\begin{enumerate}
\item For $\gamma \leq 0$, the first return map (relative to  $Q_\gamma$) to the 
interval $[\gamma, \gamma+1]$  is conjugate to $y \mapsto s^2 y + \gamma(s^2-1)\pmod 1$.
\item For $\gamma \geq s/(s+1)$ the first return map (relative to  $Q_\gamma$) to the 
interval $[\gamma, \gamma+1]$  is conjugate to $y \mapsto -s y + \gamma(s+1) \pmod 1$.
\item If $2 \le s \in \N$, then matching holds for all
$\gamma \in (-\infty, 0) \cup (s/(s+1),\infty)$ (see also Example~\ref{ex1} (1) and (2) above).
\end{enumerate}

\begin{remark}\label{rem:btrans}This means that   for general values of $s$ the structure of 
the bifurcation set outside the closed interval 
$[0,s/(s+1)]$ displays a periodic structure, and its structure inside each period can be studied 
analyzing the problem of matching for a family of generalized $\beta$-transformation with 
negative or positive slope. For the case of positive slope, such a study  has been carried over in 
\cite{BCK}, and from the results contained there it follows  that the intersection 
$\EE \cap (-\infty,0)$ has measure zero for various algebraic values of $s$, including $\phi:=(\sqrt{5}+1)/2$, 
and also all other quadratic irrational of Pisot type.
\end{remark}

For integer values of $s$, the bifurcation set $\EE$ is contained in the interval $[0,s/(s+1)]$. 
Moreover in this case we can prove that matching is typical.

\begin{theorem}\label{thm:typical}
For $2 \le s \in \N$, the bifurcation set $\EE\subset [0,s/(s+1)]$ has zero Lebesgue measure
and Hausdorff dimension $HD(\EE) = 1$, although $HD(\EE \setminus [0,\delta)) < 1$ for all $\delta > 0$.
\end{theorem}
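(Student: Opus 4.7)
The plan is to exploit the renormalization observation in Remark~\ref{rem:btrans}: inside $[0, s/(s+1)]$ the first return map of $Q_\gamma$ to an appropriate subinterval is affinely conjugate to a shifted $\beta$-transformation, $y \mapsto s^2 y + (s^2-1)\gamma \pmod 1$ near $\gamma = 0^+$ and $y \mapsto -sy + (s+1)\gamma \pmod 1$ near $\gamma = s/(s+1)^-$. In both cases the matching condition for $Q_\gamma$ translates precisely to the matching condition for the return family, so $\EE$ inherits a self-similar structure: $[0, s/(s+1)]$ decomposes into countably many renormalization cells, each either lying inside a matching interval or bi-Lipschitz equivalent to a smaller copy of the whole picture.

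For the zero Lebesgue measure claim, I would appeal to \cite{BCK}, which proves that the bifurcation set of a shifted $\beta$-transformation family with integer slope has measure zero. Because the renormalization is piecewise affine (hence bi-Lipschitz on each cell) and preserves null sets, and because $s^2\in\N$ whenever $s\in\N$, iterating the renormalization and summing over the countable family of cells yields $|\EE|=0$. For $HD(\EE\setminus[0,\delta))<1$, the key point is that only finitely many renormalization levels can reach any fixed $\delta>0$; on this finite attractor the symbolic dynamics is uniformly hyperbolic on a subshift with finite alphabet, so Bowen's formula produces a Hausdorff dimension strictly below $1$.

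The equality $HD(\EE)=1$ must therefore be realized by accumulation at $\gamma=0$. As the renormalization depth $n$ grows, the number of admissible symbolic blocks grows like $s^{2n}$ while the contraction factor in parameter space is exactly $s^{-2n}$; by selecting, at depth $n$, a sub-alphabet of $N_n$ words that encode non-matching parameters, one constructs self-similar Cantor subsets of $\EE$ whose Hausdorff dimension is at least $(\log N_n)/(2n\log s)$. Pushing $N_n$ close to $s^{2n}$ forces the dimension arbitrarily close to $1$. The main technical obstacle is precisely this last step: verifying that the fraction of non-matching words at depth $n$ approaches $1$. This requires a careful combinatorial analysis of the matching criterion at the symbolic level, determining which symbolic words generated by the renormalization force a matching cascade and which do not, and is where the substitution tools announced in the introduction should play a decisive role.
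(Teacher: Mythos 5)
Your argument has a genuine gap at its very foundation. The conjugacies you quote from Remark~\ref{rem:btrans} hold for $\gamma \le 0$ and for $\gamma \ge s/(s+1)$, i.e.\ \emph{outside} $[0,s/(s+1)]$ --- precisely the region where, for integer $s$, matching always holds and $\EE$ has no part at all. Inside $(0,s/(s+1))$, where the whole bifurcation set lives, the first return map to $[0,1)$ is not a shifted $\beta$-transformation: it equals $g(x)=s(1-x) \bmod 1$ on $(0,\gamma)$ and $g^2$ on $(\gamma,1)$ (Lemma~\ref{L:firstreturn}), and the parameter interval does \emph{not} decompose into countably many bi-Lipschitz ``renormalization cells'' each equivalent to a scaled copy of the whole picture; the self-similarity that does exist (the tuning windows of Section~\ref{sec:tuning}) is symbolic, modeled on the continued-fraction bifurcation set, and plays no role in this theorem. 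Consequently the appeal to \cite{BCK} for the zero-measure claim has nothing to act on: that reference is invoked in the paper only for the part of parameter space outside $[0,s/(s+1)]$ (and for non-integer slopes). You never produce a criterion deciding matching for $\gamma\in(0,s/(s+1))$, and that criterion is the crux. The paper's proof supplies it in Proposition~\ref{P:HB}: $\gamma\in\EE$ if and only if $g^k(\gamma)\ge\gamma$ for all $k$, where $g$ is a single Lebesgue-measure-preserving, ergodic, expanding map independent of $\gamma$. Zero measure is then immediate, since for a.e.\ $\gamma$ the $g$-orbit of $\gamma$ gets arbitrarily close to $0$ and in particular enters $[0,\gamma)$.

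For the dimension statements your plan is incomplete by your own admission, and the step you single out is misstated: the ``fraction of non-matching words'' cannot tend to $1$ --- matching is prevalent, so that fraction tends to $0$. What is actually needed is that the exponential growth rate of $g$-itineraries avoiding a small interval $[0,t)$ stays close to $\log s$, combined with the observation (again from Proposition~\ref{P:HB}) that any $\gamma\in[0,t]$ whose $g$-orbit avoids $[0,t)$ automatically satisfies $g^k(\gamma)\ge t\ge\gamma$ and hence lies in $\EE$. This is the paper's route: with $K(t)=\{x: g^k(x)\ge t\ \forall k\}$ one has $\EE\supset K(t)\cap[0,t]$ and $HD(K(t))\to 1$ as $t\to 0$ (points avoiding the leftmost cylinder of $g^m$ have dimension at least $\frac{\log(s^m-1)}{\log s^m}$), which yields $HD(\EE)=1$ by accumulation at $\gamma=0$, exactly the mechanism you guessed but could not implement without the symbolic criterion. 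For the last claim, the inclusion $\EE\setminus[0,\delta)\subset K(\delta)$ exhibits this set as the survivor set of the single expanding map $g$ avoiding the fixed open interval $[0,\delta)$, which has Hausdorff dimension strictly less than $1$; your Bowen-formula idea is in the right spirit, but as presented it rests on the nonexistent affine cell structure rather than on this inclusion.
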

The proof of Theorem~\ref{thm:typical} will be given in Section~\ref{ssec:typical}.

The same kind of result seems to be true for other values of the slope $s$ (such as $s=(\sqrt{5}+1)/2$ 
and other Pisot numbers) but we are still missing a rigorous proof  of prevalence in these cases. 

\subsection{Pseudocenters}
In what follows we restrict to the integer case, namely $2 \le s \in \N$; indeed in this setting one can provide a
detailed explicit description of the fractal structure of the bifurcation set.

Let us call the components of $[0, \frac{s}{s+1}] \setminus \EE$
the {\em matching intervals}.
We will see that every matching interval contains a unique
$s$-adic rational of lowest denominator. We call this point the
{\em pseudocenter}. Knowing the pseudocenter $\xi$ and its
{\bf even} $s$-adic expansion $\xi = .w$, we can reconstruct
the matching interval:

\begin{theorem}\label{T:pc}
Let $\xi$ be the pseudocenter of a matching interval $V$,
with shortest {\bf even}
$s$-adic expansion $\xi = .w$.
Then the boundary points $\partial V = \{ \xi_L, \xi_R\}$
can be obtained from $\xi$:
$$
\begin{cases}
\xi_L := .\overline{\check v v }, \\
\xi_R := .\overline{w},
\end{cases}
$$
where $v$ is the shortest {\bf odd} $s$-adic expansion of
$1-\xi$, and $\check v$ is the bit-wise negation of $v$
(\ie $\check a = s-1-a$ for $a \in \{ 0, \dots, s-1\}$).
Furthermore, $\check w < v$ and $\check v < w$.
\end{theorem}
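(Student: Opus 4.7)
My plan is to use symbolic dynamics: I would encode the orbits of $\gamma^+$ and $\gamma^-$ under $Q_\gamma$ as $s$-adic digit sequences, by grouping consecutive applications of the translation branch $x\mapsto x+1$ between successive applications of the flip branch $x\mapsto 1+s(1-x)$. The matching condition then becomes a compatibility relation between the two symbolic sequences, and when $\gamma=.w$ is an $s$-adic rational (the pseudocenter case) these codings are determined explicitly by the digits of $w$.

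Inside the matching interval $V$, Proposition~\ref{prop:stablematching} ensures that the iterates $Q_\gamma^{j}(\gamma^{\pm})$ for $j<\kappa^{\pm}$ are affine functions $A_j+B_j\gamma$ of $\gamma$ (with $A_j,B_j\in\Z[s]$), and the itineraries remain fixed throughout $V$. The boundary of $V$ is reached exactly when one of these iterates collides with $\gamma$ itself; at that moment the prematching partition turns Markov (Remark~\ref{rem:stable-match}). For the right endpoint I would show that the collision occurs precisely when the orbit of $\gamma^+$ becomes purely periodic with digit block $\overline{w}$: the periodicity equation $A+B\gamma=\gamma$ then forces $\gamma=.\overline{w}$, hence $\xi_R=.\overline{w}$. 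The evenness of $|w|$ is forced by the sign of $B$: since the flip branch is orientation-reversing, an even number of its applications is required to produce the orientation-preserving return with matching one-sided derivatives (i.e.\ with $B>0$).

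For the left endpoint a symmetric argument applies to the orbit of $\gamma^-$, but the complement $1-\xi$ appears because the flip branch has the explicit form $1+s(1-x)$: one application of $Q_\gamma^+$ sends $\gamma$ to $1+s(1-\gamma)$, turning the $s$-adic coordinate $\gamma$ into $1-\gamma$. Consequently the itinerary on the $-$ side of matching reads out $\check{v}$, the bit-wise negation of the digits of $1-\xi$. The odd length of $v$ compensates for one extra orientation reversal so that the concatenated block $\check{v}\,v$ has even total length, consistent with the derivative-matching requirement. Solving the analogous periodicity equation at the left boundary then yields $\xi_L=.\overline{\check{v}\,v}$.

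The lexicographic inequalities $\check{w}<v$ and $\check{v}<w$ encode the minimality of $\xi$'s denominator inside $V=(\xi_L,\xi_R)$: compared digit by digit they are equivalent to $\xi_L<\xi<\xi_R$ together with the assertion that no $s$-adic rational with shorter expansion lies in $V$. I expect the main obstacle to be Step~1: rigorously setting up the correspondence between matching-orbit itineraries and $s$-adic digits, and pinning down why the even expansion of $\xi$ naturally pairs with the odd expansion of $1-\xi$. Once this correspondence is in place, the identifications of $\xi_R$ and $\xi_L$ reduce to solving explicit affine fixed-point equations, and the lexicographic inequalities follow by direct digit comparison.
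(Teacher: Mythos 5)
Your overall strategy (code the orbits of $\gamma^\pm$ through the first return to $[0,1)$ by $s$-adic digits, exploit the affine dependence of the prematching iterates on $\gamma$, and locate the endpoints of $V$ as ``collision'' parameters) is in the same spirit as the paper, which works with the return map $g(x)=s(1-x)\bmod 1$ and the criterion $\gamma\in\EE \iff g^k(\gamma)\ge\gamma\ \forall k$ (Proposition~\ref{P:HB}). But the proposal leaves the actual content of the theorem unproved. The collision argument only produces a finite list of affine equations $A_j+B_j\gamma=\gamma$ that the endpoints must satisfy; the assertion that the relevant collision on the right is ``precisely when the orbit of $\gamma^+$ becomes purely periodic with block $\overline{w}$'' (and $\overline{\check v v}$ on the left) is exactly what has to be shown, and nothing in the sketch determines \emph{which} collision occurs first, nor that matching genuinely holds on all of $(.\overline{\check v v},.\overline{w})$, nor that it fails at the two endpoints so that this interval is the whole component $V$ rather than a proper subinterval of it. In the paper this is the content of Lemma~\ref{L:ixi} (monotonicity of $g^{|w|}$ and $g^{|v|}$ on the two halves of $I_\xi$ forces $g^k(\gamma)<\gamma$ there) and of Proposition~\ref{P:maximal} (the inequality $g^k(x)\ge x$ on $[c,d]$ up to the continuity time $h_0$, plus uniqueness of the minimal-denominator $s$-adic rational, forces $\xi_L,\xi_R\in\EE$). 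You acknowledge the itinerary--digit correspondence as the ``main obstacle,'' but even granting it, these maximality/minimality steps are missing.

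A concrete symptom: your argument never uses the defining property of the pseudocenter (minimal denominator in $V$, equivalently $g^k(\xi)\notin(0,\xi)$ for all $k$, Corollary~\ref{C:characterization}), yet the stated formula is \emph{false} for a non-pseudocenter $s$-adic rational in $V$. The paper's own example ($s=2$, $\xi=5/16=.0101$) has $\xi_L=.\overline{01000110110}\notin\EE$ and $I_{5/16}\subsetneq I_{1/4}=V$, so any proof that locates the left endpoint must invoke minimality of $\xi$ somewhere; a collision computation run from an arbitrary rational in $V$ would give the wrong left endpoint. You instead attribute the role of minimality to the inequalities $\check w<v$ and $\check v<w$, but those hold for \emph{every} $s$-adic rational by the explicit digit relation between the even expansion of $\xi$ and the odd expansion of $1-\xi$ (Lemma~\ref{L:reloaded}); they do not encode, and are not equivalent to, the minimal-denominator property. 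So the minimality hypothesis is invoked where it is not needed and omitted where it is essential, which is a genuine gap rather than a presentational one.
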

The proof of Theorem~\ref{T:pc} is obtained combining Lemma~\ref{L:ixi} and Proposition~\ref{P:maximal} of Section~\ref{sec:pseudo}.
According to Theorem~\ref{thm:monotone}, the
matching index $\Delta = \Delta_\xi$ determines how the entropy
depends on the parameter $\gamma \in V$. On the other hand, the following Theorem (proved in Section~\ref{sec:mi}) shows that the 
matching index $\Delta_\xi$ can be obtained from the $s$-expansion of $\xi$ using a simple recipe:

\begin{theorem}\label{thm:matching_index}
For a word $w \in \{ 0, 1, \dots, s-1\}^*$, define
$$
\| w \| = \sum_{j=0}^{s-1} (s-1-2j ) |w|_j
\quad \text{ for } |w|_j = \#\{ i \in \{ 1, \dots, |w|\} : w_i = j\}.
$$
Every $s$-adic pseudocenter $\xi$ with even expansion $\xi = .w$,
has matching index 
$$
\Delta_\xi = \frac{s+1}{2} \| w \|,
$$
which is always multiple of $s+1$.
\end{theorem}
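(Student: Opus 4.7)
The plan is to reduce Theorem~\ref{thm:matching_index} to an algebraic identity about the $L$-run profiles of the two matching orbits, and then to identify the resulting integer with $\|w\|/2$ using the digit structure from Theorem~\ref{T:pc}.

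Writing $L(x)=x+1$ (slope $1$) and $U(x)=1+s(1-x)$ (slope $-s$), the derivative-matching condition in Definition~\ref{def:matchingg} gives $(-s)^{u^-}=(-s)^{u^+}$, which (for $s\ge 2$) forces $u^-=u^+=:r$; therefore $\Delta_\xi = \kappa^+-\kappa^- = \ell^+-\ell^-$, the difference of lower-branch counts. To exploit the matching of constant terms, introduce the shifted variable $\psi := B-1$ on the affine form $Q_\gamma^k(x) = Ax+B$; under this conjugation $L$ acts by $\psi\mapsto\psi+1$, $U$ by $\psi\mapsto -s\psi$, and $\psi_0=-1$. A direct induction on an orbit coded as $L^{a_0}UL^{a_1}U\cdots UL^{a_r}$ yields
\[
\psi_\kappa = -(-s)^r + \sum_{m=0}^{r} a_m(-s)^{r-m}.
\]
Setting $c_m := a_m^+-a_m^-$ (with $a_0^+=0$ and $a_0^-\ge 1$, since the $\gamma^+$-orbit begins with $U$ and the $\gamma^-$-orbit with $L$), the condition $\psi^+=\psi^-$ becomes $\sum_m c_m(-s)^{r-m}=0$. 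Hence $p(t):=\sum_m c_m t^{r-m}\in\Z[t]$ has $-s$ as a root; writing $p(t)=(t+s)q(t)$ with $q\in\Z[t]$ and evaluating at $t=1$ gives
\[
\Delta_\xi = p(1) = (s+1)\,q(1),
\]
which already settles the final assertion that $\Delta_\xi$ is a multiple of $s+1$.

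It remains to identify $q(1)=\Delta_\xi/(s+1)$ with $\|w\|/2$. Here I would combine Theorem~\ref{T:pc} with its supporting Lemma~\ref{L:ixi} and Proposition~\ref{P:maximal}: the boundary identifications $\xi_R=.\overline{w}$ and $\xi_L=.\overline{\check{v}v}$, together with the Markov partition inherited at these parameters (Remark~\ref{rem:stable-match}), should prescribe the matching orbits of $\gamma^\pm$ digit-by-digit. Concretely, I expect the integer sequence $(c_m)_{m=0}^r$ to split into $n=|w|$ groups, one per digit of $w$, and the group attached to $w_i=j$ to contribute exactly $\tfrac{s+1}{2}(s-1-2j)$ to $\Delta_\xi=\sum_m c_m$. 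Summing over the digits then yields
\[
\Delta_\xi = \frac{s+1}{2}\sum_{i=1}^{n}(s-1-2w_i) = \frac{s+1}{2}\sum_{j=0}^{s-1}(s-1-2j)|w|_j = \frac{s+1}{2}\|w\|.
\]
The main obstacle is making this block decomposition precise, i.e.\ showing that the orbits of $\gamma^\pm$ at a pseudocenter really factor digit-by-digit according to $w$. I expect to proceed by induction on the hierarchical structure of pseudocenters: the substitutions that build pseudocenter words from shorter ones must transform the factorisation $p(t)=(t+s)q(t)$ and the weight $\|w\|$ compatibly, so that the identity $q(1)=\|w\|/2$ is preserved, with the short cases of Example~\ref{ex1} supplying the base.
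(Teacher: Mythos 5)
Your first half is sound: the conjugated-orbit bookkeeping, the conclusion $u^+=u^-$ from derivative matching, and the factorisation $p(t)=(t+s)q(t)$ giving $\Delta_\xi=p(1)=(s+1)q(1)$ is a clean argument for the divisibility statement (the paper only gets divisibility as a by-product of the explicit formula). But the heart of the theorem, the identity $q(1)=\tfrac12\|w\|$, is exactly what you have not proved, and the route you sketch for it has a genuine gap. First, the asserted block decomposition --- that the run-length differences $(c_m)$ split into $|w|$ groups with the group of the digit $w_i=j$ contributing $\tfrac{s+1}{2}(s-1-2j)$ --- is not how the two orbits are organised: the upper orbit of $\xi^+$ is governed by the digit pairs of $w$, while the lower orbit of $\xi^-$ is governed by the digit pairs of $v$, and by Lemma~\ref{L:reloaded} the digits of $v$ are the complements of those of $w$ shifted by one position (with an adjustment at the end depending on whether the last digit of $w$ vanishes). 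So each digit $\e_k$ of $w$ enters the two orbits in misaligned groupings and with different weights ($s$ in one return block, $1$ in another), and the per-digit contribution $\tfrac{s+1}{2}(s-1-2\e_k)$ only emerges after summing and cancelling across both orbits; it is not a local statement you can read off one digit at a time. Second, the proposed induction ``on the hierarchical structure of pseudocenters'' has nothing to induct on: the only substitution mechanism available ($\chi$ of the period-doubling lemma) produces from any pseudocenter only words $w'$ with $\|w'\|=0$, so it cannot generate pseudocenters of nonzero index from the base cases of Example~\ref{ex1}, and no complete generating system for $\QMAX$ is established anywhere that would make such an induction well founded.

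For comparison, the paper's proof avoids any induction: it uses the first return map to $[0,1)$ (Lemma~\ref{L:firstreturn} and Remark~\ref{rem:ab}), which gives the exact number of $Q_\gamma$-iterates spent in each return block, namely $s^2-s\e_{2i-1}-\e_{2i}+1$ for an upper block reading the digit pair $\e_{2i-1}\e_{2i}$ and $\e_1+2$ for the initial lower step. Since $\xi$ is $s$-adic, both one-sided orbits terminate at the fixed point $1$, so $\kappa^+$ is the sum of these block times along the digits of $w$ and $\kappa^-$ the corresponding sum along the digits of $v$ (two cases according to $\e_{2m}=0$ or not), and the difference telescopes directly to $(s+1)\bigl[(s-1)m-\sum_k\e_k\bigr]=\tfrac{s+1}{2}\|w\|$. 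If you want to salvage your approach, the missing ingredient is precisely this computation of the $a_m^\pm$ from the digits of $w$ and $v$; once you have it, your polynomial identity adds nothing beyond repackaging the subtraction.
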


As a consequence of this description, we will prove that on the left of each matching interval $V$ there  
exists a period doubling cascade of adjacent neutral matching
intervals (see Proposition~\ref{P:period-doubling}).

\subsection{Plateaux and self-similarities}
As mentioned before there are intervals in parameter
spaces, called {\em plateaux}, where the entropy is constant.
In the previous section we saw that, for $2\leq s \in \N$, there is neutral 
matching in every period doubling cascade, but plateaux always represent 
more than a single periodic doubling  cascade. In fact we can give an explicit description of the highest plateau
in the integer slope case:

\begin{theorem}\label{thm:top}
For fixed integer slope $s \ge 2$, the family $Q_\gamma$ has {\em top plateau}
\begin{equation}\label{eq:M}
M_s := [\frac{s}{s+1}-\frac1s, \frac{s}{s+1}] 
\end{equation}
where both the metric and the topological  entropy are constant.
This plateau consists of more than a single period doubling cascade: in fact the intersection
$M_s \cap \EE$ has positive Hausdorff dimension.
\end{theorem}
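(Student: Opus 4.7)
The argument splits into two parts: proving that both entropies are constant on $M_s = [\tfrac{s}{s+1}-\tfrac1s, \tfrac{s}{s+1}]$, and showing $HD(M_s \cap \EE) > 0$.

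For the constancy, I would iterate the construction visible in Example~\ref{ex1}(3)--(4), which already provides two adjacent neutral matching intervals inside $M_s$ carrying the common metric entropy $\tfrac{2}{s+1}\log s$ and topological entropy $\log \lambda_*$. The plan is to apply the substitution/renormalization implicit in Theorem~\ref{T:pc} repeatedly: each step produces a new neutral matching interval adjacent to the previous cascade on its left, with matching index zero and hence the same entropy by Theorem~\ref{thm:monotone}. At shared boundaries the prematching partition becomes Markov (Remark~\ref{rem:stable-match}), ensuring that the local entropy values truly agree across intervals. After showing these neutral matching intervals cover $M_s$ up to a Lebesgue-null set, continuity of $\gamma\mapsto h_\mu(Q_\gamma)$---which follows from Theorem~\ref{thm:density} via continuous dependence of the piecewise-constant invariant density on $\gamma$---together with lower semicontinuity of $h_{top}$ for piecewise expanding maps, forces both entropies to be constant throughout $M_s$.

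For the Hausdorff dimension claim, the idea is to build a self-similar Cantor set $K \subset M_s \cap \EE$ using the substitution structure. By Theorem~\ref{T:pc}, matching intervals in $M_s$ are parameterized by pseudocenters whose $s$-adic expansions satisfy the maximality conditions $\check v < w$ and $\check w < v$. The renormalization acts on these expansions by prepending a fixed admissible word, and parameters whose expansions never stabilize to a matching word lie in the bifurcation set. Choosing at least two substitution branches with comparable contraction ratios (bounded below by a power of $1/s$), one obtains an IFS satisfying the open set condition whose attractor $K$ lies in $M_s \cap \EE$; the standard lower bound $HD(K) \geq \log N/\log(1/r)$ then yields positive Hausdorff dimension.

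The main obstacle is the quantitative part: controlling contraction ratios and lengths of matching intervals at each generation carefully enough to apply the dimension formula, and verifying the open set condition for the induced IFS. Setting up the cascade so that it genuinely covers $M_s$ modulo a measure zero set is also delicate, requiring a careful combinatorial analysis of which pseudocenter words arise in the substitution orbit and ruling out large uncovered gaps inside the plateau.
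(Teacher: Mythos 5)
Your plan for the constancy part has a genuine gap at its central step. Knowing that the neutral matching intervals cover $M_s$ up to a Lebesgue-null set, that the entropy is constant on each of them, and that $\gamma\mapsto h_\mu(Q_\gamma)$ is continuous does \emph{not} force the entropy to be constant on $M_s$: a continuous function that is locally constant off a measure-zero Cantor set can still be a devil's staircase (think of the Cantor function), and since the theorem itself asserts $HD(M_s\cap\EE)>0$, the complement of your open cover is an uncountable set on which the constants could a priori drift. Ruling out exactly this staircase behaviour is the whole difficulty, and it is what the paper's Theorem~\ref{thm:monotone3} is for. For the metric entropy the paper uses the Keller--Liverani stability result to get that $h_\mu$ is $\eta$-H\"older for every $\eta<1$, combines this with $HD(M_s\cap\EE)<1$ (which holds because $M_s$ is bounded away from $0$, cf.\ Theorem~\ref{thm:typical}) to conclude $HD\bigl(h(M_s\cap\EE)\bigr)<1$, and then notes that neutrality of all matching intervals in $M_s$ makes $h(M_s)=h(M_s\cap\EE)$ an interval of Hausdorff dimension $<1$, hence a point. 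For the topological entropy no continuity/semicontinuity shortcut is used either: the paper counts periodic orbits, showing via Corollary~\ref{C:periodic}, Lemma~\ref{L:pee} and the truncated map $\tilde Q_\gamma$ with Lemma~\ref{lem:moststable} (uniqueness of the measure of maximal entropy, Hofbauer) that the orbits whose periods could change have exponential growth rate strictly below $h_{top}$. None of these ideas appear in your proposal, and without them your final step fails. A further problem is that the period-doubling cascade you iterate from Example~\ref{ex1}(3)--(4) accumulates only at a single point, so it does not cover $M_s$ up to measure zero; to know that \emph{every} matching interval in $M_s$ is neutral you need the tuning-window combinatorics (Theorem~\ref{Thm:Komega} and Proposition~\ref{P:tuning-index}, using that $M_s$ is the tuning window of $\xi=1-\tfrac1s=.b0$ with $\|b0\|=0$). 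Also, continuity of $h_\mu$ across the bifurcation set does not follow from Theorem~\ref{thm:density} (which only concerns parameters with matching); the paper needs \cite{KL} for that.

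Your second part, building a self-similar Cantor subset of $M_s\cap\EE$ by an IFS with the open set condition, is reasonable in spirit, but as stated it is incomplete: membership in $\EE$ is the condition $g^k(\gamma)\ge\gamma$ for all $k$ (equivalently, ALO-minimality of the exponent sequence inside the tuning window), not the vague requirement that expansions ``never stabilize to a matching word'', so you must verify that all infinite concatenations produced by your two substitution branches actually satisfy this shift-minimality constraint. The paper instead reads the positive dimension off the symbolic characterization of $\EE\cap[\xi_T,\xi_R]$ as admissible concatenations of the blocks $w,\check w,v,\check v$ (Theorem~\ref{Thm:Komega}) together with the order-preserving correspondence with the bifurcation set of Nakada's $\alpha$-continued fractions (Remark~\ref{rem:CT3}); your IFS route could be made to work, but the admissibility check is precisely the point that needs care.
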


For instance for the particular family corresponding to the slope $s=2$ we get that both 
metric and topological entropy 
of $Q_\gamma$ are constant for $\gamma \in [1/6, 2/3]$; this implies that 
 the entropy of the maps $G_\beta$  is constant for $\beta \in [2,5]$, thus proving a conjecture 
 which was stated in \cite{BORG}.

As a matter of fact Theorem~\ref{thm:top} is just a particular case of Theorem~\ref{thm:monotone3} which shows that the entropy is constant on several other intervals: in Section~\ref{sec:tuning} we shall give a detailed description of the plateaux. In order to do this we are led to study some self-similar features of the graph of the entropy function which have also other interesting consequences.
\medskip

\section{Piecewise constant density}\label{sec:2}

In this section, we prove Theorem~\ref{thm:density}, and then give some
general discussion of how to find the invariant density,
based on the fact that the prematching partition plays the role of a Markov partition.

We start with a simple lemma.

\begin{lemma}\label{lem:dense_preperiodic}
%%% 
Let  $T:\circle \to \circle $ be a piecewise linear, eventually expanding map with a 
finite number of discontinuities. Then the preperiodic points of $T$ are dense.
% in its nonwandering set.
\end{lemma}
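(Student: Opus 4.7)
The plan is to prove that every open interval $U \subset \circle$ contains a point which is periodic for some iterate of $T$, since such a point is automatically preperiodic for $T$.

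As a preliminary reduction, I would replace $T$ by a high iterate $T^N$ so that $|(T^N)'| \ge \lambda > 1$ on each monotonicity piece; since preperiodic points of $T^N$ are preperiodic for $T$, this is harmless. Let $\mathcal{P}$ denote the monotonicity partition of $T$, and let $\mathcal{P}_n := \bigvee_{j=0}^{n-1} T^{-j}\mathcal{P}$ be the refined partition into branches of $T^n$. Uniform expansion yields $\mathrm{diam}\,\mathcal{P}_n \le C\lambda^{-n} \to 0$, so for $n$ large enough some piece $P \in \mathcal{P}_n$ is contained in $U$, and $T^n$ is an affine homeomorphism of $P$ onto the interval $J := T^n(P)$ with slope of absolute value $\ge \lambda^n$.

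The crux of the argument is to construct a sub-piece $P' \in \mathcal{P}_{n+m}$ with $P' \subset P$ and $T^{n+m}(P') \supset P'$: the inverse $(T^{n+m}|_{P'})^{-1}$ is then an affine contraction of ratio at most $\lambda^{-(n+m)}$ taking $P'$ into itself, so by Banach's principle it has a unique fixed point in $P'$, which is a periodic point of $T^{n+m}$ lying in $U$. To construct $P'$ I would iterate the image $J$: expansion forces the sum of the lengths of the connected components of $T^k(J)$ to grow at rate $\ge \lambda^k |J|$ while $\circle$ has finite circumference, so a pigeonhole argument produces some $k$ for which a connected component of $T^k(J)$ fully contains a monotonicity piece $Q \in \mathcal{P}$. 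Pulling back along the corresponding inverse branch yields a subinterval $P_1 \subset P$ with $T^{n+k}(P_1) = Q$, and iterating the same step starting from $Q$ finitely many more times produces $P'$ with $T^{n+m}(P') \supset P \supset P'$.

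The main obstacle is the chaining step: one must ensure that forward iteration of the full branch $Q$ eventually covers the original piece $P$ rather than remaining in a proper forward-invariant sub-arc of $\circle$. This is a topological transitivity statement, which holds on the support of the absolutely continuous invariant measure (whose existence for piecewise affine eventually expanding maps follows from Lasota--Yorke), and that support is a finite union of intervals permuted and mixed by $T$. If $U$ is not contained in such a support, one exploits that, by ergodicity of the acip, the forward orbit of $U$ meets the support in finitely many steps, and a preperiodic point in $U$ is recovered by pulling back a periodic point from inside the support along the appropriate inverse branch.
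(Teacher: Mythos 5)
Your covering--growth step (some iterate of any nondegenerate interval contains a full monotonicity piece) is sound in spirit and is exactly the mechanism the paper uses, but the way you close the argument has genuine gaps. First, the primary plan --- producing $P'\subset P\subset U$ with $T^{n+m}(P')\supset P'$, i.e.\ a periodic point \emph{inside} $U$ --- proves more than the lemma and is false in general: a piecewise affine eventually expanding circle map need not be surjective. For instance, take four branches of slope $2$ on $[0,\tfrac14)$, $[\tfrac14,\tfrac12)$, $[\tfrac12,\tfrac34)$, $[\tfrac34,1)$, each mapped onto $[\tfrac12,1)$; every periodic point of every iterate lies in $T(\circle)=[\tfrac12,1)$, so no open $U\subset(0,\tfrac12)$ contains one, although preperiodic points are dense there. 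You sense this in your last paragraph, but the repair does not close the gap: ``ergodicity of the acip'' concerns $\mu$-a.e.\ points of the support and says nothing about forward orbits of open sets outside it (what you would need is the Lasota--Yorke/spectral structure: Lebesgue-a.e.\ point is generic for one of finitely many acips); topological transitivity on the support only gives $T^m(Q)\cap P\neq\emptyset$, whereas your fixed-point construction needs the covering $T^m(Q)\supset P$; and the final pull-back presupposes that periodic points are dense in the support, a statement of essentially the same nature and depth as the lemma itself, nowhere established. A smaller slip: the ``sum of the lengths of the connected components of $T^k(J)$'' is just the measure of $T^k(J)\subset\circle$, bounded by $1$; it does not grow like $\lambda^k|J|$ because images of distinct components may overlap. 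The standard fix tracks the longest component: a component with two discontinuities already contains a full monotonicity piece, and otherwise each component is cut into at most two pieces, so with $\lambda>2$ (pass to a further iterate, as the paper does with $|S'|>3$) the longest component grows by a factor $\geq\lambda/2$ per step.

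The missing idea, which makes the paper's proof short and free of any transitivity or acip input, is that you never need to return to $U$: a preimage of a preperiodic point is preperiodic. The paper fixes $\eps>0$ and the finite family $F$ of one-sided $\eps$-neighborhoods of the discontinuities; the growth argument shows that each element of $F$, under iteration, eventually covers a discontinuity and hence a whole element of $F$; chaining these coverings and using the finiteness of $F$ gives a covering cycle, hence a periodic point of some iterate inside one element of $F$. Since any open interval $U$ likewise eventually covers an element of $F$, it contains a preimage of that periodic point, i.e.\ a preperiodic point. If you want to keep your branch-partition setup, replace the attempt to cover $P$ itself by this pigeonhole on $F$; as written, the chaining step either fails (periodic points in $U$) or rests on unproven structural facts.
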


\begin{proof}
Take $m\in \N$ such that $S:=T^m$ satisfies $|S'(x)|>3$ almost everywhere. 
Let $C$ be the set of discontinuities of $S$.
We shall prove that preperiodic points of  $S$ are dense, and this will imply our claim. 
Since every open interval will eventually be mapped onto a neighborhood of some $c\in C$, it is enough to prove that we can find preperiodic points arbitrarily close to any discontinuity. Indeed, let us fix $\epsilon>0$ such that the family of intervals
$F:=\{ (c-\epsilon, c), (c,c+\epsilon) \ : \ c\in C \}$ are all disjoint.
We iterate any one of these intervals; its size will grow geometrically until it eventually covers some $c \in C$, and it will cover a whole 
element of $F$. We repeat the argument iterating this new element, and since $F$ has finite cardinality, we get that the original element will map on another element which eventually maps onto itself. This leads to a preperiodic point.
\end{proof}

\begin{proof}[Proof of Theorem~\ref{thm:density}]
Since $T$ is eventually expanding, $T$ preserves an absolutely continuous
probability.
%, which is supported on the non-wandering set of $T$.

Let $\kappa^\pm_c$ be the matching indices of the discontinuity points of $T$,
so the prematching set is $\bigcup_c (\bigcup_{j=1}^{\kappa^-_c-1} T^j(c_-) \cup
\bigcup_{j=1}^{\kappa^+_c-1} T^j(c_+)$. Let $x \in \circle$ be any other point.
If $x$ is wandering, then $h$ is constantly zero near $x$, so
we can restrict our attention to the non-wandering set. The fact that $T$ is expanding
implies that there are preperiodic points on either side of $x$, so there is 
a open interval $J \owns x$, disjoint from the prematching set,
and {\em nice} in the sense that $\orb(\partial J) \cap J = \emptyset$.
By the previous lemma, there is an abundance of (pre)periodic points
to choose from for $\partial J$. 
Let $R$ be the first return map to $J$.
If $R:K \to J$ is some  branch of this return map, which is not
onto, then $\partial R(K)$ contains a point in $\orb(\partial J)$ (but this is impossible, 
because $J$ is nice), or a point $p$ in the post-critical set.
In the latter case, there is $0 < j < \tau(K)$ (where $\tau(K)$ is the first 
return time of $K$ to $J$) such that some critical point $c \in \partial T^j(K)$.
Since $J$ is disjoint from the prematching set, 
$\tau(K) - j \ge \max\{ \kappa^-_c, \kappa^+_c\}$.
Furthermore, there is an interval $K'$ adjacent to $K$ such that $T^j(K')$ 
and $T^j(K)$ have $c$ as common boundary point (say they lie to the left resp.\ right of $c$).
Due to the matching, $\overline{ T^{j+\kappa^-_c}(K') \cup T^{j+\kappa^+_c}(K) }$
contains a two-sided neighborhood of $p$ and since the matching is strong,
the derivative is constant on this entire interval.

It follows that $R$ has only affine, surjective branches, so 
$R$ preserves Lebesgue measure $m$. On the other hand, since $R$ is a first return map, $\mu|_J$ 
is also $R$-invariant, so $\mu|_J$ coincides with Lebesgue 
measure, up to a constant. This constant is in fact the density $\rho|_J$.
Since $x$ was arbitrary, it follows that $h$ is constant away from the prematching set.
\end{proof}

Analyzing the above proof we may note that actually it would be enough to require the matching 
condition only for those singularities which belong to the support of the invariant measure 
(in which case the prematching set should be redefined accordingly).

Let us consider any finite partition $\{P_i\}_{i=1}^N$ of $\circle$ which is finer than the partition 
generated by the singular points of $T$. Then $T$ is affine on each $P_i$, $|DT(x)|=t_i$ 
for all $x\in P_i$ and setting $\Pi_{i,j} = |T(P_i) \cap P_j|/|P_j|$ we get, 
$$|P_i|=\frac{1}{t_i} |TP_i|=\sum_{j=1}^N \frac{\Pi_{ij}}{t_i}|P_j| \ \ \ \ \ (1\leq i \leq N)$$
and thus $(|P_1|,...,|P_N|)^T$ is a right eigenvector (corresponding to eigenvalue 1) of the 
matrix $A=(A_{i,j}) = (\frac1{t_i} \Pi_{i,j})$.

\qquad\begin{minipage}[h]{0.9\textwidth}{\footnotesize 
\begin{example}
In the case $s=2$ and  $\gamma \in [-\frac13,0]$, we use the partition
$\{ [-3+4\gamma, -2+4\gamma], \ [-2+4\gamma, -1+4\gamma], \ [-1+4\gamma, 4\gamma],\
[4\gamma, \gamma], [\gamma, 1+\gamma], \ [1+\gamma, 1-2\gamma], \ [1-2\gamma, 3-2\gamma]\}$,
with transition matrix
$$
\Pi = \begin{pmatrix}
0 & 1 & 0 & 0 & 0 & 0 & 0 \\
0 & 0 & 1 & 0 & 0 & 0 & 0 \\
0 & 0 & 0 & 1 & \theta & 0 & 0 \\
0 & 0 & 0 & 0 & 1-\theta & 0  & 0 \\
0 & 0 & 0 & 0 & 0 & 0 & 1 \\
0 & 0 & 0 & 0 &  1-\theta & 1 & 0 \\
1 & 1 & 1 & 1 & \theta &  0 & 0
\end{pmatrix}
 \quad \text{ and } \quad
A = \begin{pmatrix}
0 & 1 & 0 & 0 & 0 & 0 & 0 \\
0 & 0 & 1 & 0 & 0 & 0 & 0 \\
0 & 0 & 0 & 1 & \theta & 0 & 0 \\
0 & 0 & 0 & 0 & 1-\theta & 0  & 0 \\
0 & 0 & 0 & 0 & 0 & 0 & \frac12 \\
0 & 0 & 0 & 0 &   \frac{1-\theta}{2} &  \frac12 & 0 \\
\frac12 & \frac12 & \frac12 & \frac12 & \frac{\theta}{2} &  0 & 0
\end{pmatrix},
$$
where $\theta = |T(P_3) \cap P_5|/|P_5| = 1+3\gamma$.
\end{example}}
\end{minipage}

If we make the ansatz that $T$ has an invariant measure $d\mu(x)=\rho(x) dx$ 
with density $\rho(x)=\sum r_i 1_{P_i}(x)$ (for indicator functions $1_P$), 
then the condition $\mu(P_j)=\mu(T^{-1}(P_j))$ implies that
\begin{equation}\label{eq:necessary}
r_j = \sum_i \frac{r_i}{t_i} \Pi_{i,j},  \qquad1\leq j \leq N.
\end{equation} 
This means that $(r_1,...,r_N) $ must be a left eigenvector (corresponding to eigenvalue $1$) 
of the matrix $A$ defined above. Since $1\in \sigma(A)=\sigma(A^T)$, equation \eqref{eq:necessary} 
always admits nontrivial solutions, and it is well known that if $\{P_i\}_{i=1}^N$ 
is a Markov partition for $T$ (\ie if $\Pi_{ij} \in \{0,1\}$ for all $i,j$) any such 
nontrivial (normalized) solution corresponds to an invariant (probability) measure for $T$.

Obviously the Markov property cannot hold in general.
However, if $T$ satisfies the hypotheses of Theorem~\ref{thm:density} and the 
partition $\{P_i\}_{i=1}^N$  is determined by the union of the singular set 
and prematching set then the existence of an invariant density is 
guaranteed by Theorem~\ref{thm:density} and equation
\eqref{eq:necessary} provides an effective way of computing the invariant density.

\begin{remark}\label{rem:smooth}
Even if $T$ is defined on $\R$ rather than on $\circle$, Theorem~\ref{thm:density} 
applies. Indeed, the support of the invariant measure is contained 
an invariant interval; identifying the endpoints of such an interval gives rise to a circle map to 
which Theorem~\ref{thm:density} applies.
If $T=Q_\gamma$ for some $\gamma$ varying in a matching interval and 
we consider the partition generated by the points $\{\gamma\}\cup PM_\gamma$,
then we see from \eqref{eq:necessary} that the elements of the matrix $A$  change 
smoothly as $\gamma$ varies in the matching interval, and so does also the unique normalized 
solution of equation \eqref{eq:necessary}. The entropy can be computed by 
the Rokhlin formula
$$
h(\gamma)=\int \log|DQ_\gamma(x)| d\mu_\gamma (x)= (\log s) \sum_{P_i \subset (\gamma, +\infty)} r_i |P_i|
$$
and thus it is smooth as well, see also Corollary~\ref{cor:entropysmooth}.
\end{remark}

\qquad\begin{minipage}[h]{0.9\textwidth}{\footnotesize 
\begin{example}\label{ex2}
In  the case $s=2$ and $\gamma \in [\frac23,1]$, we have the partition 
$\{ [1-2\gamma, 2-2\gamma]\ , \ [2-2\gamma, \gamma]\ , \ [\gamma,1+\gamma]\}$
and matrices
$$
\Pi = \begin{pmatrix}
0 & 1 & \theta \\
0 & 0 & 1-\theta \\
1 & 1 & \theta
\end{pmatrix}
\quad\text{ and }\quad 
A = \begin{pmatrix}
0 & 1 & \theta \\
0 & 0 & 1-\theta \\
\frac12 & \frac12 & \frac{\theta}{2},
\end{pmatrix}
$$
so in this case there are three intervals whose boundary
maps to the matching point (here $\theta = 3(1-\gamma)$).
The normalized left and right eigenvectors are, respectively
$$
\frac{1}{5-2\theta}
\begin{pmatrix}
1 & 2 & 2
\end{pmatrix}
\quad\text{ and }\quad
\begin{pmatrix}
1  \\
1-\theta \\
1
\end{pmatrix}
$$
Note that $\theta = 0$ as $\gamma = 1$,
and $\gamma = \frac23$ corresponds to $\theta = 1$.
The metric entropy of $\mu$ is
$h_\mu(Q_\gamma) = \frac{2\log 2}{6\gamma-1}$ by the Rokhlin formula.
\end{example}}
\end{minipage}

\section{Monotonicity of entropy}\label{sec:3}
From now on we shall focus on the family of maps $(Q_\gamma)_{\gamma\in \R}$ defined in the introduction. Note that
using this notation we omit the dependence on the slope: it is implicit that  different choices of the slope $s>1$ 
will give rise to  different families. We will show how the matching property  affects  
some dynamical invariants of these families, 
 such as topological or metric entropy.

Let us first note that (for every fixed value $s>1$ of the slope)
each $Q_\gamma$ admits a unique absolutely continuous
invariant probability.

\begin{lemma}\label{lem:acip}
The maps $Q_\gamma$ are Lebesgue ergodic; hence there is a unique acip $\mu_\gamma$.
\end{lemma}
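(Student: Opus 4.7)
The plan is to induce $Q_\gamma$ on a suitable subinterval, obtaining a uniformly expanding piecewise affine map with finitely many branches, apply a classical existence-and-ergodicity theorem, and then lift the result back to $Q_\gamma$. First I would show that $Q_\gamma$ admits a bounded forward-invariant interval $I_\gamma \subset \R$ which contains the support of every acip. This is elementary: the left branch merely shifts points to the right by $1$ while the right branch has slope $-s$ with $s>1$, so the forward orbit of the critical value $\gamma$ stays in a bounded set, and one can take $I_\gamma$ to be (a small enlargement of) the convex hull of $\{Q_\gamma^n(\gamma^\pm):n\ge 0\}$.

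Next, fix a subinterval $J \subset I_\gamma$ chosen so that each $Q_\gamma$-orbit segment returning from $J$ to $J$ contains exactly one application of the right (expanding) branch, the remaining steps being unit translations. A natural choice is $J := (\gamma,\gamma+1]\cap I_\gamma$. Boundedness of $I_\gamma$ forces the return time $\tau$ to be uniformly bounded, hence the first return map $R = Q_\gamma^\tau$ is piecewise affine with finitely many branches, each of constant slope $\pm s$, and so uniformly expanding. By the Lasota--Yorke / Rychlik theorem for piecewise monotone expanding maps, $R$ admits an absolutely continuous invariant probability measure $\nu$. The Kakutani tower construction over $J$ then produces an absolutely continuous $Q_\gamma$-invariant measure $\mu_\gamma$, which is finite because $\tau$ is bounded, hence normalizable to a probability.

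Lebesgue ergodicity follows from ergodicity (in fact exactness) of the induced map $R$: as a piecewise affine uniformly expanding map of a finite union of intervals with full or Markov-like branches, $R$ is exact with respect to $\nu$, and $\nu$ is equivalent to Lebesgue on $J$; this transfers through the tower to yield ergodicity of $Q_\gamma$ with respect to $\mu_\gamma$ and to Lebesgue measure on $I_\gamma$, hence uniqueness of the acip. The main obstacle I anticipate is verifying the covering or Markov property of $R$ needed for exactness, uniformly across all parameters $\gamma$. This has to be checked case by case according to the three regimes $\gamma \le 0$, $\gamma \in [0,s/(s+1)]$, and $\gamma \ge s/(s+1)$ highlighted in Remark~\ref{rem:btrans}. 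In the first and third regimes $R$ is conjugate to a (generalized) $\beta$-transformation, for which ergodicity is classical; in the middle regime a separate but analogous direct argument involving the explicit structure of the branches of $R$ is required.
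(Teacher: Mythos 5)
There is a genuine gap, and it sits exactly where your proposal defers the work. The induction scheme itself is fine (a bounded forward-invariant interval exists, every point enters $J=(\gamma,\gamma+1]$ in bounded time, the return time is uniformly bounded, and the return map $R$ is piecewise affine and uniformly expanding with finitely many branches — though note its branches have slopes $-s$ or $s^2$, not $\pm s$, since a return may use the expanding branch twice). Lasota--Yorke \cite{LY} then gives existence of an acip for $R$, hence for $Q_\gamma$. But the statement of the lemma is \emph{ergodicity and uniqueness}, and for that you assert that $R$ is exact because it has ``full or Markov-like branches''. For general $\gamma$ this is simply not the case: the two branches of $R$ adjacent to the discontinuity are truncated, and for $\gamma$ in the bifurcation set there is no Markov structure at all — indeed the whole point of the paper is that a Markov-type structure (the prematching partition) is only available when matching holds. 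The classical theorems you invoke give finitely many ergodic absolutely continuous components, not one; and your fallback for the middle regime $\gamma\in[0,s/(s+1)]$ — ``a separate but analogous direct argument is required'' — is precisely the content of the lemma, left unproved. Even in the outer regimes, ``ergodicity is classical for generalized $\beta$-transformations'' needs care when the slope ($s$ or $s^2$) lies in $(1,2)$, where linear mod $1$ maps can have several ergodic components.

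The paper closes this gap with a short but essential topological argument that your proposal has no analogue of: take an expanding iterate $S=Q_\gamma^m$, note it is an AFU map in the sense of \cite{Zw}, so by \cite[Lemma 4]{Zw} there are finitely many disjoint open $S$-invariant sets, each carrying one ergodic acip; then observe that every point has a neighborhood which, under iteration, is eventually mapped onto a neighborhood of the \emph{single} discontinuity $\gamma$ (the same expansion argument as in Lemma~\ref{lem:dense_preperiodic}: an interval grows until it covers the discontinuity). Hence any two such open invariant sets must intersect, so there is only one ergodic component, which gives uniqueness of the acip for all $\gamma$ simultaneously, with no case distinction and no Markov or covering property of an induced map. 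To repair your proof you would need to supply exactly this kind of ``every interval eventually covers a neighborhood of $\gamma$'' argument (or an equivalent transitivity statement) to upgrade ``finitely many ergodic acips for $R$'' to ``exactly one''; as written, the exactness claim for $R$ in the central parameter window is unsupported.
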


\begin{proof}
Let $m \in \N$ be such that $S:=Q_\gamma^m$ is expanding; 
in the terminology of \cite{Zw}, $S$ is an AFU map, and according
to  \cite[Lemma 4]{Zw} there is a finite number of disjoint $S$-invariant
 open sets, each of them supporting an ergodic absolutely continuous measure.
However, since every point has a neighborhood that will be eventually 
mapped onto a neighborhood of the single discontinuity $\gamma$, 
there can be just one ergodic component. 
\end{proof}

We shall now consider the map $\gamma \mapsto h_\mu(\gamma):=h(Q_\gamma,\mu_\gamma)$ which associates to every parameter $\gamma$ the metric entropy of $Q_\gamma$.  Classical  general results \cite{KL} ensure that this map is H\"older continuous (of any exponent $\eta<1$); and yet on intervals where matching holds the entropy function is much more regular, in fact it is analytic
(see Remark~\ref{rem:smooth} and Corollary \ref{cor:entropysmooth}).
  
But the most evident feature displayed by the metric entropy is the fact that it is monotone on each matching interval 
(and the kind of monotonic behavior is determined by sign of the matching index). Different matching intervals are kneaded in a complex way, so that in the end the global regularity of the entropy is no better than  H\"older continuous.

%An analogous monotonicity property holds for topological entropy as well.

\begin{proof}[Proof of Theorem~\ref{thm:monotone}]
Let $J$ be a nice neighborhood of $\gamma$ disjoint from the prematching set.
Let $R = Q_\gamma^\tau$ be the corresponding first return map; it preserves
normalized Lebesgue measure $m$. Let $J_0$ be the domain of $R$ 
containing $\gamma$.

\begin{remark}
The argument so far used that $\gamma \notin PM_\gamma$.
For example, if $\gamma = \frac{1}{s+1}$, then 
$$
\begin{cases}
\gamma^- \mapm 1+\frac{1}{s+1} \mapp \frac{1}{1+s} = \gamma^+,\\[1mm] 
\gamma^+ \mapp 1+\frac{s^2}{s+1} \mapp 2s-s^2+\frac{1}{s+1} \mapm \dots \mapm \frac{1}{1+s} = \gamma^+,
\end{cases}
$$
so the first return of $\gamma^+$ happens after the first return
of $\gamma^-$, and there is in fact no matching of derivatives
at any iterate. For this reason, we have to assume
that $\gamma$ is in a matching %(rather than just weak matching) 
interval.
\end{remark}

Assume first that there is $N \ge 0$ such that $Q_\gamma^N(q) \in J$
for the matching point $q = Q_\gamma^{\kappa^-}(\gamma^-) =
 Q_\gamma^{\kappa^+}(\gamma^+)$.
As $\gamma$ varies within $J_0$, the orbit of $\partial J$ doesn't
change, and hence $J$ and $R$ do not change either. Also $J_0$
doesn't change, except that the discontinuity point $\gamma$ moves 
within it, so as $\gamma$ increases, the proportion of $J_0$ with return
time $\kappa^++N$ decreases whereas
the proportion of $J_0$ with return time $\kappa^-+N$ increases. 
This means that $\int_J \tau\ dm$ decreases/increases/remains
unchanged
according to whether $\Delta > 0$,  $\Delta < 0$ or $\Delta = 0$. 
Therefore, using Abramov's formula, we obtain that
\begin{equation}\label{eq:abramov}
h_\mu(Q_\gamma) = \frac{1}{\int_J \tau\ dm} h_m(R)
\end{equation}
increases/decreases/remains unchanged accordingly. 

If no such $N$ can be found, \ie $\gamma$ is not recurrent,
then we repeat the argument with some other nice interval
$J$ containing $q$. In this case, there can be several branches $J_0$
that pass through $\gamma$ (and are matched again) before
returning to $J$, but the proportions of points in such $x \in J_0$
that use the $Q_\gamma^{\kappa^+}$-branch to return decreases
as $\gamma$ increases, regardless of the branch $J_0$.
Thus the same result holds.
\end{proof}

Looking carefully equation~\eqref{eq:abramov} we can realize that it has an interesting consequence:
\begin{corollary}\label{cor:entropysmooth}
The function $\gamma \mapsto 1/h_\mu(\gamma)$ is locally affine on every matching interval. 
Therefore, if $(a,b)$ is a matching interval, the metric entropy is given by
\begin{equation}\label{eq:interpolation}
%h_\mu(x)=\frac{h_\mu(b)h_\mu(a)(b-a)}{(b-x)h_\mu(b)+(x-a)h_\mu(a)} 
h(x)=\frac{h(b)h(a)(b-a)}{(b-x)h(b)+(x-a)h(a)}
\qquad \text{ for all } \gamma \in (a,b),
\end{equation}
where, for sake of readability, we abbreviated $h(\gamma) := h_\mu(\gamma)$.
\end{corollary}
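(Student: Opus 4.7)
The plan is to leverage the Abramov formula \eqref{eq:abramov} from the proof of Theorem~\ref{thm:monotone} and to track how each of its two factors depends on $\gamma$ inside a fixed matching interval. Since the Corollary asserts that $1/h_\mu(\gamma)$ is affine, the natural strategy is to show that the numerator $h_m(R)$ is constant while the denominator $\int_J \tau\, dm$ is affine in $\gamma$; the interpolation formula \eqref{eq:interpolation} will then be a mechanical consequence.

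First I would fix $\gamma_0$ in a matching interval, and pick the nice neighborhood $J$ of $\gamma_0$ constructed in the proof of Theorem~\ref{thm:monotone}, together with the first-return map $R=Q_\gamma^\tau:J\to J$ and its branch $J_0$ that is split by $\gamma$. As $\gamma$ moves within the matching interval, the endpoints of $J$ and $J_0$ and the slopes of all branches of $R$ are independent of $\gamma$: outside $J_0$ the branches have prescribed combinatorics that do not depend on $\gamma$, and on the two half-branches of $J_0$ the matching condition glues them into two affine maps of constant slopes $\pm s^{\kappa^-}$ and $\pm s^{\kappa^+}$, each surjective onto $J$. Consequently, all branch lengths $|R^{-1}(J) \cap (\text{branch})|$ are fixed, and by Rokhlin's formula applied to the Lebesgue-preserving piecewise affine map $R$,
$$
h_m(R) \;=\; \frac{1}{|J|}\sum_{\text{branches }K} |K|\, \log|R'|_K|
$$
is independent of $\gamma$.

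Next I would analyze $\int_J \tau\, dm$. On every branch of $R$ disjoint from $J_0$, both the branch and its return time are constant in $\gamma$. On $J_0$, the return time equals $\kappa^- + N$ on the sub-piece $J_0 \cap \{x < \gamma\}$ and $\kappa^+ + N$ on $J_0 \cap \{x > \gamma\}$ (in the recurrent case; the non-recurrent case is handled by the same device as in the proof of Theorem~\ref{thm:monotone}, using a nice interval around the matching point $q$, which produces finitely many branches passing through $\gamma$, but in each the split is still by the discontinuity and so the measure of each half depends linearly on $\gamma$). Summing, $\int_J \tau\, dm = \alpha + \beta \gamma$ for some constants $\alpha, \beta$ depending only on the matching interval. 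Dividing by the constant $h_m(R)$ shows that $1/h_\mu(\gamma)$ is affine on the matching interval.

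Finally, \eqref{eq:interpolation} follows purely algebraically: the unique affine function on $(a,b)$ taking the values $1/h(a)$ and $1/h(b)$ (interpreted as one-sided limits at the boundary of the matching interval) is
$$
\frac{1}{h(\gamma)} \;=\; \frac{(b-\gamma)/h(a) + (\gamma-a)/h(b)}{b-a},
$$
which rearranges to \eqref{eq:interpolation}. The main obstacle I anticipate is the bookkeeping that $h_m(R)$ is genuinely $\gamma$-independent: one must check carefully that the splitting of $J_0$ by the moving discontinuity $\gamma$ does not sneak a $\gamma$-dependent contribution into the Rokhlin sum, and that in the non-recurrent case the finitely many branches passing through $\gamma$ contribute an \emph{affine} (not merely piecewise affine) correction to $\int_J \tau\, dm$. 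Once these are verified, the Corollary reduces to affine interpolation.
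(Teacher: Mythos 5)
Your route is the same as the paper's: the corollary is obtained there precisely by re-reading Abramov's formula \eqref{eq:abramov} from the proof of Theorem~\ref{thm:monotone}, noting that $J$, $R$ and hence $h_m(R)$ do not move with $\gamma$ while $\int_J \tau\, dm$ varies affinely as the discontinuity sweeps through $J_0$, and then interpolating $1/h$ affinely between the endpoint values. One local misstatement should be corrected, because as written it would undercut your own key claim: you describe $R|_{J_0}$ as \emph{two} affine branches of slopes $\pm s^{\kappa^-}$ and $\pm s^{\kappa^+}$, each onto $J$. If that were literally the case, your Rokhlin expression for $h_m(R)$ would contain the terms coming from the two halves of $J_0$, whose lengths depend on $\gamma$, so $h_m(R)$ would not be $\gamma$-independent. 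The point of the matching condition \eqref{eq:match} is exactly that the one-sided derivatives agree, so after the common $N$ further steps the return map is a \emph{single} affine surjective branch on all of $J_0$, independent of $\gamma$; only the return time $\tau$ jumps at $\gamma$ (taking the values $\kappa^-+N$ and $\kappa^++N$ on the two halves). With that correction your computation of $h_m(R)$ and the affine dependence of $\int_J\tau\,dm$ on $\gamma$ go through, and the rest is the algebraic interpolation, exactly as in the paper.
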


Formula \eqref{eq:interpolation} is particularly interesting in practice: indeed, 
even if matching fails on the endpoints of a matching interval $(a,b)$, 
when $\gamma\in \{ a,b \}$ the map $Q_\gamma$ admits a Markov partition.
Hence both $h(a)$ and $h(b)$ can be computed in a standard way.

Theorem~\ref{thm:monotone} implies that the metric entropy is constant on every neutral matching interval, 
but it turns out that -in many cases- the intervals where the entropy is constant are 
clusters of countably many neutral matching intervals. We shall describe phenomenon later on, providing 
rigorous proofs in the case $2\leq s \in \N$. A peculiar feature of the families with integer slope is the 
presence of two unbounded matching intervals, where metric entropy can be explicitly computed:

\begin{proposition}\label{prop:mono}
Let $s \in \N$, $s\geq 2$.
The metric entropy of $Q_\gamma$ is 
$$
h_\mu(Q_\gamma) =\begin{cases} 
\frac{4\log s}{s^2+3-2\gamma(s^2-1)} \qquad & \text{ if } \gamma \leq 0,\\[1mm]
\frac{2\log s}{2\gamma(s+1)-s+1} & \text{ if } \gamma \geq s/(s+1).
\end{cases}
$$
\end{proposition}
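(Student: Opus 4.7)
The plan is to apply Abramov's formula to the first return map $R$ of $Q_\gamma$ to the interval $A := [\gamma, \gamma+1]$, exploiting observations (1) and (2) at the start of Section~\ref{sec:prevalence} which describe $R$ as a $\beta$-transformation-like piecewise affine map.

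For $\gamma \geq s/(s+1)$, the translation $z = x - \gamma$ conjugates $R$ to a map of the form $z \mapsto -sz + \gamma(s+1) \pmod{1}$ on $[0,1]$; this map has constant slope of absolute value $s$ and preserves Lebesgue measure, so $R$ preserves the normalized Lebesgue measure $m$ on $A$, and by Rokhlin's formula $h_m(R) = \log s$. Analogously, for $\gamma \leq 0$ the conjugate map is $z \mapsto s^2 z + \gamma(s^2 - 1) \pmod{1}$, so again $R$ preserves $m$ and $h_m(R) = 2\log s$. Next, I verify that $A$ is contained in a single cell of the prematching partition: for $\gamma \geq s/(s+1)$, the prematching points from Example~\ref{ex1}(2) are $\{1+\gamma\} \cup \{1 - s\gamma + j : 0 \leq j \leq s-1\}$, which all lie on the boundary or outside $(\gamma,\gamma+1)$, and the analogous check for $\gamma \leq 0$ uses Example~\ref{ex1}(1). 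By Theorem~\ref{thm:density} the density of $\mu_\gamma$ is then constant on $A$, so $\mu_\gamma|_A = \mu_\gamma(A)\, m$. Abramov's formula combined with Kac's lemma yields
$$
h_{\mu_\gamma}(Q_\gamma) \;=\; \mu_\gamma(A)\, h_m(R) \;=\; \frac{h_m(R)}{\bar\tau}, \qquad \bar\tau := \int_A \tau\, dm.
$$

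It remains to evaluate $\bar\tau$. For $\gamma \geq s/(s+1)$, a single application of the right branch $Q_\gamma(x) = 1 + s - sx$ sends $A$ onto $[1 - s\gamma,\, 1 + s - s\gamma]$. The portion already in $A$ corresponds to $x \in [\gamma,(1+s-\gamma)/s]$ (return time $1$, length $(1 + s - (s+1)\gamma)/s$), while the rest (image points below $\gamma$) returns after successive iterations of the left branch $x \mapsto x+1$, producing $s-1$ middle cells each of length $1/s$ with return times $k = 2, \dots, s$, and a terminal cell of length $((s+1)\gamma - s)/s$ with return time $s+1$. Summing $\sum_k k\cdot|\{\tau=k\}|$ gives $\bar\tau = (s+1)\gamma - (s-1)/2$, and hence $h_{\mu_\gamma}(Q_\gamma) = 2\log s/(2(s+1)\gamma - s + 1)$ as claimed. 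The case $\gamma \leq 0$ is analogous, but uses \emph{two} consecutive applications of the right branch to send $x \in A$ to $1 - s^2 + s^2 x$; the resulting return-time partition of $A$ has $s^2$ pieces (one of return time $2$, $s^2 - 1$ of length $1/s^2$ with return times $3, \dots, s^2+1$, and one terminal piece of return time $s^2 + 2$), and a parallel computation yields $\bar\tau = (s^2 + 3)/2 - \gamma(s^2 - 1)$.

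The only real obstacle is the careful bookkeeping of the return-time partition of $A$; once this is set up the rest is elementary algebra.
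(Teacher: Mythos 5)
Your strategy is the same as the paper's: induce on $A=[\gamma,\gamma+1]$, note that the first return map has constant slope ($-s$, resp.\ $s^2$) and preserves Lebesgue measure, get $h_m(R)$ from Rokhlin, identify the induced measure of $\mu_\gamma$ with normalized Lebesgue, and reduce everything via Abramov/Kac to computing $\bar\tau=\int_A\tau\,dm$ (the paper leaves this as ``a somewhat tedious computation''). The identification of the induced measure and the check that no prematching point lies in the interior of $A$ are fine. The genuine problem is that your explicit return-time bookkeeping is only valid on part of the claimed parameter range. For $\gamma\ge s/(s+1)$ your partition (a return-time-$1$ cell of length $(1+s-(s+1)\gamma)/s$, then $s-1$ cells of length $1/s$ with return times $2,\dots,s$, then a terminal cell of return time $s+1$) holds only for $s/(s+1)\le\gamma\le 1$: for $\gamma>1$ one has $1+s-s\gamma<\gamma$, so the whole image $Q_\gamma(A)$ lies below $\gamma$, there is no return-time-$1$ cell, and all return times are larger (e.g.\ for $s=2$, $\gamma=2$ the two cells have return times $5$ and $6$, still giving $\bar\tau=5.5$). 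Likewise for $\gamma\le 0$ your list (which, incidentally, has $s^2+1$ pieces, not $s^2$) requires $1+s^2\gamma\ge\gamma$, i.e.\ $\gamma\ge -1/(s^2-1)$; for more negative $\gamma$ there is no return-time-$2$ cell and the return times grow like $(s^2-1)|\gamma|$. So as written the computation does not establish the formula on the full ranges $\gamma\ge s/(s+1)$ and $\gamma\le 0$, which are exactly what the proposition claims (and what is needed for the unbounded matching intervals of Example~\ref{ex1}).

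The gap is closable with a uniform computation rather than cell-by-cell enumeration. What is uniform in $\gamma$ is only the number of expanding steps before the climb: $\tau(x)=1+\lceil\gamma-Q_\gamma(x)\rceil^{+}$ for $\gamma\ge s/(s+1)$ and $\tau(x)=2+\lceil\gamma-Q_\gamma^2(x)\rceil^{+}$ for $\gamma\le 0$. Changing variables ($u=\gamma-y$, $|dy|=s\,dx$ resp.\ $s^2\,dx$) turns $\bar\tau$ into $1+\frac1s\int\lceil u\rceil\,du$ (resp.\ $2+\frac1{s^2}\int\lceil u\rceil\,du$) over the image window; when the image of $A$ lies entirely below $\gamma$ this window has integer length $s$ (resp.\ $s^2$), and since $\lceil u\rceil-u$ has mean $\tfrac12$ over any interval of integer length, the integral is again affine in $\gamma$ and yields the same expressions $\bar\tau=(s+1)\gamma-\frac{s-1}{2}$ and $\bar\tau=\frac{s^2+3}{2}-(s^2-1)\gamma$ that you obtained in the regime you treated. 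Either do this, or add the cases $\gamma>1$ and $\gamma<-1/(s^2-1)$ explicitly; with that correction your proof is complete and coincides with the paper's argument.
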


\begin{proof}
First assume that $\gamma\leq 0 $.
We use the first return map $R = Q_\gamma^\tau:[\gamma,1+\gamma) \to [\gamma,1+\gamma)$
with first return time $\tau(x) = \min\{n \ge 1 : Q_\gamma^n(x) \in [\gamma,1+\gamma]\}$, i.e.
$$
R(x) = s^2x + k(x)  \qquad \text{ where } k(x)\in \Z \text{ such that } \gamma \leq s^2x+k(x) < 1+\gamma.
$$
This map has constant slope $s^2$ and preserves Lebesgue measure,
so its entropy $h_m(R) = 2\log s$. 
Using Abramov's formula $h_\mu(Q_\gamma) \int \tau \, dm = h_m(R)$.
A somewhat tedious computation gives the above answer.

For $\gamma \geq \frac{s}{s+1}$, the first return map $R$ to $[\gamma,1+\gamma)$
has slope $-s$  and metric entropy $h_m(R) = \log 2$. Again, Abramov's formula gives the required answer. 
\end{proof}

In \cite{BORG} the authors provide an argument which proves that topological entropy stays constant on neutral matching intervals. 
In fact, an analogue of Theorem~\ref{thm:monotone} seems to be true for topological entropy,  we will prove it in the  
integer slope case in the next section.

\section{Integer slopes}\label{sec:integer}
Throughout this section, $s\geq 2$ will be an integer, and $\QS = \{ p/s^m : p \in \Z, m \ge 0\}$ will denote the set of {\em $s$-adic rationals}.

When the slope $s \in \N$ we can give a quite complete account of the phenomenon of matching and related features. Many (but not all) of these features can also be observed for other values of the slope, but for these cases we still miss rigorous proofs (see Section \ref{sec:numerical} for a more detailed discussion of these issues).

Let us point out that in this integer slope case the bifurcation set is bounded\footnote{This is not true for general slope.}. Indeed,
from Example~\ref{ex1}, it follows that
matching holds on the two half lines $(-\infty,0)$ and 
$(\frac{s}{s+1},+\infty)$. Hence $\EE \subset [0, \frac{s}{s+1}]$.

\subsection{Matching is typical}\label{ssec:typical}

%\begin{proof}[Proof of Theorem~\ref{thm:typical}]
The main focus of this section is proving Theorem \ref{thm:typical}. 
A key ingredient to reach this goal is a neat characterization 
of the bifurcation set. Additionally, this characterization 
has some consequences that will be very useful in Section~\ref{sec:plateaux}.
 
Let us observe that, since $1$ is a fixed point for $Q_\gamma$ for all $\gamma \in \R$, we get that  if $\gamma \in \QS$; then both positive and negative orbit of $\gamma$ end 
up in $1$, and derivatives can be made to match.
Furthermore, $\gamma \notin PM_\gamma$, see \eqref{eq:prematching}.
This implies that the matching persist under a small perturbation
in $\gamma$, and hence matching is an open and dense condition.
%\end{proof}

We now claim that the first returns of $Q_\gamma$ on $[0,1)$
are  modeled by the map $g(x):=s(1-x) \bmod 1$.

\begin{lemma}\label{L:firstreturn}
 Let $x\in (0,1)$ and let $R(x)$ denote the first return of $Q_\gamma^k(x)$
 to $[0,1)$. Then
 $$R(x):=\left\{
\begin{array}{ll}
 g(x) & \mbox{ if } x\in (0,\gamma)\\
 g^2(x) & \mbox{ if } x\in (\gamma,1)
\end{array}
\right.
$$
 Note that $R$ is not defined for $x=0$ because in this case $Q_\gamma^k(x)=1$ for all $k\geq 1$ 
 (it never returns to $[0,1)$). 
\end{lemma}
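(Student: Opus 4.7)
The plan is to unfold the orbit of $x$ one step at a time using the two explicit branches of $Q_\gamma$ and compare the outcome with $g$ and $g^{2}$. Throughout, I use that the left branch is $x\mapsto x+1$ on $(-\infty,\gamma]$ and the right branch is $x\mapsto 1+s(1-x)$ on $(\gamma,+\infty)$.

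First I would record the common initial step. For every $x\in(0,1)$ one has $Q_\gamma(x)\geq 1$: on the left branch $Q_\gamma(x)=x+1>1$, and on the right $Q_\gamma(x)=1+s(1-x)\geq 1$. Hence the orbit leaves $[0,1)$ at step $1$ and the first-return time satisfies $\tau(x)\geq 2$. Moreover $Q_\gamma(x)>1>\gamma$, so the right branch must be used at step $2$, which yields $Q_\gamma^{2}(x)=1-sx$ when $x\in(0,\gamma)$ and $Q_\gamma^{2}(x)=1-s^{2}(1-x)$ when $x\in(\gamma,1)$.

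Next I would show that from step $2$ onward the orbit performs a pure ``add $1$'' process until it re-enters $[0,1)$. In both cases $Q_\gamma^{2}(x)<1$, so if $Q_\gamma^{2}(x)\geq 0$ we are already back; otherwise $Q_\gamma^{2}(x)<0\leq\gamma$, the left branch applies, and $Q_\gamma^{2+j}(x)=Q_\gamma^{2}(x)+j$ for all $j\geq 0$ until the sum first becomes non-negative. Crucially the orbit cannot overshoot $[0,1)$, because applying the left branch to a negative number produces a value strictly below $1$; consequently $R(x)$ equals $Q_\gamma^{2}(x)$ reduced modulo integers into $[0,1)$.

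It then remains to identify these reductions with $g(x)$ and $g^{2}(x)$. Since $g(x)=s(1-x)\bmod 1=(1-sx)\bmod 1$, the first case is immediate. For the second, unfolding $g(x)=s(1-x)-\lfloor s(1-x)\rfloor$ gives $g^{2}(x)=(s-s^{2}(1-x))\bmod 1$, which differs from $1-s^{2}(1-x)$ by the integer $s-1$, so the two agree in $[0,1)$. The lemma's exclusion of $x=0$ is automatic, since $Q_\gamma(0)=1$ is fixed. The only step requiring a bit of care is the ``no overshoot'' claim during the add-$1$ phase, but this follows at once from the observation that a single left-branch image of a negative point stays strictly below $1$, so the orbit lands in $[0,1)$ on precisely the step when it first becomes non-negative.
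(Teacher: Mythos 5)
Your argument is correct and is precisely the direct branch-by-branch computation that the paper treats as immediate (the lemma is stated without proof; the explicit iterate counts appear later in \eqref{eq:R}), so there is nothing to add on the level of approach. The only point worth flagging is that your identifications $(1-sx)\bmod 1 = s(1-x)\bmod 1$ and $\bigl(s-s^{2}(1-x)\bigr)\bmod 1 = g^{2}(x)$ rely on $s$ being an integer, which is indeed the standing assumption of Section~\ref{sec:integer}, and your ``no overshoot'' observation (a left-branch image of a negative point lies below $1$, and negative points satisfy $y\le\gamma$ since $\gamma\ge 0$ here) correctly settles the add-one phase.
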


\begin{figure}[h]
\begin{center}
\unitlength=3mm
\begin{picture}(16,16)(-3,-2) 
\put(0,-2){\line(0,1){16}}
\thicklines
\put(-2,-2){\line(1,0){16}} \put(-2,-2){\line(0,1){16}} 
\put(-2,14){\line(1,0){16}} \put(14,-2){\line(0,1){16}} 
\put(0,6){\line(1,4){2}} \put(2,-2){\line(1,4){4}} 
\put(6,-2){\line(1,4){4}} \put(10,-2){\line(1,4){4}} 
\put(-2,14){\line(1,-2){2}}
\put(-0.3, -2.97){\small $\gamma$}
\end{picture}
\caption{The map $R$ for slope $s = 2$ and $\gamma = 1/8$.}
\label{fig:F}
\end{center}
\end{figure}
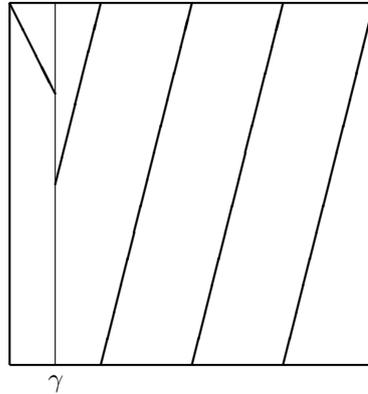

\begin{proposition}\label{P:HB}
 Let $\gamma \in [0,1]$ be fixed. Then the following conditions are equivalent:
 \begin{enumerate}
  \item[(i)]  $g^k(\gamma)<\gamma$ for some $k\in \N$;
  \item[(ii)] $\gamma$ belongs to the matching set. 
 \end{enumerate}
 In other words, the bifurcation set is 
 \begin{equation}\label{EE}
 \EE = \{ \gamma \in [0,1] \ : \ g^k(\gamma) \geq \gamma \ \forall k\in \N \}.
\end{equation}
\end{proposition}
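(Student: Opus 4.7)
The plan is to translate the matching condition~\eqref{eq:match} into an elementary combinatorial condition on the iterates $g^k(\gamma)$, using the first-return description of Lemma~\ref{L:firstreturn}.

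First, I would record how the orbits of $\gamma^\pm$ visit $[0,1)$. By Lemma~\ref{L:firstreturn}, the lower orbit first enters $[0,1)$ at $g(\gamma)$ (via $\gamma^-\mapsto\gamma+1\mapsto 1-s\gamma\mapsto\dots\mapsto g(\gamma)$) and the upper orbit first enters at $g^2(\gamma)$; subsequently, from any $y\in[0,1)$ the next return is $g(y)$ if $y<\gamma$ and $g^2(y)$ if $y>\gamma$. Let $A=(a_i)_{i\ge 1}$, with $a_1=1$, record the $g$-indices visited by the lower orbit, and $B=(b_j)_{j\ge 1}$, with $b_1=2$, those of the upper orbit, so that each increment $a_{i+1}-a_i$, $b_{j+1}-b_j$ lies in $\{1,2\}$. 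Counting right-branch uses (one per $g$-step, each contributing a factor $-s$; left branches contribute $1$), a short induction gives the accumulated derivative as $(-s)^{a_i}$ at the $i$-th lower visit and $(-s)^{b_j}$ at the $j$-th upper visit.

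Next, I would show the matching condition is equivalent to $A\cap B\ne\emptyset$. The ``if'' direction is immediate: any $k\in A\cap B$ provides a matching at $g^k(\gamma)$ with common derivative $(-s)^k$. For ``only if'', any derivative-matched coincidence of the two orbits forces them to synchronize thereafter by determinism, so they subsequently visit $[0,1)$ at a common $g^k(\gamma)$, placing $k$ in $A\cap B$. The proof then concludes by showing $A\cap B\ne\emptyset$ iff $g^k(\gamma)<\gamma$ for some $k$: if every $g^k(\gamma)\ge\gamma$ then all increments equal $2$, so $A=\{1,3,5,\dots\}$ and $B=\{2,4,6,\dots\}$ are disjoint; otherwise, if $k^*$ is the minimum with $g^{k^*}(\gamma)<\gamma$, then up to $k^*$ the odd and even indices lie respectively in $A$ and $B$, but the first increment-by-$1$ triggered by $g^{k^*}(\gamma)<\gamma$ places $k^*+1$ into both sequences.

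Finally, to verify $\gamma\notin PM_\gamma$ as required by Definition~\ref{def:matching-int}: existence of $k^*$ forbids $\gamma$ from being $g$-periodic, so $g^j(\gamma)\ne\gamma$ for $j\le k^*$, and a direct inspection of the itineraries between $[0,1)$-visits shows all other orbit points lie in $(-\infty,0)\cup[1,\infty)$, hence cannot coincide with $\gamma$. The main technical obstacle I expect is the ``only if'' direction in the paragraph above: one must rule out a ``spurious'' match where locations agree but derivatives disagree (such as at $\gamma=s/(s+1)$, where $\gamma+1=1+s(1-\gamma)$ but the derivatives differ by a factor of $-s$). The resolution is that the derivative condition $(-s)^{r^-}=(-s)^{r^+}$ forces right-branch counts to agree, and synchronization then carries this agreement to a subsequent $[0,1)$-visit, producing the required element of $A\cap B$.
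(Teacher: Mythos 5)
Your strategy is essentially the paper's: use the first-return structure of Lemma~\ref{L:firstreturn}, note that the lower/upper orbits visit $[0,1)$ along odd/even powers of $g$ as long as the iterates stay above $\gamma$, and observe that the first drop below $\gamma$ forces a common visit at index $k^*+1$ with common derivative $(-s)^{k^*+1}$; your $(i)\Rightarrow(ii)$ direction, including the verification that $\gamma\notin PM_\gamma$, matches the paper's argument. However, there is a genuine gap in the converse direction. Your dichotomy rests on the claim ``if every $g^k(\gamma)\ge\gamma$ then all increments equal $2$'', but your branch rule is only stated for visits $y<\gamma$ and $y>\gamma$, and the critical case is precisely a visit with $g^k(\gamma)=\gamma$. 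This case is not exotic: it occurs exactly at the $g$-periodic points of $\EE$, i.e.\ at $\gamma=\frac{s}{s+1}$ and at every right endpoint $\xi_R=.\overline{w}$ of a matching interval, a dense family in $\EE$. Worse, the ``natural'' way to fill the hole --- using the left branch at such a visit, since $Q_\gamma$ is defined by $x\mapsto x+1$ for $x\le\gamma$ --- gives increment $1$, puts $k+1$ in both index sequences, and wrongly concludes that these parameters satisfy matching (try $s=2$, $\gamma=2/3$). The missing idea is the one-sided argument that constitutes the core of the paper's proof of this direction: since the upper and lower orbits are limits from the right and left, the branch at a visit whose limit point equals $\gamma$ is decided by the orientation of the accumulated map, and an induction (the paper's $\delta_j$'s) using that $g^{2j}$ is increasing and $g^{2j-1}$ is decreasing shows $g^{2j}(x)>g^{2j}(\gamma)\ge\gamma$ for $x>\gamma$ nearby (and similarly on the left), so the increments are indeed all $2$ for the one-sided orbits even in the equality case.

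Two smaller points. First, you should split off the $s$-adic case $g^k(\gamma)=0$ at the outset, as the paper does: when the orbit of $\gamma$ under $g$ hits $0$ (equivalently hits a discontinuity of $g$), the identification of the $[0,1)$-visits with $g^k(\gamma)$ and the derivative bookkeeping break down (one-sided limits no longer agree with the values), and your step ``after a matched coincidence the orbits subsequently visit $[0,1)$ at a common $g^k(\gamma)$'' fails because both orbits are absorbed at the fixed point $1$; in that case one checks directly that both (i) and (ii) hold (for $\gamma>0$). Second, the assertion that ``existence of $k^*$ forbids $\gamma$ from being $g$-periodic'' is false (for $s=2$, $\gamma=2/5$ is $g$-periodic with $g(\gamma)<\gamma$); what you actually need, and what follows from minimality of $k^*$, is only that $g^j(\gamma)\ne\gamma$ for $1\le j\le k^*$.
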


\begin{proof}
It is immediate to check that if $\gamma=p/s^m$ then both conditions (i) and (ii) hold, so let us 
assume that $\gamma\in[0,1]$ is not of this particular form, which is the same as 
$g^k(\gamma)>0$ for all $k \in \N$.

$\mathbf{(ii) \Rightarrow (i)}$. Let us assume that $g^k(\gamma)\geq \gamma $ for all $k\in \N$. 
It follows by induction that for all integer value $j\geq 1$ there is  $\delta_j>0$ such that
$$
\begin{array}{ll}
R^j(x)=g^{2j}(x)>\gamma & \forall x\in (\gamma, \gamma+\delta_j)\\
R^j(x)=g^{2j-1}(x)>\gamma & \forall x\in (\gamma -\delta_j, \gamma)
\end{array}
$$
and this implies that the upper orbit $\{R^j(\gamma^+), \ \ j \in \N\}$ coincides with even powers of $g$, while the lower orbit $\{R^j(\gamma^-), \ \ j \in \N\}$ coincides  with odd powers of $g$, hence matching cannot take place because upper and lower orbits will never meet. 
%%cc (or they meet, but with mismatched derivatives).

$\mathbf{(i) \Rightarrow (ii)}$.
Let us assume that  $g^k(\gamma)<\gamma$ for some integer $k$, and let us set
$$k_0:=\min\{k\in \N \ : \ g^k(\gamma)\in (0,\gamma)\}$$
We shall split the discussion depending on whether $k_0$ is even or odd.

If $k_0=2h_0$ then
$$\begin{array}{ll}
R^j(\gamma^+)=g^{2j}(\gamma)  & 1\leq j \leq h_0,\\
R^j(\gamma^-)=g^{2j-1}(\gamma) & 1\leq j\leq h_0+1.
\end{array}
$$
Since $g^{2h_0}(\gamma)<\gamma$ we get that $ R^{h_0+1}(\gamma^+)=g^{2h_0+1}(\gamma)= R^{h_0+1}(\gamma^-)$, 
hence also derivatives match $ (R^{h_0+1})'(\gamma^+)=(-s)^{2h_0+1}= (R^{h_0+1})' (\gamma^-)$
and $\gamma \notin PM_\gamma$.

If $k_0=2h_0+1$ then
$$
\begin{array}{ll}
R^j(\gamma^+)=g^{2j}(\gamma)  & 1\leq j \leq h_0+1\\
R^j(\gamma^-)=g^{2j-1}(\gamma) & 1\leq j\leq h_0+1
\end{array}
$$
Since $g^{2h_0+1}(\gamma)<\gamma$ we get that $ R^{h_0+2}(\gamma^-)=g^{2h_0+2}(\gamma)= R^{h_0+1}(\gamma^+)$. Hence also derivatives match $ (R^{h_0+2})'(\gamma^-)=(-s)^{2h_0+2}= (R^{h_0+1})' (\gamma^+)$ and $\gamma \notin PM_\gamma$.
\end{proof}

From the proof of Proposition~\ref{P:HB} one can easily get the following corollary, which will be used later on:

\begin{corollary}\label{C:periodic} 
Let us assume that  $\gamma$ belongs to the matching set. Then
the upper orbit of $\gamma$ is periodic for $Q_\gamma$ if and only if  the lower orbit of $\gamma$ is periodic,
and these two orbits  have the same multiplier.

Let us call $p_u$ and $p_\ell$ the period of the upper and lower orbit, respectively; then 
$$ p_u \geq \kappa^+, \ \ \ p_\ell \geq \kappa^-, \ \ p_u-p_\ell =\kappa^+-\kappa^- = \Delta$$  
where $\kappa^\pm$ are the matching exponents of $\gamma$
\end{corollary}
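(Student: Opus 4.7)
The plan is to use the fact that once matching occurs, the upper and lower orbits of $\gamma$ merge into a single common tail, so periodicity of either orbit transfers to the other, shifted by $\Delta$ steps. Set $q := Q_\gamma^{\kappa^+}(\gamma^+) = Q_\gamma^{\kappa^-}(\gamma^-)$; iterating the matching identity gives
$$
Q_\gamma^{\kappa^+ + j}(\gamma^+) = Q_\gamma^j(q) = Q_\gamma^{\kappa^- + j}(\gamma^-) \quad \text{for every } j \geq 0.
$$

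First I would establish the equivalence and the precise values of the periods simultaneously. Define $m^\star$ to be the smallest non-negative integer such that $Q_\gamma^{m^\star}(q) = \gamma$, with the convention $m^\star = +\infty$ if no such integer exists. The hypothesis $\gamma \notin PM_\gamma$ guarantees $Q_\gamma^j(\gamma^\pm) \neq \gamma$ for $1 \leq j \leq \kappa^\pm - 1$, so the first return of $\gamma^\pm$ to $\gamma$ must occur at time $\kappa^\pm + m^\star$. Hence the upper orbit is periodic if and only if the lower one is, and in that case $p_u = \kappa^+ + m^\star \geq \kappa^+$, $p_\ell = \kappa^- + m^\star \geq \kappa^-$, giving $p_u - p_\ell = \Delta$.

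For the multipliers, I would split by the chain rule
$$
(Q_\gamma^{p_u})'(\gamma^+) = (Q_\gamma^{\kappa^+})'(\gamma^+)\cdot (Q_\gamma^{m^\star})'(q),
$$
and symmetrically
$$
(Q_\gamma^{p_\ell})'(\gamma^-) = (Q_\gamma^{\kappa^-})'(\gamma^-)\cdot (Q_\gamma^{m^\star})'(q).
$$
The derivative condition in \eqref{eq:match} asserts $(Q_\gamma^{\kappa^+})'(\gamma^+) = (Q_\gamma^{\kappa^-})'(\gamma^-)$, so the common factor forces the two multipliers to coincide. The argument is essentially bookkeeping; the only point requiring some care is invoking $\gamma \notin PM_\gamma$ to rule out earlier returns and thereby upgrade the inequalities $p_u \geq \kappa^+$ and $p_\ell \geq \kappa^-$ to equalities with a common $m^\star$ on both sides.
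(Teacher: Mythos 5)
Your argument is correct, but it reaches the crucial point by a different route than the paper. The paper reduces everything to showing that the period cannot be completed before matching ($p_u \geq \kappa^+$, $p_\ell \geq \kappa^-$), and it proves this through the integer-slope machinery of Proposition~\ref{P:HB}: before matching, the returns of $\gamma^+$ (resp.\ $\gamma^-$) to $(0,1)$ are the points $g^{2j}(\gamma)$ (resp.\ $g^{2j-1}(\gamma)$), which stay strictly above $\gamma$, so no premature return to $\gamma$ is possible; it then transfers periodicity by replacing the first $\kappa^+$ entries of the upper orbit with the first $\kappa^-$ entries of the lower one. You get the same key point more directly and without using $s\in\N$ at all: membership in the matching set includes $\gamma\notin PM_\gamma$, which by \eqref{eq:prematching} literally says $Q_\gamma^j(\gamma^\pm)\neq\gamma$ for $1\leq j\leq \kappa^\pm-1$, and your common hitting time $m^\star$ of the orbit of $q$ makes the bookkeeping $p_u=\kappa^++m^\star$, $p_\ell=\kappa^-+m^\star$ and the factorization of the multipliers through $(Q_\gamma^{m^\star})'(q)$ completely explicit, where the paper argues by substitution of initial segments. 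Two small caveats. First, the identity $Q_\gamma^{\kappa^++j}(\gamma^+)=Q_\gamma^j(q)=Q_\gamma^{\kappa^-+j}(\gamma^-)$, read as one-sided limits, is only guaranteed for $j\leq m^\star$: once the merged orbit hits the discontinuity $\gamma$, the two one-sided orbits may separate again; you only use the identity in that range, so nothing breaks, but ``for every $j\geq 0$'' is an overstatement. Second, like the paper, you are implicitly reading ``period'' as the first return time of the orbit to the point $\gamma$ (the reading needed in Remark~\ref{R:PP} and Proposition~\ref{P:monotone-htop}); if one instead took the minimal period of the full sequence of one-sided limits, the conclusion $p_u-p_\ell=\Delta$ can fail when the return multiplier is negative (e.g.\ $s=2$, $\gamma=7/9$, where the first returns occur at times $4$ and $7$ but both one-sided sequences have minimal period $11$), so your identification of the period with $\kappa^\pm+m^\star$ is exactly the intended one. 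With that understanding your proof is complete, and in fact slightly more general than the paper's, since it nowhere invokes the first-return description via $g$.
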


 \begin{proof}
 Suppose the upper orbit is periodic under $Q_\gamma$ with period $p_u$, in view of the matching condition it 
 suffices to check that $p_u \geq \kappa^+$, \ie the period is not reached before matching takes place.
 
Let us use the same notation as in Proposition~\ref{P:HB}; the first returns of the upper orbit to $(0,1)$ are $R^j(\gamma^+)=g^{2j}(\gamma)>\gamma$ as far as $2j<k_0$, and two cases are possible:
\begin{enumerate}
\item[{[even]}] $k_0=2h_0$, $R^{j}(\gamma^+)>\gamma$ for $j< h_0$, $R^{h_0}(\gamma^+)<\gamma$ 
and $R^{h_0+1}(\gamma^+)=R^{h_0+1}(\gamma^-)$;\\
\item[{[odd]}] $k_0=2h_0+1$, $R^{j}(\gamma^+)>\gamma$ for $j\leq h_0$ and 
$R^{h_0+1}(\gamma^+)=R^{h_0+2}(\gamma^-)$.
\end{enumerate} 
In either case it is clear that $p_u \geq \kappa^+$, \ie the period is not completed before 
matching occurs.
Therefore we can change the upper orbit into the lower orbit just substituting the 
initial $\kappa^+$ items of the first with the initial $\kappa^-$ items of the latter; 
thus we have that the period changes accordingly $p_u-p_\ell =\kappa^+-\kappa^-$, 
and yet the multiplier does not change, because of the matching of derivatives.
\end{proof}

%%%%%%%%%% cc added
\begin{lemma}\label{L:pee}
Let $\gamma_0\in (0,1)$ and let $p_0>\gamma_0$ be a periodic point for $Q_{\gamma_0}$.
If $p_0\in \EE$ then  $p_0$ is the right endpoint of a matching interval. 
\end{lemma}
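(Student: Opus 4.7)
The plan is to use the characterization \eqref{EE}: $\gamma$ belongs to the matching set if and only if $g^k(\gamma) < \gamma$ for some $k \in \N$. It therefore suffices to produce a single iterate $g^K$ such that $g^K(\gamma) < \gamma$ holds uniformly for $\gamma$ in some interval $(p_0-\epsilon, p_0)$; together with the hypothesis $p_0 \in \EE$, this identifies $p_0$ as the right endpoint of a matching interval.

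First I want to upgrade the $Q_{\gamma_0}$-periodicity of $p_0$ to $g$-periodicity. Since $p_0 > \gamma_0$ and $p_0 \in [0, 1)$ (using $\EE \subset [0, s/(s+1)]$), the orbit of $p_0$ under $Q_{\gamma_0}$ returns to $[0, 1)$ at the first-return times of Lemma~\ref{L:firstreturn}, and the $Q_{\gamma_0}$-periodicity of $p_0$ forces this orbit to return to $p_0$ after finitely many such returns; so $p_0$ is periodic for the first-return map $R$. Because each $R$-iterate equals either $g$ or $g^2$, composing along one $R$-period yields $g^l(p_0) = p_0$ for some $l \ge 1$.

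Next I want to rule out $p_0 \in \QS$. A direct computation shows that $g$ sends $p/s^m$ (with $m \ge 1$) to a point of the form $q/s^{m-1}$, so the $s$-adic denominator exponent strictly decreases under $g$ until the orbit lands at the fixed point $0$; consequently no nonzero $s$-adic rational can be $g$-periodic. Since $p_0 \ge \gamma_0 > 0$ and $g^l(p_0) = p_0$, this forces $p_0 \notin \QS$. In particular $p_0$ is not a discontinuity of any iterate $g^k$, the discontinuities of each $g^k$ being all $s$-adic.

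Finally, since $g^{2l}$ is piecewise affine with only finitely many pieces and $p_0$ lies in the interior of one of them, there is $\epsilon > 0$ such that $g^{2l}$ is affine with slope $(-s)^{2l} = s^{2l}$ on $(p_0-\epsilon, p_0+\epsilon)$. Combined with $g^{2l}(p_0) = p_0$, this gives
\[
g^{2l}(\gamma) - \gamma = (1 - s^{2l})(p_0 - \gamma) < 0
\qquad \text{for all } \gamma \in (p_0-\epsilon, p_0),
\]
so Proposition~\ref{P:HB} places $(p_0-\epsilon, p_0)$ in the matching set, and since $p_0 \in \EE$ this interval abuts $p_0$ from the left inside a single matching component. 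The main obstacle is the $s$-adic step: if $p_0$ were in $\QS$, discontinuities of the iterates $g^k$ would accumulate at $p_0$ and the linearization above would break down; ruling this out via the specific action of $g$ on $\QS$ is the one place where the hypothesis that $p_0$ supports a nontrivial periodic orbit is really used.
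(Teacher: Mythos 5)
Your proof is correct, and it takes a mildly but genuinely different route from the paper's. Both arguments rest on the same mechanism: $p_0$ is a fixed point of an even iterate of $g$ that is affine, expanding and orientation preserving near $p_0$, so every $\gamma$ slightly to the left of $p_0$ satisfies $g^{2l}(\gamma)<\gamma$ and lies in the matching set by Proposition~\ref{P:HB}, while $p_0\in\EE$ then forces $p_0$ to be the right endpoint of that component. The difference is how the local affinity is justified. The paper stays with the maps $Q_\gamma$: it uses $p_0\in\EE$ at the outset to see that the orbit of $p_0$ avoids $[\gamma_0,p_0)$, hence the periodic orbit persists unchanged for all $\gamma\in[\gamma_0,p_0)$, and it then shrinks a one-sided neighbourhood of $p_0$ so that the moving discontinuity $\gamma$ is avoided by the finitely many relevant iterates, getting $Q_\gamma^{j_0}=g^m$ there (splitting into the cases $m$ even/odd). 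You instead transfer everything once and for all to the $\gamma$-independent map $g$ via Lemma~\ref{L:firstreturn}, and you dispose of the discontinuity issue by noting that $g$ strictly decreases the $s$-adic denominator exponent, so no nonzero point of $\QS$ is $g$-periodic; hence $p_0\notin\QS$, it is not a discontinuity of $g^{2l}$, and you obtain a two-sided interval on which $g^{2l}$ is affine with slope $s^{2l}$. This buys a cleaner proof: you invoke $p_0\in\EE$ only at the very end and never need to track how the periodic orbit deforms with $\gamma$, whereas the paper's version makes that persistence explicit. One cosmetic caveat: Lemma~\ref{L:firstreturn} defines $R$ only on $(0,\gamma)\cup(\gamma,1)$, so when composing returns you should either note that periodicity rules out the return orbit hitting $0$ and that a return from $\gamma_0$ would still be $g(\gamma_0)$, or simply observe (as the paper does) that $p_0\in\EE$ gives $g^k(p_0)\ge p_0>\gamma_0$, so the return orbit never meets $[0,p_0)$ at all; this does not affect the validity of your argument.
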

\begin{proof}
Let $j_0$ be the period of $p_0$, so $Q^{j_0}_{\gamma_0}(p_0)=p_0$; let us recall that all 
elements of the form $Q^{j}_{\gamma_0}(p_0)$
falling in $(0,1)$ are of the form $g^k(p_0)$ for some $k$. 
In particular, since $p_0\in \EE$ implies $g^k(p_0)\geq p_0$, there are no elements of the orbit of $p_0$ 
in the interval $[\gamma_0,p_0)$. Therefore $p_0$ is $k_0$-periodic for all $\gamma \in [\gamma_0, p_0)$, 
and its orbit does not change as $\gamma$ ranges in this interval, 
i.e., $Q^{j_0}_{\gamma}(p_0)=p_0 \ \forall \gamma \in [\gamma_0, p_0)$. 
Moreover, since $\gamma$ cannot belong to the orbit of $p_0$ we also have that $Q^{j_0}_{\gamma}=g^m$ 
in an open neighborhood of $p_0$. We now split the discussion in two cases.

{\bf m even.} Set $U_\gamma:=(\gamma, p_0)$  and choose $\delta>0$ so small that  $$\gamma\notin  Q^{j}_{\gamma}(U_\gamma) \ \ \forall k : \ 0\leq j <j_0, \ \ \forall \gamma: \ p_0-\delta<\gamma<p_0.$$
Thus $Q^{j}$ is continuous on $U_\gamma$ for all $0\leq j\leq j_0$; in particular   $Q^{j_0}(t)=g^m(t) \ \forall t \in U_\gamma$ as soon as $\gamma \in (p_0-\delta,p_0)$. Since $g^m$ is expanding, orientation preserving and $g^m(p_0)=p_0$ we get that $g^m(\gamma)<\gamma \ \  \forall \gamma: \ p_0-\delta<\gamma<p_0$, and by Proposition \ref{P:HB} this proves the claim.

{\bf m odd.}  In this case $g^m$ is orientation reversing, but  we can choose $\delta>0$ so small that setting $U_\gamma:=(\gamma, p_0)$ then $\gamma\notin  Q^{j}_{\gamma}(U_\gamma) \ \ \forall k : \ 0\leq j <2j_0, \ \ \forall \gamma: \ p_0-\delta<\gamma<p_0$, so that $Q^{j}$ is continuous on $U_\gamma$ for all $0\leq j\leq 2j_0$; in particular $Q^{2j_0}(t)=g^{2m}(t) \ \forall t \in U_\gamma$ as soon as $\gamma \in (p_0-\delta,p_0)$. Since $g^{2m}$ is expanding, orientation preserving and $g^{2m}(p_0)=p_0$
 the claim follows just as in the previous case.
\end{proof}

\begin{remark}\label{R:PP}
The last two results above are useful to describe how periodic orbits change as $\gamma$ changes.
Indeed a periodic orbit can only change when $\gamma$ crosses it, and this crossing may take place in a matching interval, in which case the periodic orbit persists but its period decreases by $\Delta$ as $\gamma$ increases, or at the right endpoint of a matching interval. In the latter case, which is quite "rare", the periodic point may even disappear.
\end{remark}

For $t\in [0,1]$ define
\begin{equation}\label{eq:kappa}
 K(t):=\{ x\in [0,1] \ : \ g^k(x)\geq t  \ \forall k\in \N \},
\end{equation}

Since $g$ is ergodic, the Lebesgue measure of $K(t)$ is zero. Moreover $\EE\cap [t,1] \subset K(t)$. 
From this, it is easy to give the

\begin{proof}[Proof of Theorem~\ref{thm:typical}]
Lebesgue measure is preserved by $g:[0,1) \to [0,1)$,
so the Ergodic Theorem implies that $\inf\{ g^k(\gamma) : k \ge 1\} = 0$
for a.e.\ $\gamma$. Proposition~\ref{P:HB} implies that each such $\gamma \notin \EE$.

Now for the statement of the Hausdorff dimension, note
that $HD(K(t)) \to 1$ as $t \to 0$.
Indeed, fix $m \in M$ and let $K' = \{ x \in [0,1) : g^{km}(x) \geq s^{-m}
 \ \forall k\in \N \}$. This is the set of points avoiding the leftmost
cylinder $[0, s^{-m})$ under iteration of $g^m$, and hence
$HD(K') \ge \frac{\log s^m-1}{\log s^m} \to 1$ as $m \to \infty$. 
If $t < s^{-2m}$, then $K(t) \supset K'$, because if $k$ is such that $g^k(x) \in [0, s^{-2m})$, then there is
$r \le m$ such that $k+r$ is a multiple of $m$ and
$g^{k+r}(x) \in [0, s^{-m})$.

A similar proof shows that also $HD(K(t) \cap [0,t]) \to 1$. 
Using Proposition~\ref{P:HB}
again, we have $HD(\EE \cap [0,1]) \ge HD K(t) \cap [0,t]) \to 1$
as $t \to 0$, finishing the proof.
\end{proof}

\subsection{Monotonicity of topological entropy (integer slope)}

\begin{proposition}\label{P:monotone-htop}
The topological entropy is increasing (constant, decreasing) on every matching interval $J$
with $\Delta>0$ ($\Delta=0$, $\Delta < 0$) respectively.
\end{proposition}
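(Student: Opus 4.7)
The strategy is to mimic the proof of Theorem~\ref{thm:monotone}, replacing Abramov's formula for the metric entropy with its variational analogue for the topological entropy. Fix $\gamma$ in a matching interval $J^*$ and, exactly as in that proof, choose a nice open interval $J\ni\gamma$ disjoint from $PM_\gamma$, with preperiodic endpoints provided by Lemma~\ref{lem:dense_preperiodic}; let $R=R_\gamma$ be the first return map of $Q_\gamma$ to $J$ and $\tau_\gamma$ its return-time function. As already observed in the proof of Theorem~\ref{thm:monotone}, the piecewise affine Markov structure of $R$ (its partition and the slopes on each branch) does not depend on $\gamma\in J^*$: the only $\gamma$-dependence lies in the central branch $J_0\ni\gamma$, which is split at $\gamma$ with
\[
\tau_\gamma(x)=\kappa^-+N\ \text{ for }x\in J_0,\,x<\gamma,\qquad \tau_\gamma(x)=\kappa^++N\ \text{ for }x\in J_0,\,x>\gamma,
\]
for some fixed $N\ge 0$. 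Hence for $\gamma_1<\gamma_2$ in $J^*$ we get $\tau_{\gamma_2}\le\tau_{\gamma_1}$ pointwise when $\Delta=\kappa^+-\kappa^->0$ (the drop by exactly $\Delta$ occurring on $(\gamma_1,\gamma_2)\subset J_0$), the opposite inequality when $\Delta<0$, and equality when $\Delta=0$.

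Next I would invoke the variational Abramov identity
\[
h_{top}(Q_\gamma)\ =\ \sup_{\nu}\frac{h_\nu(R)}{\int_J\tau_\gamma\,d\nu},
\]
with the supremum taken over $R$-invariant Borel probabilities $\nu$ such that $\int\tau_\gamma\,d\nu<\infty$. This identity combines Abramov's formula (which, via Kac's lemma, bijects such $\nu$ with $Q_\gamma$-invariant probabilities charging $J$) with the variational principle; the restriction to measures charging $J$ is harmless since $J$ is a neighborhood of the discontinuity $\gamma$ and $Q_\gamma$ is topologically transitive on its non-wandering set (cf.\ Lemma~\ref{lem:acip}), so every $Q_\gamma$-invariant measure of positive entropy gives positive mass to $J$. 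For each fixed $\nu$ the numerator $h_\nu(R)$ is $\gamma$-independent, while the denominator $\int_J\tau_\gamma\,d\nu$ is monotone in $\gamma$ in the direction dictated by the sign of $\Delta$. Taking the supremum over $\nu$ then yields the asserted monotonicity of $h_{top}(Q_\gamma)$ on $J^*$ (with constancy when $\Delta=0$).

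The main obstacle is the rigorous justification of the variational Abramov identity for the induced system $(R,\tau_\gamma)$. One must verify that $R$ is a piecewise affine, expanding, Markov map with surjective branches (immediate from the niceness of $J$ and from matching, exactly as in the proof of Theorem~\ref{thm:density}), that every $Q_\gamma$-invariant measure of positive entropy lifts from an $R$-invariant $\nu$ with finite $\tau_\gamma$-integral, and that the supremum over such $\nu$ really computes $h_{top}(Q_\gamma)$. These are standard facts from the theory of inducing for piecewise expanding interval maps, but their formal deployment requires some care. A slightly cleaner alternative is to work with the topological pressure $P_R(\phi)=\sup_\nu\{h_\nu(R)+\int\phi\,d\nu\}$: then $h=h_{top}(Q_\gamma)$ is characterized by the Bowen-type equation $P_R(-h\tau_\gamma)=0$, and since $P_R(-h\tau_\gamma)$ depends monotonically on $\gamma$ (by the pointwise monotonicity of $\tau_\gamma$) and strictly decreasingly on $h$, its unique root $h$ depends monotonically on $\gamma$ as well.
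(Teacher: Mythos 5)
Your route (inducing on a nice interval $J$, observing that only the return-time function $\tau_\gamma$ varies with $\gamma$, and transferring this to $h_{top}$ through a variational/Abramov identity or a Bowen equation) is genuinely different from the paper's proof, which is a periodic-orbit count: by Corollary~\ref{C:periodic} every periodic point of sufficiently high period persists and its period drops by exactly $\Delta$ as $\gamma$ overtakes it, and $h_{top}$ is the exponential growth rate of the number of $n$-periodic points, so monotonicity is immediate. Your argument, however, has a genuine gap at its central step. The identity $h_{top}(Q_\gamma)=\sup_\nu h_\nu(R)/\int_J\tau_\gamma\,d\nu$ is not justified by the reason you give: topological transitivity does \emph{not} imply that every invariant measure of positive entropy charges the small interval $J$. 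For expanding piecewise affine maps the set of points whose orbit avoids a small open interval typically carries positive topological entropy, hence positive-entropy measures avoiding $J$; this is exactly the situation analysed in Lemma~\ref{lem:moststable}, where the subsystem avoiding $V$ has positive, merely strictly smaller, entropy. What your identity needs is that the supremum of $h_\mu(Q_\gamma)$ over measures with $\mu(J)>0$ equals $h_{top}(Q_\gamma)$, and the available route is Hofbauer's theorem (existence and uniqueness of the measure of maximal entropy on the transitive core, and the fact that it charges open subsets of the core) together with a proof of transitivity --- precisely the input the paper deploys in Lemma~\ref{lem:moststable}; Lemma~\ref{lem:acip} only gives Lebesgue-ergodicity and does not substitute for it. The alternative via $P_R(-h\tau_\gamma)=0$ suffers from the same issue: proving that its root is $h_{top}(Q_\gamma)$ is equivalent to the same ``no entropy escapes $J$'' statement, so flagging it as ``standard'' does not close the gap.

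Two further points. First, you only treat the recurrent case: your description of the induced map (a single central branch $J_0$ split at $\gamma$ into pieces with return times $\kappa^-+N$ and $\kappa^++N$ for one fixed $N$) presupposes that the orbit of the matching point $q$ enters $J$. If $q$ is not recurrent to $J$, the branch domains of $R$ accumulate at $\gamma$ and no such $N$ exists; the proof of Theorem~\ref{thm:monotone} handles this by inducing on a nice interval around $q$ instead and comparing, inside each branch whose orbit crosses $\gamma$, the proportion of points using the $\kappa^+$-route versus the $\kappa^-$-route, and your argument needs the analogous case. Second, the structure of $R$ is independent of the parameter only while it varies inside the branch $J_0$ (niceness of $J$ controls the orbit of $\partial J$ only there), not over the whole matching interval $J^*$ as you assert; so the pointwise comparison $\tau_{\gamma_2}\le\tau_{\gamma_1}$ is only legitimate locally, and the conclusion should be completed by a local-to-global step (monotone near every parameter of the interval, hence monotone on the interval). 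Both of these are repairable, and if completed your argument would apply beyond the integer-slope setting in which the paper's periodic-orbit proof is placed, but as written the proof is incomplete exactly where the difficulty lies.
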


\begin{proof}
By Corollary~\ref{C:periodic}, periodic points $p \in J$ with period 
$\per(p) \geq \max\{\kappa^+, \kappa^-\}$
will remain periodic but their period decreases by $\Delta$ as the parameter
$\gamma$ (moving from left to right) ``overtakes'' $p$.
There are only finitely many periodic points with $\per(p) < \max\{\kappa^+, \kappa^-\}$.
Since topological entropy is the exponential growth rate of the number 
of $n$-periodic orbits, $\gamma \mapsto h_{top}(Q_\gamma)$ 
increases/decreases/remains unchanged
according to whether $\Delta > 0$,  $\Delta < 0$ or $\Delta = 0$. 
\end{proof}

%%%% Pseudoc
\subsection{Pseudocenters}\label{sec:pseudo}
 In the previous section we noted that $\EE\subset  [0,s/(s+1)]$; now we shall show that there is a canonical set of labels for the components of  $[0,s/(s+1)] \setminus \EE$ which
 turns out also to be useful to keep track of the matching index.
 
Let $\QS$ denote the set of $s$-adic rationals contained in $(0,1]$.

Let  $u$ be a (finite or infinite)  string composed with the alphabet $\mathcal{A}_s:=\{0,1,...,s-1\}$, 
and let $\check{u}$ be the string obtained by $u$ 
flipping each digit by the involution $\epsilon \mapsto s-1-\epsilon$. For instance, 
in the case $s=2$ if $u=000101$, then  $\check{u}=111010$). Note that if $w$ is an {\bf infinite} string with digits in $\mathcal{A}_s$ and $x=.w$ is the corresponding 
 expansion in base $s$, then  $1-x=.\check{w}$.

\begin{definition}\label{D:thickening}
Let $\xi \in \QS$ and let $w$ denote the shortest base $s$ expansion of {\bf even} length
  of $\xi$ and $v$ denote the shortest  base $s$ expansion of {\bf odd} length of $1-\xi$. We
  define the {\it rational interval} generated by $\xi$ as the
  interval $I_\xi:=(\xi_L, \xi_R)$ containing $\xi$ where 
 the endpoints  are given by
$$
\xi_L:=.\overline{\check{v}v}, \ \ \ \xi_R:=.\overline{w}.
$$
\end{definition}

Let us set $b:=s-1$ and $a:=s-2$;
if $\xi=1-1/s$ then $w=b0$, $v=1$ and
$\xi_L=.\overline{a1}$ while $\xi_R=.\overline{b0}$. This is
(almost) the most degenerate example. In fact, for $\xi\in
\QS\setminus \{1-1/s\}$, one can rephrase the definition of both
$\xi_L, \xi_R$ using the (even) expansion of $\xi$ only. Indeed:

\begin{lemma}\label{L:reloaded}
Let $\xi \in [0,1)$ be an $s$-adic rational with even $s$-adic expansion,
so  $\xi = 0.w = 0.\e_1\e_2\dots \e_{2m-1}\e_{2m}$.
Define $v$ to be the odd $s$-adic expansion of $1-\xi$. Then
$$
v = \begin{cases}
\check{\e}_1...\check{\e}_{2m-2}(\check{\e}_{2m-1}+1) & \text{ if } \e_{2m}=0\\
\check{\e}_1...\check{\e}_{2m-1}(\check{\e}_{2m}+1)0 & \text{ if } \e_{2m} \neq 0.
\end{cases}
$$
where $\check \e = s-1-\e$. Note that $\check w < v$ and  $\check v <  w$. 
\end{lemma}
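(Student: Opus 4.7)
The plan is to reduce the lemma to a single carry computation, using the identity that for any finite word $u=u_1\cdots u_n$ over $\{0,1,\dots,s-1\}$ one has
\[
.u+.\check u \;=\; \sum_{i=1}^n \frac{(s-1)}{s^i} \;=\; 1-s^{-n},
\]
since $u_i+\check u_i = s-1$ at every position. Applying this with $u=w$ of length $2m$ gives
\[
.\check w \;=\; 1-s^{-2m}-\xi, \qquad \text{hence} \qquad 1-\xi \;=\; s^{-2m}+.\check w.
\]
So the required odd expansion $v$ of $1-\xi$ is obtained by adding the ``ulp'' $s^{-2m}$ to the $s$-adic word $\check\epsilon_1\check\epsilon_2\cdots\check\epsilon_{2m}$, and then adjusting its length parity.

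The crucial use of the hypothesis is that $w$ is the \emph{shortest} even expansion of $\xi$; this rules out $\epsilon_{2m-1}=\epsilon_{2m}=0$ (otherwise the trailing block $00$ could be trimmed and the length would still be even). I would then split into the two announced cases. If $\epsilon_{2m}\neq 0$, then $\check\epsilon_{2m}=s-1-\epsilon_{2m}\leq s-2$, so adding $s^{-2m}$ produces no carry and yields the even-length word $\check\epsilon_1\cdots\check\epsilon_{2m-1}(\check\epsilon_{2m}+1)$; appending a trailing $0$ delivers the shortest odd-length representation, of length $2m+1$. If $\epsilon_{2m}=0$, then $\check\epsilon_{2m}=s-1$ triggers a carry to position $2m-1$; by the minimality of $w$ we have $\epsilon_{2m-1}\geq 1$, so $\check\epsilon_{2m-1}+1\leq s-1$ and the carry stops there, giving the word $\check\epsilon_1\cdots\check\epsilon_{2m-2}(\check\epsilon_{2m-1}+1)0$, whose trailing $0$ can be dropped to produce the shortest odd expansion of length $2m-1$.

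For the inequalities, no additional work is needed. The same complementation identity gives
\[
.\check w \;=\; 1-s^{-2m}-\xi \;<\; 1-\xi \;=\; .v,
\]
which is the assertion $\check w<v$, and
\[
.\check v \;=\; 1-s^{-|v|}-(1-\xi) \;=\; \xi-s^{-|v|} \;<\; \xi \;=\; .w,
\]
which is the assertion $\check v<w$; non-negativity of $\xi-s^{-|v|}$ follows because the shortest expansion of $\xi$ has length at most $|v|$ in both cases. The only delicate point in the whole argument is the bookkeeping of the last two digits of $w$ and of the resulting carries; once the minimality of $|w|$ is invoked to exclude the forbidden terminal $00$, the rest is a mechanical verification.
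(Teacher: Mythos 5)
Your proof is correct: the paper itself gives no argument beyond ``this is a straightforward computation,'' and your carry analysis of $1-\xi = .\check w + s^{-2m}$, with minimality of $w$ ruling out a trailing $00$, is precisely that computation, including the verification that the resulting word is the shortest odd expansion. The final inequalities also follow as you say (and, from the explicit forms of $v$ you derive, even digit-wise, which is how the paper later uses them).
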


\begin{proof} This is a straightforward computation.
\end{proof}

\qquad\begin{minipage}[h]{0.9\textwidth}{\footnotesize 
\begin{example} 
We give some examples for $s=2$ in table-form:
$$
\begin{array}{rl|rl|rl}
\xi& & \xi_R& & \xi_L& \\
\hline
& & & & & \\
\frac12 &=\ .10 & \frac23 &=\ .\overline{10} & \frac13 &=\ .\overline{01} \\[2mm]
\frac14 &=\ .01 & \frac13 &=\ .\overline{01} & \frac29 &=\ .\overline{001110} \\[2mm]
\frac{7}{32}&=\ .001110 & \frac29 &=\ .\overline{001110} & \frac{7}{33} &=\ .\overline{0011011001} \\[2mm]
 \frac{3}{16}&=\ .0011 & \frac15 &=\ .\overline{0011} & \frac{2}{11} &=\ .\overline{0010111010} \\[2mm]
\frac{9}{64} & =\ .001001 & \frac17& =\ .\overline{001} & \frac{4334}{16383} & =\ .\overline{00100011101110}\\[2mm]
\frac18&=\ .0010 & \frac{2}{15} &=\ .\overline{0010} & \frac19 &=\ .\overline{000111} 
\end{array}
$$
The penultimate example in this table shows that right endpoint $\xi_R$ can have a minimal 
period shorter than the length of $\xi$.
In all of the above examples, the endpoints belong to the exceptional set, but this need not be the case, 
for instance if 
$\xi=5/16=.0101$ then $\xi_R= .\overline{0101}=5/15=1/3$ but $\xi_L= .\overline{01000110110} \notin \EE$. 
However, $I_{5/16} \subset I_{1/4}$; this follows from a general rule that we shall explain in 
Corollary~\ref{cor:EE} below.
\end{example}}
\end{minipage}

\begin{lemma}\label{L:ixi}
If $\xi \in \QS$ then $I_\xi \cap \EE=\emptyset$. 
%all $\gamma \in I_\xi$ satisfy the strong matching condition. 
Therefore 
$$
\left[0,\frac{s}{s+1}\right]\setminus\EE \supset \bigcup_{\xi \in \QS} I_\xi.
$$
\end{lemma}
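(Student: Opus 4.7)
The plan is to invoke Proposition~\ref{P:HB}, which characterizes $\EE$ as the set of $\gamma$ whose entire $g$-orbit lies weakly above $\gamma$. Thus it suffices to produce, for every $\gamma \in I_\xi=(\xi_L,\xi_R)$, some iterate $k$ with $g^k(\gamma)<\gamma$. The key structural observation is that $g(x) = s(1-x) \bmod 1 = -sx \bmod 1$ factors as $g = F\circ \sigma$, where $\sigma(x) = sx \bmod 1$ is the standard one-sided shift in base $s$ and $F(x) = 1-x$ is the reflection. These two operations commute (both act on an $s$-adic expansion as ``drop the leading digit and flip all remaining digits via $\e\mapsto s-1-\e$''), and $F^2=\mathrm{id}$, so $g^{k} = \sigma^{k}$ when $k$ is even and $g^{k}=F\circ \sigma^{k}$ when $k$ is odd.

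Set $n:=|w|=2m$ and $\ell:=|v|$ (odd). The right side is easy: for $\gamma\in[\xi,\xi_R)$ the first $n$ digits of $\gamma$'s standard base-$s$ expansion coincide with $\e_1\cdots\e_{2m}$, so $g^n(\gamma)=\sigma^n(\gamma)=s^n(\gamma-\xi)$. Combined with the identity $\xi_R = s^n\xi/(s^n-1)$ (immediate from $\xi_R=.\overline{w}$), this yields $g^n(\gamma)<\gamma \iff \gamma<\xi_R$; the case $\gamma=\xi$ is already settled since $g^n(\xi)=0$.

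For the left, take $\gamma\in(\xi_L,\xi)$. I would prove the mirror formula $g^\ell(\gamma)=s^\ell(\xi-\gamma)$, after which the comparison $g^\ell(\gamma)<\gamma$ becomes $\gamma>s^\ell\xi/(s^\ell+1)$, and it remains to recognize the right-hand side as $\xi_L$. Since $v$ is the shortest odd $s$-adic expansion of $1-\xi$ one has $v_{\mathrm{int}}=s^\ell(1-\xi)$ and hence $\check v_{\mathrm{int}}=(s^\ell-1)-v_{\mathrm{int}}=s^\ell\xi-1$; plugging these into the closed form $\xi_L = (\check v_{\mathrm{int}}\, s^\ell+v_{\mathrm{int}})/(s^{2\ell}-1)$ telescopes cleanly to $\xi_L=s^\ell\xi/(s^\ell+1)$. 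To evaluate $g^\ell=F\sigma^\ell$ as an affine formula one also needs $\lfloor s^\ell\gamma\rfloor=s^\ell\xi-1$, which holds throughout $(\xi-s^{-\ell},\xi)$; the inclusion $(\xi_L,\xi)\subset(\xi-s^{-\ell},\xi)$ reduces to the trivial inequality $\xi<1+s^{-\ell}$. With these in place, $g^\ell(\gamma)=1-(s^\ell\gamma-(s^\ell\xi-1))=s^\ell(\xi-\gamma)$, as required.

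The main place needing care is the left-side bookkeeping: by Lemma~\ref{L:reloaded}, $\ell$ equals $2m+1$ when $\e_{2m}\ne 0$ and $2m-1$ when $\e_{2m}=0$, but the argument above is uniform in $\ell$ because it only uses the defining relation $v_{\mathrm{int}}=s^\ell(1-\xi)$, so no genuine case split arises. Combining both sides gives, for every $\gamma\in(\xi_L,\xi_R)$, an iterate $k\in\{n,\ell\}$ with $g^k(\gamma)<\gamma$; Proposition~\ref{P:HB} then yields $I_\xi\cap\EE=\emptyset$, and the union statement in the lemma follows immediately.
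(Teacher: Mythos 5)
Your proof is correct and follows essentially the same route as the paper: both reduce to Proposition~\ref{P:HB}, use the parity decomposition of $g^k$ into the shift $f^k$ (composed with $\tau(x)=1-x$ when $k$ is odd), and treat $[\xi,\xi_R)$ with the iterate $k=|w|$ and $(\xi_L,\xi)$ with $k=|v|$. The only difference is presentational: where the paper argues qualitatively that $g^{|w|}$ is an expanding orientation-preserving branch fixing $\xi_R$ and $g^{|v|}$ maps $(\xi_L,\xi)$ onto $(0,\xi_L)$ reversing orientation, you write out the same branches as explicit affine formulas and the endpoints in closed form, which is a quantitative rendering of the identical argument.
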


%{\bf Proof:}
\begin{proof}
Let us consider the expanding map $f(x)=sx  \pmod 1$ and the involution $\tau(x):=1-x$; it is easy to 
check that $f$ and $\tau$ commute. Moreover,
\begin{equation}\label{eq:oddm}
g^m= 
\begin{cases}
f^m & \text{ if $m$ is even,} \\
f^m\circ \tau & \text{ if $m$ is odd.}
\end{cases}
\end{equation}
Let us first assume $\gamma\in [\xi,
\xi_R)$. Let $\xi_R:=.\overline{w}$ and let $n$ denote the
length of $w$. Since $n$ is even, $g^n: \ [\xi, \xi_R] \to  [0, \xi_R]$ is a continuous orientation 
preserving expansive map, and $g^n(\xi_R)=\xi_R$.
Therefore $g^n(\gamma)<\gamma$ for all $\gamma \in [\xi,
\xi_R)$, and hence $\gamma \notin \EE$.
If  $\gamma\in (\xi_L, \xi)$, let $m:=|v|$.  Since $m$ is odd,
it follows by equation \eqref{eq:oddm} that 
$g^m:(\xi_L,\xi) \stackrel{\sim}{\longrightarrow}(0,\xi_L)$ is an orientation reversing homeomorphism, hence
$g^m(\gamma)<g^m(\xi_L)=\xi_L<\gamma$ and  
$\gamma \notin \EE$.
\end{proof}

\begin{proposition}\label{P:maximal}
Let $J=(c, d)$ be a connected component of $[0,s/(s+1)]\setminus \EE$.
Then there is a unique $s$-adic $\xi\in J\cap \QS$ of minimal denominator.
% $\xi \in \QS$ be  the pseudocenter of  $J$. 
Moreover 
\begin{enumerate}
\item[(i)]
 $c=\xi_L$, $d=\xi_R$ (\ie not only $I_\xi \cap \EE=\emptyset$ 
 but $I_\xi$ is maximal with respect to this property, since  $I_\xi=J$);
%$J= I_\xi$ (\ie$I_\xi$ is maximal and $\xi_L$ and $\xi_R$ belong to $\EE$ );
\item[(ii)]
$g^k(\xi) \notin (0,d)$ for all $k \geq 1$.
 \item[(iii)]
If $\xi'\in \QS\cap (c, d)$ is such that $ g^k(\xi') \notin (0,\xi')$
for all $k \geq 1$, then $\xi'=\xi$.
\end{enumerate}
\end{proposition}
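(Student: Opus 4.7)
Density of $\QS$ in $[0,1]$ together with openness of $J=(c,d)$ guarantees $J\cap\QS\ne\emptyset$; choose $\xi$ in this set of minimal denominator $s^n$, and let $w$ denote its shortest even $s$-adic expansion, so that the interval $I_\xi=(\xi_L,\xi_R)$ of Definition~\ref{D:thickening} is well defined. Lemma~\ref{L:ixi} yields $I_\xi\subseteq J$, hence $c\le\xi_L<\xi_R\le d$. A preliminary observation used throughout: since $\EE=\{x:g^k(x)\ge x\;\forall k\in\N\}$ by Proposition~\ref{P:HB}, and any positive element of $\QS$ has a $g$-orbit that eventually reaches $0$, no positive $s$-adic rational lies in $\EE$; hence $c\notin\QS$ whenever $c>0$, and $d\notin\QS$ whenever $d<1$.

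\textbf{Property (ii).} The claim is $g^k(\xi)\notin(0,d)$ for all $k\ge 1$. Using $g=f\circ\tau$ from \eqref{eq:oddm} with $f(x)=sx\bmod 1$ and $\tau(x)=1-x$, one verifies that for $1\le k<n$ the iterate $g^k(\xi)$ is a positive $s$-adic rational with denominator strictly smaller than $s^n$, and $g^n(\xi)=0$. By minimality of $\xi$ in $J\cap\QS$, each such $g^k(\xi)$ lies outside $J$, so $g^k(\xi)\in[0,c]\cup[d,1]$; combined with the preliminary observation, $g^k(\xi)\ne c$. To exclude $g^k(\xi)\in(0,c)$, let $k^*$ be the first such index; the orbit of $\xi$ up to time $k^*$ avoids all intermediate singularities of $g$, so a single monotone affine branch of $g^{k^*}$ through $\xi$ is defined, and pulling $0$ back along this branch produces an $s$-adic $\xi'$ at distance $g^{k^*}(\xi)/s^{k^*}$ from $\xi$ of denominator at most $s^{k^*}<s^n$; verifying $\xi'\in I_\xi\subseteq J$ contradicts the minimality of $\xi$.

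\textbf{Property (i).} Property (ii) translates into a combinatorial lex-minimality condition on $w$: each shift $g^k(\xi)$, viewed as a terminating $s$-adic number, compares lex-$\ge$ the periodic expansion $\xi_R=.\overline{w}$. Passing from terminating to periodic extensions, this yields $g^k(\xi_R)\ge\xi_R$ for every $k\ge 0$, so by Proposition~\ref{P:HB}, $\xi_R\in\EE$; combined with $\xi_R\le d$ this forces $\xi_R=d$. A symmetric argument using the odd word $v$ of Definition~\ref{D:thickening} and the orientation-reversing iterate $g^{|v|}$ on $(\xi_L,\xi)$ (already appearing in the proof of Lemma~\ref{L:ixi}) yields $\xi_L=c$.

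\textbf{Property (iii) and uniqueness.} If $\xi'\in J\cap\QS$ satisfies $g^k(\xi')\notin(0,\xi')$ for all $k\ge 1$, the lex-minimality argument of (i) applied to $\xi'$ puts both endpoints of $I_{\xi'}$ in $\EE$, so $I_{\xi'}$ is a connected component of $[0,s/(s+1)]\setminus\EE$ containing $\xi'\in J$; hence $I_{\xi'}=J$. Lemma~\ref{L:reloaded} then reconstructs the pseudocenter uniquely from the endpoints of $J$, forcing $\xi'=\xi$. Finally, any minimal-denominator rational in $J\cap\QS$ satisfies (ii) and hence the hypothesis of (iii), giving the uniqueness claim of the proposition. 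The main obstacle is the preimage step inside (ii) needed to exclude $g^k(\xi)\in(0,c)$; the remainder is systematic bookkeeping with Definition~\ref{D:thickening}, Proposition~\ref{P:HB}, and Lemmas~\ref{L:ixi} and~\ref{L:reloaded}.
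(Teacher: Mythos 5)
Your overall plan (choose the minimal–denominator $\xi$, prove (ii) from minimality, then transfer (ii) combinatorially to the endpoints $\xi_L,\xi_R$) is genuinely different from the paper's, but both of its pivotal steps are asserted rather than proved, and in each case the missing ingredient is the one the paper's proof is built on: that the endpoints $c,d$ of $J$ themselves lie in $\EE$. In (ii), to exclude $g^{k^*}(\xi)\in(0,c)$ you pull $0$ back along the affine branch of $g^{k^*}$ through $\xi$, obtaining an $s$-adic $\xi'$ with $|\xi-\xi'|=g^{k^*}(\xi)/s^{k^*}$ and denominator at most $s^{k^*}<s^n$, and then say that ``verifying $\xi'\in I_\xi\subseteq J$'' gives the contradiction. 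But that containment cannot be verified from your distance estimate: the two gaps of $I_\xi$ around $\xi$ have widths $\xi-\xi_L=\xi/(s^{|v|}+1)$ and $\xi_R-\xi=\xi/(s^{|w|}-1)$, both smaller than $s^{-n}$, whereas every $s$-adic of denominator at most $s^{n-1}$ is at distance at least $s^{-n}$ from $\xi$; so $I_\xi$ never contains an $s$-adic of smaller denominator, and $|\xi-\xi'|$ can be of order $s^{-k^*}\gg s^{-n}$. What is true and sufficient is $\xi'\in J$, but proving it needs the endpoints: $\xi'$ is an endpoint of the rank-$k^*$ cylinder on which $g^{k^*}$ is affine, and if $\xi'$ lay beyond $c$ (resp.\ beyond $d$) then either $c$ (resp.\ $d$) would be interior to that cylinder with $g^{k^*}(c)<c$ (resp.\ $g^{k^*}(d)<g^{k^*}(\xi)<c<d$), contradicting $c,d\in\EE$, or it would equal $\xi'$, a positive $s$-adic, which cannot lie in $\EE$. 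This endpoint/monotonicity mechanism is exactly the paper's inequality $g^k(x)\ge x$ on all of $[c,d]$ for $k\le h_0$ (the last iterate continuous on $[c,d]$), and it appears nowhere in your argument.

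The same problem recurs, more seriously, in your step (i). The transfer from (ii) to ``$g^k(\xi_R)\ge\xi_R$ for all $k$'' by passing from terminating to periodic expansions fails for odd $k$: there $\xi$ and $\xi_R$ lie in the same branch of $g^k$, which is orientation reversing, so $g^k(\xi_R)=g^k(\xi)-s^{-(|w|-k)}\xi_R<g^k(\xi)$, and knowing only $g^k(\xi)\ge d\ge\xi_R$ does not yield $g^k(\xi_R)\ge\xi_R$ (you would need $g^k(\xi)\ge(1+s^{-(|w|-k)})\xi_R$, which (ii) does not give). The paper closes precisely these odd iterates using the orbit of $d$, not of $\xi$: continuity of $g^k$ on $[c,d]$ together with $g^k(d)\ge d$ gives $g^k(x)\ge x$ for every $x\in[c,d]$, with a separate orientation argument at $k=h_0+1$; a symmetric issue affects your treatment of $\xi_L$. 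The gap propagates into your (iii), which invokes the same unproved transfer for $\xi'$, and into uniqueness: Lemma~\ref{L:reloaded} does not ``reconstruct the pseudocenter from the endpoints of $J$'' (for $s=2$, $\xi=1/4$ and $\xi=5/16$ share the right endpoint $1/3$), so that step needs an actual argument — the paper gets uniqueness directly from the existence of a point $p$ with $g^{h_0+1}(p)=d$, which would lie in $\EE\cap J$. In short, the skeleton is appealing, but the two places you label as bookkeeping are exactly where the membership $c,d\in\EE$ must be brought to bear, and without it the proof does not go through.
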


\begin{definition}
If $J$ is a connected component of $[0,s/(s+1)]\setminus \EE$, the  unique $s$-adic 
$\xi\in \QS \cap J$ of minimal denominator will be called the {\bf pseudocenter} of $J$.
We will denote by $\QMAX \subset \QS$  the set of
pseudocenters of components $J$ of $[0,s/(s+1)]\setminus \EE$.
\end{definition}

Pseudocenters provide a convenient way of labeling the connected components 
of $[0,s/(s+1)]\setminus \EE$. Indeed 
as a corollary of Lemma~\ref{L:ixi} and Proposition~\ref{P:maximal} we get
\begin{corollary}\label{cor:EE}
\begin{equation}
[0,\frac{s}{s+1}]\setminus\EE = \bigcup_{\xi \in \QMAX} I_\xi.
\end{equation}
\end{corollary}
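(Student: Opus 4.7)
The corollary follows by combining Lemma~\ref{L:ixi} and Proposition~\ref{P:maximal}, and I would establish the two inclusions separately.

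For $\bigcup_{\xi \in \QMAX} I_\xi \subset [0,s/(s+1)]\setminus\EE$: since $\QMAX \subset \QS$, every $\xi \in \QMAX$ satisfies the hypothesis of Lemma~\ref{L:ixi}, which gives $I_\xi \cap \EE = \emptyset$. The containment $I_\xi \subset [0,s/(s+1)]$ comes for free from Proposition~\ref{P:maximal}(i), which identifies $I_\xi$ with a connected component of $[0,s/(s+1)]\setminus \EE$.

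For the reverse inclusion $[0,s/(s+1)]\setminus\EE \subset \bigcup_{\xi \in \QMAX} I_\xi$, I would pick an arbitrary $\gamma \in [0,s/(s+1)] \setminus \EE$. Since $\EE$ is closed (being the complement of the open matching set, or equivalently by the closed-condition description \eqref{EE}), the set $[0,s/(s+1)] \setminus \EE$ is relatively open. Moreover the endpoints of $[0,s/(s+1)]$ both lie in $\EE$: indeed $0 \in \EE$ trivially from \eqref{EE}, and $s/(s+1)$ is a fixed point of $g$ (since $g(s/(s+1)) = s \cdot 1/(s+1) = s/(s+1)$), so $g^k(s/(s+1)) = s/(s+1)$ for every $k \in \N$ and condition \eqref{EE} holds. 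Consequently $\gamma$ lies in some open connected component $J \subset (0,s/(s+1))$ of $[0,s/(s+1)] \setminus \EE$. Proposition~\ref{P:maximal} then furnishes a unique pseudocenter $\xi \in \QMAX \cap J$ such that $J = I_\xi$, so $\gamma \in I_\xi$ with $\xi \in \QMAX$, as desired.

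The genuine mathematical content has already been carried out in Lemma~\ref{L:ixi} (each $I_\xi$ avoids the bifurcation set) and Proposition~\ref{P:maximal} (each component of $[0,s/(s+1)]\setminus \EE$ possesses a unique minimal-denominator $s$-adic pseudocenter whose endpoints are given by the $s$-adic formulas of Definition~\ref{D:thickening}); the corollary is essentially a bookkeeping assembly of these two ingredients, the only minor verification being that the boundary points of $[0,s/(s+1)]$ lie in $\EE$. Accordingly, I do not anticipate any substantive obstacle in this step.
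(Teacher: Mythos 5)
Your argument is correct and is essentially the paper's: the corollary is obtained there exactly as you do, by combining Lemma~\ref{L:ixi} with Proposition~\ref{P:maximal}, which identifies each component of $[0,\frac{s}{s+1}]\setminus\EE$ with $I_\xi$ for its unique pseudocenter $\xi\in\QMAX$. Your only addition is the (harmless and correct) verification via \eqref{EE} that $0$ and $\frac{s}{s+1}$ lie in $\EE$, so that every component is an open interval of the type covered by Proposition~\ref{P:maximal}.
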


\begin{proof}
 Let $h_0:=\max \{k\in \N \ : \ g^k_{|_{[c,d]}} \mbox{ is continuous} \}$. 
We claim that if $0\leq k \leq h_0$ then
\begin{equation}\label{eq:*}
g^k(x) \geq x \ \ \ \forall x \in [c,d].
% g^k(x) < x \forall x \in ]c,d[.  note: this is also true, if k>0
\end{equation}
Indeed, if $k$ is odd then $g^k_{|_{[c,d]}}$ is a continuous orientation reversing map and
$$x\leq b \leq g^k(b) \leq g^k(x) \ \ \ \forall x\in [c,d].$$
Conversely, if $k$ is even then $g^k_{|_{[c,d]}}$ is a continuous orientation preserving expanding map, so
$g^k(x)- g^k(c) > x-c$ hence $g^k(x)-x> g^k(c)-c\geq 0$, and $g^k(x)>x$.

Let us consider the set $Z:=\{\frac{1}{s}, \frac{2}{s}, ..., \frac{s-1}{s}\}$ of discontinuity points of $g$, and let us point out that  $g^{h_0}([c,d]) \subset (0,1)$ but
 $Z \cap g^{h_0}([c,d])\neq \emptyset $ (by maximality of $h_0$).
 Now, if in $J$ there were more than one $s$-adic rational with minimal denominator, we can find a couple $\xi, \xi' \in \QS $
 such that $g^{h_0}(\xi),g^{h_0}(\xi')\in Z$ and $g^{h_0+1}:(\xi,\xi') \stackrel{\sim}{\longrightarrow}(0,1)$. In particular there exists $p\in J$ such that $g^{h_0+1}(p)=d$, whence $g^k(p)\geq p \ \ \ \forall k\geq 0$
 which is a contradiction.
 Therefore  there is a unique $\xi \in \QS$ if minimal denominator
in $J$. Moreover, $\xi$ is the unique discontinuity of   $g^{h_0+1}_{|_{[c,d]}}$ 
(by minimality of the pseudocenter).
 Thus  $g^{h_0+1}_{|_{[c,\xi[}}$ and $g^{h_0+1}_{|_{]\xi,d]}}$ are continuous as well.
 
Now let us consider $
\xi_L:=.\overline{\check{v}v}, \ \ \ \xi_R:=.\overline{w}$ (where $w,v \in \{0,1\}^*$, $\xi=.w$, $1-\xi=.v$,  $|w|$ even, $|v|$ odd). 

Since, by Lemma~\ref{L:ixi}, $[\xi_L,\xi_R] \subset [c,d]$,
 in order to prove {\bf (i)} it is enough to check that $\xi_L$ and $\xi_R$ both belong to $\EE$, 
 \ie they satisfy
\begin{equation}\label{eq:**}
 g^k(\xi)\geq \xi \ \ \ \forall k \in \N.
\end{equation}
We split the discussion into two cases.

{\bf [$h_0$ is even]} %(\ie $w $ ends with the digit 0).
then
$$ |w|=h_0+2, \ \ \ |v|=h_0+1.$$
By \eqref{eq:*} $g^k(\xi_L) \geq \xi_L$ for $k\leq  h_0$, and since $g^{h_0+1}(\xi_L)=\xi_L$, 
\eqref{eq:**} holds for $\xi_L$.

Also $\xi_R$ is periodic: $g^{h_0+2}(\xi_R)=\xi_R$. Thus we only have to check that $g^k(\xi_R)\geq \xi_R$ 
for $k\leq h_0+1$.
The range $k\leq h_0$ is covered by \eqref{eq:*}; on the other hand since $g^{h_0+1}$ 
is continuous and orientation reversing on $]\xi, d]$ we get
$$g^{h_0+1}(\xi_R) \geq g^{h_0+1} (b) \geq d \geq \xi_R.$$

{\bf [$h_0$ is odd]} %(\ie $w $ ends with the digit 1).
then 
$$ |w|=h_0+1, \ \ \ |v|=h_0+2.$$
By \eqref{eq:*} $g^k(\xi_R) \geq \xi_R$ for $k\leq  h_0$, and since $g^{h_0+1}(\xi_R)=\xi_R$, 
\eqref{eq:**} holds for $\xi_R$.

Also $\xi_L$ is periodic: $g^{h_0+2}(\xi_L)=\xi_L$. Thus we only have to check that $g^k(\xi_L)\geq \xi_L$ for $k\leq h_0+1$.
The range $k\leq h_0$ is covered by \eqref{eq:*}; on the other hand since $g^{h_0+1}$ is a 
continuous and orientation preserving map on $[a, \xi[$ we get
$g^{h_0+1}(\xi_L) - g^{h_0+1}(c)\geq \xi_L -a$ hence
$$g^{h_0+1}(\xi_L) - \xi_L \geq  g^{h_0+1}(c) - c \geq 0$$
\ie $g^{h_0+1}(\xi_L) \geq \xi_L$ and we are done.

In order {\bf to prove (ii)} we first point out that $g_k(\xi) \notin (c,d)$ for all $k \geq 1$. 
On the other hand $g^k(\xi)=0$ 
for all $k\geq h_0+1$  while, by equation \eqref{eq:*},
 $g^k(\xi)\geq \xi$ for $k\leq h_0$; thus $g^k(\xi) \notin (0,d)$ for all $k\geq 1$.

Let us prove {\bf  (iii).} Note that if $\xi' \in (\xi_L,\xi)$ then $g^{|v|}(\xi')\in (0,\xi_L)$ thus 
$$ 0 < g^{|v|}(\xi')< \xi_L< \xi'.$$
On the other hand, if $\xi' \in (\xi,\xi_R)$   then %$g^{|w|}(\xi')\in (0,\xi_R)$ thus
$\xi_R -\xi'< g^{|w|}(\xi_R)- g^{|w|}(\xi')$ so $$0= g^{|w|}(\xi_R)- \xi_R> g^{|w|}(\xi') -\xi',$$ 
\ie $g(\xi')<\xi'$, and we are done.
\end{proof}

\begin{corollary}\label{C:characterization}
Let $\xi\in \QS \cap (0,s/(s+1))$. Then
$\xi \in \QMAX$ if and only if
$g^k(\xi) \notin (0,\xi) \ \forall k \geq 1$.
\end{corollary}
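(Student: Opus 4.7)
My plan is to derive both directions of the equivalence directly from Proposition~\ref{P:maximal}, essentially by repackaging its items (ii) and (iii).

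For the forward implication I will suppose $\xi$ is the pseudocenter of some component $J=(c,d)$ of $[0,s/(s+1)]\setminus\EE$, so that $\xi\in J$ and in particular $\xi<d$. Proposition~\ref{P:maximal}(ii) gives $g^k(\xi)\notin(0,d)$ for all $k\geq 1$, and since $(0,\xi)\subset(0,d)$ the desired weaker statement $g^k(\xi)\notin(0,\xi)$ follows at once.

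For the reverse implication I will start from $\xi\in\QS\cap(0,s/(s+1))$ satisfying $g^k(\xi)\notin(0,\xi)$ for every $k\geq 1$. The first step is the observation that $\xi\notin\EE$: since $\xi$ is an $s$-adic rational, there exists $k_0\geq 1$ with $g^{k_0}(\xi)=0<\xi$, which violates the defining property $g^k(\xi)\geq \xi$ of $\EE$ recorded in \eqref{EE}. Hence $\xi$ lies in some connected component $J=(c,d)$ of $[0,s/(s+1)]\setminus\EE$; let $\eta$ denote its pseudocenter. The hypothesis on $\xi$ is then exactly the condition that Proposition~\ref{P:maximal}(iii) singles out as characterizing $\eta$ uniquely among elements of $\QS\cap(c,d)$, so applying (iii) with $\xi'=\xi$ yields $\xi=\eta\in\QMAX$.

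There is no real obstacle here, since the substantial work is already contained in Proposition~\ref{P:maximal}. The only step not read off verbatim from that proposition is the preliminary observation that $\xi\notin\EE$, and this is immediate because the $g$-orbit of any $s$-adic rational eventually reaches $0$, which then lies strictly below $\xi>0$.
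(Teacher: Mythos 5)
Your proof is correct and follows essentially the same route as the paper: the forward direction is read off from Proposition~\ref{P:maximal}(ii) (weakening $(0,d)$ to $(0,\xi)$) and the reverse direction from Proposition~\ref{P:maximal}(iii). The only difference is that you explicitly justify $\xi\notin\EE$ via the eventual vanishing of the $g$-orbit of an $s$-adic rational and the characterization \eqref{EE}, a point the paper leaves implicit since it was already noted that $s$-adic rationals lie in the matching set.
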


\begin{proof}
 The implication $\Rightarrow$ is just Proposition~\ref{P:maximal}-(ii). 
On the other hand, if $\xi\in \QS\cap (0,s/(s+1))$ 
 then $\xi$  belongs to some component $(c,d)$ of  $[0,s/(s+1)]\setminus\EE$.  
Thus $g^k(\xi) \notin (0,\xi) \ \forall k \geq 1$ implies, by Proposition~\ref{P:maximal}-(iii),
 that $\xi$ is the pseudocenter of $(c,d)$.
\end{proof}
\subsection{Period doubling}
Another interesting consequence of the above characterization is the following:
\begin{corollary}\label{C:period-doubling}
Let $\xi\in \QMAX \cap (0,s/(s+1))$ and let $I_\xi=(\xi_L, \xi_R)$ with $\xi_L=.\overline{\check{v}v}$. 
Then $\xi':=.\check{v}v \in \QMAX$ as well.
\end{corollary}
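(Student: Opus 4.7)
The plan is to verify the condition of Corollary~\ref{C:characterization}: that $\xi' \in \QS \cap (0, s/(s+1))$ and $g^k(\xi') \notin (0, \xi')$ for every $k \geq 1$. The first statement is immediate, since $\xi' = 0.\check v v$ is $s$-adic and $0 < \xi' < \xi_L \leq s/(s+1)$. For the second, note that since $\xi'$ has a finite $s$-adic expansion of length at most $n := 2|v|$, the orbit of $\xi'$ under $g$ eventually reaches $0$, so only the range $1 \leq k \leq n-1$ needs to be examined.

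The heart of the argument is a lexicographic comparison of the $s$-adic expansion of $g^k(\xi')$ with that of $\xi'$, leveraging $\xi_L \in \EE$ which, via \eqref{eq:oddm}, yields lex inequalities on the periodic expansion $0.\overline{\check v v}$ of $\xi_L$. Setting $u := \check v v$, one has $\xi' = 0.u\,0^\infty$ while $\xi_L = 0.\overline u$; for $k$ even, $g^k(\xi') = f^k(\xi')$ is the shift of the expansion of $\xi'$ by $k$ positions, and for $k$ odd, $g^k(\xi') = f^k(1-\xi')$ is the shift of the expansion of $1-\xi'$ (the digit-wise complement of $u$ with a $+1$ on the last nonzero entry). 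In both parities the first $n-k$ (respectively $n-k-1$) digits of $g^k(\xi')$ coincide with those of $g^k(\xi_L)$, so the lex inequalities extracted from $\xi_L \in \EE$ already govern the matched prefix.

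For $k$ odd this is enough: a strict discrepancy inside the matched prefix would contradict $g^k(\xi_L) \geq \xi_L$; if equality persists up to position $n-k$, then the $+1$ on the last digit of $1-\xi'$ combined with the forced bound $\check u_n \geq u_{n-k}$ rules out $g^k(\xi') < \xi'$, and that same forced bound also makes it impossible for the first discrepancy to occur only beyond position $n-k$. Hence $g^k(\xi') \geq \xi'$ in every subcase when $k$ is odd.

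For $k$ even the analysis succeeds except in one subcase, which is the main obstacle: if $u_{k+i}=u_i$ for $i=1,\ldots,n-k$ and some $u_j \neq 0$ for $j \in (n-k, n]$, then $f^k(\xi') < \xi'$. To rule this out I plan to invoke the hypothesis $\xi \in \QMAX$: the shift-invariance $u_{k+i}=u_i$ forces a rigid internal symmetry on $v$ (namely $v_{i+k}=v_i$ where the shift stays in a single half of $u = \check v v$, and a complementarity $v_i + v_{k+i-|v|} = s-1$ where the shift crosses the halves); plugging this into the explicit formulas $g^{2j+1}(\xi) = 0.v_{2j+2} \cdots v_{|v|}$ and $g^{2j}(\xi) = 0.\check v_{2j+1} \cdots \check v_{|v|-1}(\check v_{|v|}+1)$ for the orbit of $\xi$, one produces an iterate $g^j(\xi) \in (0, \xi)$, contradicting $\xi \in \QMAX$. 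Combining both parities with Corollary~\ref{C:characterization} then yields $\xi' \in \QMAX$.
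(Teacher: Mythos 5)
Your plan is correct, and the one step you leave at the level of ``one produces an iterate'' does close: in the bad even case, with $m=|v|$ and $t=|k-m|$ (which is odd since $k$ is even and $m$ is odd), the crossing relation you isolate reads $v_{t+i}=\check v_i$ for $1\le i\le m-t$, so the finite expansion $g^{t}(\xi)=.v_{t+1}\cdots v_m$ agrees with the odd expansion $\xi=.\check v_1\cdots\check v_{m-1}(\check v_m+1)$ in its first $m-t$ digits, while $\xi$ still carries the nonzero digit $\check v_m+1$ beyond position $m-t$ and $v_m\neq0$; hence $g^{t}(\xi)\in(0,\xi)$, contradicting $\xi\in\QMAX$ by Corollary~\ref{C:characterization} (only the complementarity across the two halves is needed, not the in-half periodicity). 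Your route shares the paper's skeleton --- reduce to Corollary~\ref{C:characterization} and use exactly the two inputs $\xi_L\in\EE$ and $\xi\in\QMAX$ --- but the execution is genuinely different. The paper disposes of all $k\ge m$ at once via the identity $g^m(\xi')=\xi$, and for $k<m$ it upgrades $g^k(\xi_L)\ge\xi_L$ to $g^k(\xi_L)\ge\xi$ by excluding $g^k(\xi_L)=\xi_L$ (which would force $g^k(\xi)\in(0,\xi)$) and $g^k(\xi_L)\in(\xi_L,\xi]$ (apply the order-reversing $g^m$ and use $\xi_L\in\EE$), after which a short comparison of the common suffix with $w$ finishes. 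You instead run a parity-split lexicographic analysis over all $1\le k\le 2m-1$ and resolve the single obstinate case through the forced symmetry of $v$; this is somewhat longer, since you do not exploit $g^m(\xi')=\xi$, but it stays entirely at the level of word combinatorics and makes explicit which degeneracy of $\check v v$ would have to occur for the statement to fail.
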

In other words, on the left of any matching interval there is an adjacent matching interval, 
hence there is a sequence of adjacent matching intervals. We shall refer to this phenomenon as 
{\em period doubling bifurcation}, in analogy with period
doubling bifurcations in the quadratic family $z \mapsto z^2+c$.
Using Lemma~\ref{L:reloaded} one can easily check that the first few elements of the period doubling 
cascade are as follows:
\begin{equation}\label{eq:period-doubling}
 \begin{array}{l}
\xi_0=.w\\
\xi_1=.\check{v}v\\
\xi_2=.\check{v}\check{w}vw\\
\xi_3=.\check{v}\check{w}v\check{v}vw\check{v}v\\
\xi_4=.\check{v}\check{w}v\check{v}vw\check{v}\check{w}vw\check{v}v\check{v} \check{w} vw 
\end{array} 
\end{equation}

This period doubling phenomenon  is just a particular case of tuning, we shall come back to it later 
on (see Proposition~\ref{P:period-doubling}).
A period-doubling cascade can also be described in terms of a substitution operator.
\begin{lemma}
Consider the substitution
$$
\chi:\begin{array}{ll}
w \mapsto \check v v \qquad &
\check w \mapsto v \check v  \\
v \mapsto vw &
\check v \mapsto \check v  \check w 
\end{array}.
$$
If $\xi$ is a pseudocenter with even $s$-adic expansion $w$, then 
the $s$-adic code of the
pseudocenter of the period doubled matching interval adjacent to
$I_\xi$ is $\chi(w)$.
Continuing this way, we find the $s$-adic codes of the
pseudocenter of the matching interval in the cascade with seed $\xi$.
\end{lemma}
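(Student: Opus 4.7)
The first assertion is immediate from Corollary~\ref{C:period-doubling}: if $\xi$ is a pseudocenter with shortest even $s$-adic expansion $w$ and $v$ is the shortest odd expansion of $1-\xi$, then the pseudocenter of the adjacent period doubled matching interval on the left is $.\check v v$, which is exactly $\chi(w)$ once we read the symbols on the right-hand side of $\chi$ as the four seed strings $w,\check w,v,\check v$ associated to $\xi$.

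For the iterated cascade, I would prove by induction on $k\ge 0$ the slightly stronger statement that the shortest even $s$-adic expansion of the $k$-th pseudocenter $\xi_k$ is the base-$s$ word $\chi^k(w)$ and the shortest odd expansion of $1-\xi_k$ is $\chi^k(v)$. The base case $k=0$ is the hypothesis. The key structural observation, checked directly from the definition of $\chi$, is that $\chi$ commutes with the bit-flip involution:
\begin{equation*}
\chi(\check w)=\check{\chi(w)}=v\check v,\qquad \chi(\check v)=\check{\chi(v)}=\check v\check w.
\end{equation*}
This is precisely what makes the recursion close.

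For the inductive step, let $V_k:=\chi^k(v)$ and $W_k:=\chi^k(w)$ (as base-$s$ words). Applying Corollary~\ref{C:period-doubling} to $\xi_k$ gives that $\xi_{k+1}$ has expansion $.\check{V_k}V_k$, and the bit-flip commutation immediately rewrites this as
\begin{equation*}
\check{V_k}V_k=\chi^k(\check v)\chi^k(v)=\chi^k(\chi(w))=\chi^{k+1}(w),
\end{equation*}
which is the desired expansion of $\xi_{k+1}$. A parallel calculation, this time using Lemma~\ref{L:reloaded} applied to $\xi_{k+1}$, shows that the shortest odd expansion of $1-\xi_{k+1}$ is the concatenation $V_kW_k=\chi^k(v)\chi^k(w)=\chi^{k+1}(v)$.

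\textbf{Main obstacle.} The algebraic step above is clean, but one still has to verify that the expansions produced are genuinely the \emph{shortest} even/odd ones throughout the induction, rather than representations hiding a cancellable trailing zero. This reduces to a short case analysis on whether the final digit of $V_k$ is zero or not, and uses the formulas of Lemma~\ref{L:reloaded} together with the inductive description of $W_k$ in terms of $V_k$ for $\xi_k$. In each case one checks that the natural (non-zero-terminating) expansion of $\xi_{k+1}$ has length either $2|V_k|$ or $2|V_k|-1$, and that in both situations $\check{V_k}V_k$ has exactly the minimum even length; and symmetrically that $V_kW_k$ reproduces exactly the recipe of Lemma~\ref{L:reloaded} for the shortest odd expansion of $1-\xi_{k+1}$. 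Once this bookkeeping is in place the induction closes and the cascade is encoded by $\chi^k(w)$ for all $k\ge 0$.
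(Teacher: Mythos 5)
Your proposal is correct, and it is worth noting that the paper itself states this lemma without an explicit proof: it is offered as a reformulation of Corollary~\ref{C:period-doubling}, with the iterates \eqref{eq:period-doubling} ``easily checked'' via Lemma~\ref{L:reloaded}, and with the block-level mechanism only appearing later in Proposition~\ref{P:period-doubling}. Your induction on the pair $(W_k,V_k)=(\chi^k(w),\chi^k(v))$ uses exactly these ingredients, and the one genuinely new observation you add, that $\chi$ commutes with the digit-flip (so $\check{V}_kV_k=\chi^k(\check v)\chi^k(v)=\chi^{k+1}(w)$), is precisely what converts ``apply Corollary~\ref{C:period-doubling} repeatedly'' into the closed substitution formula. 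Two comments on the bookkeeping you flag as the main obstacle. It is genuinely needed, not cosmetic: the adjacency of the new interval rests on $\check{V}_kV_k$ being the \emph{shortest} even expansion of $\xi_{k+1}$ (otherwise $\xi_{k+1,R}=.\overline{w'}$ need not coincide with the left endpoint $.\overline{\check{V}_kV_k}$ of $I_{\xi_k}$), and the next inductive step needs $V_kW_k$ to be the shortest odd expansion of $1-\xi_{k+1}$. But it closes as quickly as you predict: that $V_kW_k$ represents $1-\xi_{k+1}$ at all follows from the one-line identity $.\check{V}_kV_k+.V_kW_k=1$ (use $.V_k=1-\xi_k$, $.\check{V}_k=\xi_k-s^{-|V_k|}$, $.W_k=\xi_k$), which is the same arithmetic trick the paper uses to prove Proposition~\ref{P:period-doubling}; and minimality follows because $w$ and $v$, being shortest even/odd expansions, end in at most one zero, while the last block of $W_{k+1}=\check{V}_kV_k$ is $V_k$ and of $V_{k+1}=V_kW_k$ is $W_k$, so by induction no word in the cascade ends in two zeros and the stated lengths are minimal of their parity. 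Alternatively, once the cascade words are recognized as admissible block concatenations, you could cite Proposition~\ref{P:period-doubling} (whose proof does not use this lemma) and dispense with the case analysis of Lemma~\ref{L:reloaded} altogether.
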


\begin{remark}
This substitution factorizes over the Thue-Morse substitution 
$\chi_{TM}:0 \mapsto 01; \ 1 \mapsto 10$ 
(via the change of symbols $\pi(v) = \pi(\check w) = 0$, $\pi(\check v) = \pi(w) = 1$), 
which in turn factorizes over the period doubling substitution 
$\chi_{PD}:0 \mapsto 11; \ 1 \mapsto 10$.
\end{remark}

\begin{remark}
Denote the length of $\chi^n(w)$ by $l_n$.
Since $w \stackrel{\chi}{\longrightarrow} \check v v
\stackrel{\chi}{\longrightarrow} \check v \check w v w$,
we find the recursive relation $l_{n+2} = l_{n+1} + 2l_n$, which is
solved by $l_n = 2^n \frac{|w|+2|v|}{3} + (-1)^n \frac{2|w|-2|v|}{3}$.
\end{remark}

\begin{proof}[Proof of Corollary \ref{C:period-doubling}]
 By virtue of Corollary~\ref{C:characterization} 
it suffices to check that $g^k(\xi') \notin (0,\xi')$. 
 Let us first point out that, setting $m:=|v|$, we have $g^m(\xi')=\xi$; hence (since $\xi \in \QMAX$) 
 $g^{m+k}(\xi') \notin (0,\xi)$. Thus we just have to check the orbit up to step $m$.
 
 If $k<m$ then $g^k(\xi')=.\sigma$, where $\sigma$ is a suffix of $\check{v}v$ of length $|\sigma|\geq m+1$. 
 On the other hand we know that $g^k(\xi_L) \geq \xi_L$. We claim that in fact $g^k(\xi_L) \geq \xi$.
 This is immediate if $k$ is odd.  For $k$ even let us first remark that it cannot be $g^k(\xi_L) = \xi_L$, 
 because otherwise we would 
 get $\xi_L=.\overline{p\sigma}$ (where $p$ is the prefix of $\check{v}v$ of length $k$), so $\overline{\sigma p}=g^k(\xi_L)= \xi_L=\overline{\sigma p}$, 
% hence $s$ must be a prefix of the expansion of $\xi_L$ and 
would imply $g^k(\xi)=.\sigma  < .\overline{\sigma p}=.\overline{p\sigma }= \xi_L$, contradicting the fact $\xi \in \QMAX$. 
 On the other hand 
 $g^m: ]\xi_L,\xi] \to [0,\xi_L[$ is an order reversing homeomorphism, so 
 if we had that $g^k(\xi_L) \in ]\xi_L,\xi]$
 we would also get $g_{k+m}(\xi_L)=g^m(g^k(\xi_L))< g^m(\xi_L)=\xi_L$, another contradiction.

So we can compare the $s$-adic expansion of $g^k(\xi_L)=.\sigma ... \geq \xi=.w$ 
with that of  $g^k(\xi')=.\sigma $. 
 Since the length of $w$ is $|w|\leq m+1 \leq |\sigma |$ we immediately get $\xi=.w<.\sigma =g^k(\xi')$, 
 and we are done.
\end{proof}

\subsection{Matching index}\label{sec:mi}
Fix an integer $s \ge 2$, and define $g:[0,1] \to [0,1]$ as 
$g(x) = s(1-x) \pmod 1$.
The first return map of $Q_\gamma$
to the interval $[0,1]$ has the form
\begin{equation}\label{eq:R}
R_\gamma(x) = \begin{cases}
g^2(x) = Q_\gamma^{s^2-p+2}(x) & x \in (\gamma,1) \cap [\frac{p-1}{s^2}, \frac{p}{s^2}),\ p = 1, \dots, s^2.\\[2mm]
g(x) = Q_\gamma^{p+1}(x) & x \in (0,\gamma) \cap [\frac{p-1}{s}, \frac{p}{s}),\ p = 1, \dots, s.\\
\end{cases}
\end{equation}

\begin{remark}\label{rem:ab}
In particular, 
if we code the domains of the branches of $g^2$ by blocks $ab$, $a,b \in \{ 0, \dots, s-1\}$, then $p = sa+b+1$, so that
\begin{equation}\label{eq:ab}
R_\gamma(x) = 
\begin{cases}
Q_\gamma^{s^2-sa-b+1}(x) & \text{ if } x \in (\gamma,1) \cap [ab], \\[1mm]
Q_\gamma^{a+2}(x) & \text{ if } x \in (0,\gamma) \cap [ab].
\end{cases}
\end{equation}
\end{remark}

\begin{proof}[Proof of Theorem~\ref{thm:matching_index}]
Let $\xi=.w$ be the pseudocenter of a matching interval with $s$-ary expansion $w=\e_1...\e_{2m}$. 
As usual, we have to distinguish two cases (cf.\ Lemma~\ref{L:reloaded})

{\bf Case 0} If $\e_{2m}=0$ then $v=\check{\e}_1...\check{\e}_{2m-2}(\check{\e}_{2m-1}+1)$ and we will use equation \eqref{eq:ab} to compute $\kappa^\pm$, recalling that, since $|v| = |w|-1$, matching occurs when $\xi^+$ reaches $1$ 
under iteration of $Q_\gamma$ and
$\xi^-$ reaches $1$ for the second time:
$$
\begin{array}{l}
\kappa^+= \sum_{i=1}^{m-1} (s^2-s\e_{2i-1}-\e_{2i}+1) + (s^2-s\e_{2m-1}-\e_{2m}+1),\\[1mm]
\kappa^-= \e_1 +2 + \sum_{i=1}^{m-1} (s^2-s\check{\e}_{2i}-\check{\e}_{2i+1}+1).
\end{array}
$$
Now we compute the difference, keeping in mind that $\check{\e}_{k}=s-1-\e_k$:
\begin{eqnarray*}
\kappa^+-\kappa^- &=& %\displaystyle  
\sum_{i=1}^{m-1} (s\check{\e}_{2i}+\check{\e}_{2i+1}- s\e_{2i-1}-\e_{2i}) - s\e_{2m-1}- \e_1 + s^2-1\\[1pt]
&=& (1+s)(s-1)(m-1)- s\sum_{i=1}^{m-1}(\e_{2i}+\e_{2i-1}) \\[1pt]
&& - \sum_{i=1}^{m-1} (\e_{2i+1}+  \e_{2i})  - s \e_{2m-1}- \e_1 + (s+1)(s-1)\\[1pt]
&=& (s+1)\left[(s-1)m - \sum_{k=1}^{2m}\e_{k}\right].
\end{eqnarray*}

{\bf Case 1} If $\e_{2m}\neq 0$ then $v=\check{\e}_1...\check{\e}_{2m-1}(\check{\e}_{2m}+1)0$
and we will use equation \eqref{eq:ab} to compute $\kappa^\pm$, but this time $|v| = |w|+1$ and hence matching occurs when $\xi^+$ reaches $1$ 
 for the second time and $\xi^-$ reaches $1$.
$$
\begin{array}{l}
\kappa^+= \sum_{i=1}^{m-1} (s^2-s\e_{2i-1}-\e_{2i}+1) + (s^2-s\e_{2m-1}-\e_{2m}+1) +1, \\
\kappa^-= \e_1 +2 + \sum_{i=1}^{m-1} (s^2-s\check{\e}_{2i}-\check{\e}_{2i+1}+1) +s^2 -s\check{\e}_{2m} -s +1.
\end{array}
$$
and since $\check{\e}_{k}=s-1-\e_k$ the difference gives:
\begin{eqnarray*}
\kappa^+-\kappa^- &=& \displaystyle  \sum_{i=1}^{m-1} (s\check{\e}_{2i}+\check{\e}_{2i+1}- s\e_{2i-1}-\e_{2i})
 - s\e_{2m-1}-\e_{2m}- \e_1 +s\check{\e}_{2m} + s-1\\[12pt]
&=& (1+s)(s-1)(m-1)- s\sum_{i=1}^{m-1}(\e_{2i}+\e_{2i-1})- \sum_{i=1}^{m-1} (\e_{2i+1}+  \e_{2i})\\[11pt] &&  - s\e_{2m-1}-\e_{2m}- \e_1 +s(s-1)-s\e_{2m} + s-1\\[12pt]
&=& (s+1)\left[(s-1)m - \sum_{k=1}^{2m}\e_{k}\right]
\end{eqnarray*}
In both cases we get the very same expression, and to conclude the proof we only have to check that 
it is equivalent to the formula given in Theorem~\ref{thm:matching_index} 
(which is quite immediate).
\end{proof}

\begin{corollary}
To the left of every maximal matching interval $I_\xi$, there is a maximal neutral matching interval $I_{\xi'}$ obtained from period doubling,
namely with $w' = \check v v$ being the even $s$-adic expansion of $\xi'$.
In particular, there is a cascade of maximal neutral matching interval
to the left of each $I_\xi$.
\end{corollary}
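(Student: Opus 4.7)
The plan is to take $\xi' := .\check v v$ as the candidate pseudocenter and verify three properties: (a) $\xi' \in \QMAX$, so that $I_{\xi'}$ is a maximal matching interval; (b) the right endpoint of $I_{\xi'}$ coincides with $\xi_L$, so $I_{\xi'}$ is adjacent to $I_\xi$ from the left; and (c) the matching index $\Delta_{\xi'}$ vanishes. Property (a) is nothing but the conclusion of Corollary~\ref{C:period-doubling}. The heart of the argument is (c), which will hinge on the antisymmetry of the weight $\|\cdot\|$ from Theorem~\ref{thm:matching_index} under the bitwise negation $\epsilon \mapsto \check\epsilon = s-1-\epsilon$.

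For (b), I first check that $w' := \check v v$ is the shortest even $s$-adic expansion of $\xi'$; its length $2|v|$ is even, and from the two cases of Lemma~\ref{L:reloaded} the penultimate digit of $\check v v$ is always nonzero (it is $\check\e_{2m-1}+1$ if $\e_{2m}=0$, and $\check\e_{2m}+1$ if $\e_{2m}\neq 0$), so $w'$ cannot be shortened by two digits, and it is indeed the shortest even expansion. Applying Theorem~\ref{T:pc} to $\xi'$ with even expansion $w'$ gives right endpoint $.\overline{w'} = .\overline{\check v v} = \xi_L$, proving adjacency.

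For (c), Theorem~\ref{thm:matching_index} yields $\Delta_{\xi'} = \tfrac{s+1}{2}\,\|w'\|$. Using the additivity $\|\check v v\| = \|\check v\| + \|v\|$ under concatenation, together with the identity $\|\check u\| = -\|u\|$ valid for every word $u$ (since the digit weight $s-1-2j$ changes sign under $j \mapsto s-1-j$, so $|u|_j$ and $|\check u|_{s-1-j}$ exchange roles), we obtain $\|w'\| = 0$, and hence $\Delta_{\xi'} = 0$. So $I_{\xi'}$ is a neutral matching interval.

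The cascade then follows by iteration: since $\xi' < \xi_L < s/(s+1)$ and $\xi' > 0$, we have $\xi' \in \QMAX \cap (0,s/(s+1))$, so the same construction applies to $\xi'$, producing $\xi''$ whose associated interval $I_{\xi''}$ is neutral and adjacent to $I_{\xi'}$ from the left, and so on indefinitely. I do not foresee a significant obstacle here; the only delicate point is ensuring that $\check v v$ is genuinely the shortest even expansion (so that Theorem~\ref{thm:matching_index} may be applied), and this is settled by the digit bookkeeping in Lemma~\ref{L:reloaded}.
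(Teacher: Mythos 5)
Your proof is correct and follows essentially the same route as the paper: Corollary~\ref{C:period-doubling} gives $\xi'=.\check v v\in\QMAX$ adjacent on the left (i.e.\ $\xi'_R=\xi_L$), and Theorem~\ref{thm:matching_index} together with $\|\check v v\|=\|\check v\|+\|v\|=0$ gives neutrality, with the cascade obtained by iteration. One cosmetic slip: in the case $\e_{2m}=0$ it is the \emph{last} digit $\check\e_{2m-1}+1$ of $\check v v$ that is nonzero rather than the penultimate one, but either way the even expansion $\check v v$ cannot be shortened, so your application of Theorem~\ref{thm:matching_index} stands.
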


\begin{proof}
It follows immediately from Corollary~\ref{C:period-doubling}   %Theorem~\ref{thm:matching_interval}
that $\xi_L = \xi'_R$ for $\xi'$ as given in the statement.
The shape of $w'$ implies that $|w|_a = |w|_{\check a}$ and
hence $\| w \|= 0$. By Theorem~\ref{thm:matching_index}, 
the matching is neutral.
\end{proof}

\subsection{Tuning windows and plateaux}\label{sec:tuning}
Throughout this section $\xi$ is some pseudocenter with even $s$-adic 
expansion $.w$. As usual we shall denote by $.v$  the odd $s$-adic 
expansion of $1-\xi$, and $\xi_R=.\overline{w}$.

\begin{definition}\label{D:tuning-window}
Let  $\xi_T:=.\check{v}\overline{\check{w}}$.
The interval $T_\xi := [\xi_T, \xi_R]$ will be called {\em tuning window} generated by $\xi\in \QMAX$. 
\end{definition}

For instance the rightmost tuning window is $M_s = [\frac{s}{s+1}-\frac{1}{s},\frac{s}{s+1}]$.

We will show that elements in $\EE\cap [\xi_T, \xi_R]$ have $s$-adic expansion that can be easily described. 
Aiming at this, it is very useful first to consider the set
$K(\xi_T)=\{x: g^k(x)\geq \xi_T \ \ \forall k \geq 0\}$; indeed 
it is easily seen that  $\EE \cap [\xi_T,\xi_R]\subset K(\xi_T)\cap [0,\xi_R]$.
  
\begin{theorem}\label{Thm:Komega}
Let $\xi=.w \in \QMAX$, then the following conditions are equivalent:
\begin{enumerate}
\item[(i)]  $ x\in K(\xi_T)\cap [0,\xi_R]$;
\item[(ii)] $x$ can be written as an infinite concatenation $x=.\sigma_1 \sigma_2 \sigma_3...$ where $\sigma_1\in \{w,\check{v}\}$, $\sigma_j \in \{w,v,\check{w},  \check{v}\}$ for all $j\geq 2$, and adjacent blocks must avoid certain patterns, namely:
$$\sigma_j\sigma_{j+1} \notin \{vv,v\check{w}, \check{v}\check{v}, \check{v}w, wv,w\check{w}, \check{w}\check{v}, \check{w}w\}.$$
\end{enumerate}
\end{theorem}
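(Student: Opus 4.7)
The strategy is to translate the orbit condition $g^k(x)\geq\xi_T$ for all $k\geq 0$ into a pair of parity-dependent lexicographic conditions on the $s$-adic expansion of $x$, and then to verify that such conditions are equivalent to admitting the concatenation described in (ii).

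I would first compute the action of $g$ in $s$-adic coordinates. Since $1-.e_1e_2\dots=.\check e_1\check e_2\dots$ (with $\check e=s-1-e$) and multiplication by $s$ modulo $1$ is the one-sided shift, one gets $g(.e_1e_2\dots)=.\check e_2\check e_3\dots$ and hence
$$g^k(.e_1e_2\dots)=\begin{cases}.e_{k+1}e_{k+2}\dots & k\text{ even,}\\ .\check e_{k+1}\check e_{k+2}\dots & k\text{ odd.}\end{cases}$$
Using $1-\xi_T=.v\overline w$, the condition $g^k(x)\geq\xi_T$ for all $k\geq 0$, together with $x\leq\xi_R$, becomes the system
$$T_k:=.e_{k+1}e_{k+2}\dots\geq\xi_T=.\check v\overline{\check w}\ \text{ for even }k\geq 0,\quad T_k\leq .v\overline w\ \text{ for odd }k\geq 1,\quad T_0\leq .\overline w.$$

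For the implication (ii)$\Rightarrow$(i), I would proceed by a direct inductive check on the parse. Using Lemma~\ref{L:reloaded} (which gives $\check v<w$ and $\check w<v$, together with the explicit formula for $v$ in terms of the digits of $w$) I would verify: (a) a block $w$ or $\check v$ placed so that it starts at an even position produces a tail automatically $\geq\xi_T$, regardless of what follows; (b) a block $v$ or $\check w$ placed at an odd starting position produces a tail $\leq .v\overline w$, again regardless of continuation; (c) the eight permitted adjacencies are exactly those that respect this parity alignment, since $w,\check w$ have even length (parity preserving) while $v,\check v$ have odd length (parity flipping). In each of the four cases for $\sigma_j$ the boundary comparison at position $|\sigma_j|$ reduces to comparing a few specific digits of $\sigma_j$ and $\sigma_{j+1}$ against the corresponding digits of $\xi_T$ or $.v\overline w$; this is a short finite check once the formula for $v$ is unpacked.

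For the converse (i)$\Rightarrow$(ii), I would construct the parse greedily. At an even starting position $k_j$, the bound $T_{k_j}\geq\xi_T$ together with the global upper bound forces the leading block of $T_{k_j}$ to lie in $\{w,\check v\}$; I would establish this by comparing the initial digits of $T_{k_j}$ with the explicit first digits of $w,\check v,v,\check w$ given by Lemma~\ref{L:reloaded}, ruling out $v$ and $\check w$. At an odd starting position $k_j$, the dual bound $T_{k_j}\leq .v\overline w$ is equivalent, via complementation, to $1-T_{k_j}\geq\xi_T$, and the same argument applied to $1-T_{k_j}$ forces the leading block to lie in $\{v,\check w\}$. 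The forbidden adjacencies then drop out as the encoding of parity bookkeeping: once the parity of $k_j$ is fixed, only the corresponding two blocks are available, and the disallowed pairs are exactly those that would place a block of the wrong parity at position $k_{j+1}$.

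The chief obstacle is showing that the greedy step at each position is unambiguous, i.e.\ that no tail of a point of $K(\xi_T)\cap[0,\xi_R]$ can begin with a string of digits that lexicographically separates it from all four candidate blocks. This is where the precise description of $v$ from Lemma~\ref{L:reloaded} and the maximality of $\xi$ as a pseudocenter (Proposition~\ref{P:maximal}) become essential: together they pin down $\check v$ and $w$ as adjacent $s$-adic numbers with no ``room'' for an intermediate block, and this rigidity propagates to every shifted tail via the parity-alternating form of $g^k$. Once the unambiguous greedy step is established, both existence of the parse and the adjacency restrictions follow, completing the equivalence.
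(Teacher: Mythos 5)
Your translation of membership in $K(\xi_T)\cap[0,\xi_R]$ into parity-dependent lexicographic bounds is correct, and your (i)$\Rightarrow$(ii) direction is essentially the paper's own argument (the greedy parse at even/odd positions is Lemma~\ref{L:prefix}, followed by an induction giving the transition graph). The genuine gap is in (ii)$\Rightarrow$(i). Condition (i) requires the bound at \emph{every} shift $k\geq 0$, including those landing strictly inside a block $\sigma_\ell$, while your checks (a)--(c) only control shifts aligned with block boundaries (and even there ``regardless of what follows'' is literally false for the blocks $\check v$ and $v$, e.g.\ $.\check v000\ldots<\xi_T$; those cases need the admissibility of the continuation, recursively). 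More seriously, nothing in your plan for this direction uses the hypothesis $\xi\in\QMAX$, and without it the implication is false, so no finite digit comparison based only on $\check v<w$, $\check w<v$ and the adjacency graph can close it. Concretely, for $s=2$ take $w=0110$, so $v=101$, $\check v=010$, $\check w=1001$, $\xi_T=.010\overline{1001}$: all your boundary comparisons and parity bookkeeping go through, the concatenation $x=.\overline{w}$ is admissible in the sense of (ii), yet $g(x)=.\overline{0011}<\xi_T$, the failure occurring at a shift interior to the first block; of course here $\xi=.0110\notin\QMAX$ since $g(\xi)=1/4<\xi$ (Corollary~\ref{C:characterization}), which is exactly the hypothesis your argument never invokes.

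The paper closes this point with two ingredients your sketch omits. First it proves $\xi_T\in\EE$ (i.e.\ the smallest admissible expansion really satisfies the orbit condition), by a contradiction argument using the admissible expansion of the right endpoint of a putative matching interval containing $\xi_T$. Then, for $k=|\sigma_1\cdots\sigma_{\ell-1}|+h$ with $0<h<|\sigma_\ell|$, it writes $g^k(x)=g^h(y)$ with $y=.\sigma_\ell\sigma_{\ell+1}\ldots$ and $\sigma_\ell\in\{w,\check v\}$, and sandwiches $g^h(y)$ between $g^h(\xi_T)$ and $g^h(\xi)$, both of which dominate $\xi_T$ because $\xi_T\in\EE$ and $\xi\in\QMAX$ (Proposition~\ref{P:maximal}). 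To repair your proposal you must add these steps, or an equivalent analysis of all suffixes of $w$ and $v$ exploiting $g^k(\xi)\notin(0,\xi)$; the block-boundary checks alone do not suffice.
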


Before going into the proof, let us remark that the $s$-adic expansion of a point $x$ satisfying condition $(ii)$ 
corresponds to an infinite path (starting with $\check{v} $) 
in Figure~\ref{diagramma}. We shall refer to such expansion as {\em admissible expansion} or 
{\em admissible concatenation}.
 
\begin{figure}[h]
\begin{center}
\unitlength=7mm
\begin{picture}(8,4.2)(0,0)
\put(1.7,2){\fbox{$w$}} \put(1.2,1.9){$\circlearrowleft$}
\put(4,0){\fbox{$\check v$}}\put(4,4){\fbox{$v$}}
\put(4.4,0.8){\vector(0,1){2.6}} \put(4.2,3.4){\vector(0,-1){2.6}}
\put(6.2,2){\fbox{$\check w$}}\put(7,1.9){$\circlearrowleft$}
\put(2.6,1.9){\vector(1,-1){1.3}} \put(3.9,3.5){\vector(-1,-1){1.3}}
\put(4.7,0.6){\vector(1,1){1.3}} \put(6,2.3){\vector(-1,1){1.3}}
\end{picture}
\caption{The graph for the codes of pseudocenters in the tuning window $T_\xi$ for $\xi = .w$.}
\label{diagramma}
\end{center}
\end{figure}
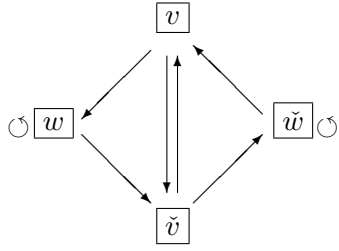
 
 It is easy to check that any admissible expansion has the form
$$ x = .w^{n_0}\check{v}\check{w}^{n_1}vw^{n_2}\check{v}\check{w}^{n_3}v
w^{n_4}\check{v}\check{w}^{n_5}v...$$
where $w^n$ denotes the concatenation of $n$ identical blocks $w$ (possibly none, if $n=0$), and it must be understood that either $n_j\in \Z_+ $ for all $j\geq 0$, or $n_j\in \Z_+$ for $0\leq j <\ell$ and $n_\ell=+\infty$  
% the symbols $n_0, n_1, n_2, n_3, ...$ are either nonnegative integers ($0$ is included) or $+\infty$ 
(in the case the expansion ends with an infinite tail of $w$ or $\check{w}$).
Note also that in an admissible expansion $n_j$ is the exponent of $w$ when $j$ is even, of $\check{w}$ when $j$ is odd. Moreover if $x$ has an admissible periodic expansion then $x=.\overline{u}$ with $u=w^{n_0}\check{v}\check{w}^{n_1}vw^{n_2}...\check{v}\check{w}^{n_{2\ell-1}}v
w^{n_{2\ell}}$. 

If $x=.\sigma_1,\sigma_2\sigma_3...$  is an admissible expansion then if
$\sigma_j\in\{v,w\} $ then $\sigma_{j+1}\in \{w, \check{v} \}$, while if  $\sigma_j\in\{\check{v},\check{w}\} $ then $\sigma_{j+1}\in \{v, \check{w} \}$.
Since by Lemma~\ref{L:reloaded}  $\check w < v$ and 
$\check v <  w$, this means that the ordering between admissible 
expansions does not depend on the particular $\xi=.w$ which has been chosen. 
For instance, it is immediate to check that $\xi_T=.\check v \overline{\check{w}}$ corresponds to 
the smallest admissible expansion. More precisely, the following result holds:

\begin{lemma}\label{lem:tuning-order}
Let us be given two admissible expansions $$
\begin{array}{l l}
x &= .w^{n_0}\check{v}\check{w}^{n_1}vw^{n_2}\check{v}\check{w}^{n_3}v
w^{n_4}\check{v}\check{w}^{n_5}v...\\
x' &= .w^{n'_0}\check{v}\check{w}^{n'_1}vw^{n'_2}\check{v}\check{w}^{n'_3}v
w^{n'_4}\check{v}\check{w}^{n'_5}v...
\end{array}
$$ 
Assume there is $\bar{k}$ such that $n_{\bar{k}}<n'_{\bar{k}}$ but $n_j=n'_j$ for all $j<\bar{k}$. Then $x<x'$ if and only if $\bar{k}$ is even. 
\end{lemma}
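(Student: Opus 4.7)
The plan is to compare $x$ and $x'$ digit by digit in their $s$-adic expansions, pinpoint the first position of disagreement, and then invoke the lexicographic orderings $\check v < w$ and $\check w < v$ supplied by Lemma~\ref{L:reloaded}.

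First I would locate the common prefix. Since $n_j=n'_j$ for all $j<\bar k$, the two expansions share the identical concatenation $w^{n_0}\check v\check w^{n_1}vw^{n_2}\cdots$ up through block $\sigma_{\bar k -1}$; then both expansions enter the $\bar k$-th block, which (following Figure~\ref{diagramma}) is $B=w$ when $\bar k$ is even and $B=\check w$ when $\bar k$ is odd. Both $x$ and $x'$ start this phase with $n_{\bar k}$ identical copies of $B$, and diverge only afterwards: $x$ moves on to the next admissible block ($\check v$ if $\bar k$ is even, $v$ if $\bar k$ is odd), whereas $x'$ still carries $n'_{\bar k}-n_{\bar k}\geq 1$ further copies of $B$ before it in turn moves on. Thus the comparison reduces to $\check v \cdot X$ versus $w\cdot Y$ in the even case, and $v\cdot X$ versus $\check w\cdot Y$ in the odd case, where $X,Y$ denote arbitrary admissible continuations.

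The conclusion then follows from Lemma~\ref{L:reloaded}. I would strengthen the bare statement $\check v<w$ (and $\check w<v$) there by inspecting its proof: the explicit formulas for $v$ in terms of $w=\epsilon_1\cdots\epsilon_{2m}$ show that $\check v$ and $w$ first disagree at position $|w|-1$ in Case 0 and at position $|w|$ in Case 1, with $\check v$ carrying the smaller digit (namely $\epsilon_{2m-1}-1$, respectively $\epsilon_{2m}-1$, both nonnegative since the expansion is shortest). The analogous internal-disagreement statement for $v$ and $\check w$ is obtained in the same way, with $v$ now carrying the larger digit. Because in every case the disagreement sits strictly within both finite words, the strict lexicographic inequality is preserved under arbitrary extensions, yielding $\check v \cdot X <w\cdot Y$ and $v\cdot X>\check w\cdot Y$ for any continuations $X,Y$. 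Substituting back gives $x<x'$ when $\bar k$ is even and $x>x'$ when $\bar k$ is odd, as required.

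The only delicate point is this internal-disagreement observation: a bare numerical inequality between finite expansions $.\check v$ and $.w$ does not automatically survive appending continuations, so one must confirm from the digit formulas that the disagreeing position actually lies within both words. Once this is verified the rest is routine tracking of the common prefix.
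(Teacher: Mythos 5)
Your proof is correct and follows essentially the same route as the paper's: isolate the common prefix, observe that at the first differing block one expansion reads $\check v$ (resp.\ $v$) where the other reads $w$ (resp.\ $\check w$), and conclude from $\check v < w$ and $\check w < v$. Your extra verification, via the digit formulas of Lemma~\ref{L:reloaded}, that the first disagreement occurs inside both finite words (so the inequality survives arbitrary admissible continuations) is a point the paper leaves implicit, and it is checked correctly.
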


\begin{proof}
If $\bar{k}$ is odd then we get
$$ \begin{array}{ll}
x=& .p \check{w}^{n_{\bar{k}}} v w^{n_{\bar{k}+1}}...\\
x'=& .p \check{w}^{n_{\bar{k}}} \check{w}^{n'_{\bar{k}}-n_{\bar{k}}} v w^{n'_{\bar{k}+1}}...
\end{array}
$$
where $p$ is a common prefix. Looking to the first block where the two expansion are different we read a $v$ for $x$ and a $\check{w}$ for $x'$, since $\check{w}<v$ we can conclude that $x'<x$. An analogous argument works when $\bar{k}$ is even.
\end{proof}

\begin{remark}\label{rk:alt-lex-ord}
Lemma~\ref{lem:tuning-order} shows that, after identifying admissible expansions with the exponents 
$n_0n_1n_2... $ these elements are ordered according to the {\em alternate lexicographic order}.
\end{remark}
%%% work!

\begin{definition}\label{D:ALO}
Let $\mathcal{A}$ be a totally ordered alphabet, we define the {\em alternate lexicographic order} 
$\preceq_{ALO}$ on the space $\mathcal{A}^\N$ of infinite sequences as follows: 
if ${\bf a}=a_1a_2a_3...$ and ${\bf b}=b_1b_2b_3...$ we say that ${\bf a} \preceq_{ALO} {\bf b}$ if
either ${\bf a} = {\bf b}$ or 
%there exists $k_0$ such that $a_k=b_k \forall k<k_0$ and 
% $a_{k_0}<b_{k_0}$ if $k_0$ is even,  $a_{k_0}>b_{k_0}$ if $k_0$ is odd.
$$
\exists k_0 : \ \ a_k=b_k \ \ \forall k<k_0 \quad \mbox{ and } \quad 
\begin{cases}
a_{k_0}<b_{k_0} &\mbox{ if } k_0 \mbox{ is even},\\
a_{k_0}>b_{k_0} &\mbox{ if } k_0 \mbox{ is odd}.
\end{cases} 
$$
\end{definition}
Take the alphabet $\mathcal{A} = \N$ the positive integers.
We can identify an infinite sequence $a_1a_2a_3...$ with the continued fraction 
expansion $[0;a_1,a_2,a_3,...]$; in this case the alternate lexicographic order corresponds 
to the usual order on the reals.

\begin{lemma}\label{L:prefix}
Let $\xi_T:=\check{v}\overline{\check{w}}$ and $K(\xi_T)=\{x: g^k(x)\geq \xi_T \ \ \forall k \geq 0\}$. If $x\in K(\xi_T)\cap [0,\xi_R]$ then $x$ can be written as $x=.wu$ or $x=.\check{v}\check{u}$ for some $u\in \{0,1,...,s-1\}^\N$ such that $u\in K(\xi_T)\cap [0,\xi_R]$.
\end{lemma}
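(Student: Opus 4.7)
The plan is to establish two claims: (a) the $s$-adic expansion of $x$ must begin with either the block $w$ or the block $\check v$; and (b) the associated tail $u$ satisfies $.u \in K(\xi_T) \cap [0,\xi_R]$.

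First I would record an exclusion principle: any $s$-adic rational fails to lie in $K(\xi_T)$, since the formula $g(.a_1 a_2 \dots) = .\check a_2 \check a_3 \dots$ (a consequence of $1-x = .\check a_1 \check a_2 \check a_3 \dots$) forces its $g$-orbit to reach $0 < \xi_T$ in finitely many steps. In particular $\xi \notin K(\xi_T)$, so every $x$ in our set has a unique $s$-adic expansion and lies in $[\xi_T,\xi) \cup (\xi,\xi_R]$. The whole argument hinges on the two $s$-adic representations of $\xi$: the terminating one $\xi = .w\overline{0}$ and the alternate one $\xi = .\check v \, \overline{s-1}$. The latter follows from Lemma~\ref{L:reloaded}, since appending $\overline{s-1}$ to the digits of $\check v$ increments the last digit by one, recovering precisely the digits of $w$ (in both parity cases for $\e_{2m}$).

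In Case A ($\xi < x \leq \xi_R$), the bounds $\xi = .w\overline{0}$ and $\xi_R = .w\overline{w}$ share the prefix $w$, so a digit-by-digit lex comparison forces $x = .wu$ for a unique sequence $u$. The hypothesis $x \leq \xi_R$ then reads as $.u \leq .\overline{w} = \xi_R$; and since $|w|$ is even, the map $g^{|w|}$ is a pure shift without overall complementation, giving $g^{|w|}(x) = .u$, hence $.u \in K(\xi_T)$ by forward invariance.

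In Case B ($\xi_T \leq x < \xi$), I use $\xi_T = .\check v \, \overline{\check w}$ and $\xi = .\check v \, \overline{s-1}$, which share the prefix $\check v$, to get $x = .\check v \, \check u$, where $\check u$ is by definition the digit-wise complement of $u$. Odd $|v|$ makes $g^{|v|}$ shift and complement once, so $g^{|v|}(x) = .u \in K(\xi_T)$. For the upper bound on $.u$, the digit identity $\e_i + \check \e_i = s-1$ yields both $.u = 1 - .\check u$ and $.\overline{\check w} = 1 - \xi_R$; combined with $x \geq \xi_T$, which reads $.\check u \geq .\overline{\check w}$, this converts to $.u \leq \xi_R$. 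The main obstacle I anticipate is precisely this careful bookkeeping around the two $s$-adic expansions of $\xi$, together with the observation that the degenerate point $x = \xi$ drops out because $\xi \notin K(\xi_T)$; the remaining verifications are routine lex comparisons and the basic complementation identity.
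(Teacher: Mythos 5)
Your proof is correct and takes essentially the same route as the paper's: you identify the prefix $w$ (resp.\ $\check v$) from the position of $x$ relative to $\xi$ inside $[\xi_T,\xi_R]$, apply $g^{|w|}$ (resp.\ $g^{|v|}$), which acts as a shift with no net complementation in the even case and one complementation in the odd case, and conclude via the forward $g$-invariance of $K(\xi_T)$. Your preliminary observation that $s$-adic rationals never lie in $K(\xi_T)$ merely streamlines the expansion-uniqueness bookkeeping that the paper handles instead through the homeomorphisms $g^{|w|}:[\xi,\xi_R]\to[0,\xi_R]$ and $f^{|v|}\circ\tau$ on $[\xi_T,\xi]$.
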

\begin{proof}
If $x\in [\xi, \xi_R]=[.w, .\overline{w}]$ then $x=.wu$; moreover, since $g^{|w|}:[\xi, \xi_R] \to [0, \xi_R]$ is an homeomorphism and $K(\xi_T)$ is $g$-invariant, we see that $g^{|w|}(x)=.u\in K(\xi_T)\cap [0,\xi_R]$.

On the other hand, if $x\in [\xi_T,\xi]$ then
$\tau x\in [1-\xi_T, 1-\xi]=[.v, .v\overline{{w}}]$, hence $\tau (x)=.vu$ \ie $x=.\check{v}\check{u}$. Moreover $g^{|v|}=f^{|v|} \circ \tau$, and since $\tau: [\xi_L,\xi] \to [.v, .v\overline{{w}}]$ and $f^{|v|}: [.v, .v\overline{{w}}]\to [0,\xi_R]$ are both homeomorphisms we get that
$g^{|v|}=f^{|v|} (\tau (x))=.u \in K(\xi_T)\cap [0,\xi_R]$.
\end{proof}

\begin{proof}[Proof of Theorem~\ref{Thm:Komega}]
[{\bf $(i) \Rightarrow (ii)$}]
The fact that $\sigma_1\in \{w,\check{v}\}$ is an immediate consequence of Lemma~\ref{L:prefix}; the same is true for the fact that $\sigma_j\in \{w,v, \check{w},\check{v}\}$ for all $j$. Applying Lemma~\ref{L:prefix} twice we see that the possible initial blocks in the expansion of $x$ are $\check{v}\check{w}, \check{v}v, ww,w\check{v}$. 

Let us prove by induction that the arrows in Figure~\ref{diagramma} represent all possible transitions.
Indeed suppose  $x=.\sigma_1\sigma_2...\sigma_N...$ is a concatenation of blocks  which follows the arrows in the graph up to a certain $N$, then one has one of the following alternative:
\begin{itemize}
\item $\sigma_1...\sigma_{N-1}$ has odd length $p$ and $\sigma_N\in \{\check{w}, v \}$.
\item  $\sigma_1...\sigma_{N-1}$ has even length $p$ and $\sigma_N\in \{\check{v}, w \}$.
\end{itemize}

If we are in the first case, applying $g^p$ with $p$ a suitable even exponent we get
$g^p(x)=.\check{\sigma}_N \check{\sigma}_{N+1}...$ and since $g^p(x)\in K(\xi_T)\cap [0,\xi_R]$ we get
$\sigma_{N}\sigma_{N +1}\in \{vw, v\check{v}, \check{w}\check{w}, \check{w}v  \}$. 
If we are in the second, then
$g^p(x)=.\sigma_N \sigma_{N+1}...$ and $\sigma_N \sigma_{N+1}\in \{ \check{v}\check{w}, \check{v}v, ww,w\check{v}\}$. This proves the admissibility condition holds up to level $N+1$.

[{\bf $(ii) \Rightarrow (i)$}]  Let us recall that $\xi_T=.\check{v}\overline{\check{w}}$ is the smallest 
admissible expansion and let us prove that $\xi_T \in \EE$. By contradiction: if this was 
not the case $\xi_T $ would be inside a matching interval $(\zeta_L, \zeta_R)$, and 
since $\zeta_R \in K(\xi_T)\cap [0,\xi_L]$ we get $\zeta_R=.\overline{\sigma_1...\sigma_\ell}$, 
where $\sigma_1..\sigma_\ell$ is the period of an admissible expansion, and starts with $\check{v}$ 
and ends with $\sigma_\ell\in \{v,w\}$ (because the transition $\sigma_\ell\sigma_1$ must be 
allowed as well). If $\sigma_\ell=v$ then 
$\zeta_L=.\sigma_1...\sigma_{\ell-1}\check{w}\check{\sigma_1}...\check{\sigma_{\ell-1}}w$ 
while if $\sigma_\ell=w$ then  $\zeta_L=.\sigma_1...\sigma_{\ell-1}\check{v}\check{\sigma_1}...\check{\sigma_{\ell-1}}v$. In any case the $s$-adic expansion of $\zeta_L$ 
is an admissible concatenation of blocks starting with $\check{v}$.
Therefore $\zeta_L>\xi_T$ which is a contradiction.

Now, if we consider any infinite admissible concatenation $x=\sigma_1\sigma_2 .... $ we must check that $g^k(x)\geq \xi_T$. If $k=|\sigma_1...\sigma_\ell|$ then there is no problem, since $g^k(x)$ is again an admissible concatenation of blocks. Otherwise we can write $k=|\sigma_1...\sigma_{\ell-1}|+h$ with $1\leq h\leq |\sigma_{\ell}|-1$ and
$g^k(x)=g^{h}(y)$, with $y=.\sigma_\ell\sigma_{\ell+1}...$ where $\sigma_\ell\in \{ w, \check{v} \}$.
If $\sigma_\ell=\check{v}$ then $g^h(y)$ belongs to the interval between 
$g^h(\xi_T)$ and $g^h(\xi)$ (which are both greater than $\xi_T$), therefore $g^h(y)\geq \xi_T$. 
If $\sigma_\ell=w$, the conclusion follows by a similar argument. 
\end{proof}

We recall that  $\EE \cap [\xi_T,\xi_R]\subset K(\xi_T)\cap [0,\xi_R]$, so the previous 
theorem gives a canonical representation for elements of $\EE$  laying in the tuning window. It is then interesting to characterize elements of
$\EE \cap [\xi_T,\xi_R]$, or also elements
 $\QMAX \cap [\xi_T, \xi_R]$ in terms of their period. 
 
\begin{theorem}
Let $x\in K(\xi_T)\cap [0,\xi_L]$ have admissible expansion
$$x = .\check{v}\check{w}^{n_1}vw^{n_2}\check{v}\check{w}^{n_3}v
w^{n_4}\check{v}\check{w}^{n_5}v...$$
Then the following conditions are equivalent
\begin{enumerate}
\item[(i)]  $x\in \EE \cap [\xi_T,\xi_R]$.
\item[(ii)] $n_1n_2n_3 ... \preceq_{ALO} n_k n_{k+1}n_{k+2}...$ for all $k \geq 0$.
%, where $\preceq_{ALO}$ denotes the alternate lexicographic order.
\end{enumerate}
\end{theorem}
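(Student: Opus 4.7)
The map $\Phi:(n_1,n_2,\dots)\mapsto{.\check v\check w^{n_1}v w^{n_2}\check v\check w^{n_3}\dots}$ sending exponent sequences to admissible expansions is, by Lemma~\ref{lem:tuning-order} and Remark~\ref{rk:alt-lex-ord}, an order-preserving bijection from its source (equipped with $\preceq_{ALO}$) onto its image in $[\xi_T,\xi]$. The plan is to transport through $\Phi$ the defining condition $g^k(x)\geq x$ of the bifurcation set.

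First I would identify the block return times at which $g$ acts as the shift $\tilde\sigma:(n_1,n_2,\dots)\mapsto(n_2,n_3,\dots)$ on exponent sequences. Using the identity $g(.d_1d_2d_3\dots)=.\check d_2\check d_3\check d_4\dots$ together with the observation that $|v|+n_1|w|$ is always odd (since $|w|$ is even and $|v|$ is odd), a direct calculation yields
$$
g^{\,|v|+n_1|w|}\bigl(\Phi(n_1,n_2,\dots)\bigr)=\Phi(n_2,n_3,\dots).
$$
Iteratively setting $t_0:=0$ and $t_j:=t_{j-1}+|v|+n_j|w|$, one then has $g^{t_j}(x)=\Phi(\tilde\sigma^j\Phi^{-1}(x))$ for every $j\ge 0$.

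Second, the core technical claim is that these are the \emph{only} times at which the orbit of $x$ returns to $[\xi_T,\xi]$: for every $k\in(t_j,t_{j+1})$ one has $g^k(x)\geq\xi$. Granted this, the condition $g^k(x)\geq x$ for all $k\geq 1$ reduces, by the order-preservation of $\Phi$, to the inequalities $\tilde\sigma^j\Phi^{-1}(x)\succeq_{ALO}\Phi^{-1}(x)$ for every $j\geq 1$, which is exactly the characterization stated. To prove the intermediate-time claim, writing $y=g^{t_j}(x)=\Phi(m_1,m_2,\dots)$ and $\ell\in(0,|v|+m_1|w|)$, the expression $g^\ell(y)$ begins with a proper tail of one of the blocks $\check v$, $v$, $\check w$ or $w$, followed by the remaining blocks of $y$. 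The pseudocenter property of $\xi$ (Proposition~\ref{P:maximal}(ii)) gives $g^i(\xi)\geq\xi_R$ for every $1\le i<|w|$ with $g^i(\xi)>0$, so every proper tail of $w$ (for even $i$) or of $v$ (for odd $i$), regarded as an $s$-adic number with trailing zeros, is at least $\xi_R$; appending the further digits of $y$ can only increase this value, giving $g^\ell(y)\geq\xi_R>\xi\geq x$.

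\textbf{Main obstacle.} The crux is the exhaustive verification of the intermediate-time claim: one must perform a case analysis according to which of the four constituent blocks $\check v,\,v,\,\check w,\,w$ contains position $\ell$ inside $y$, and on the parity of $\ell$ (which governs whether the prefix of $g^\ell(y)$ is a tail of that block or of its bit-wise complement). In the $\check v$ and $v$ cases the desired inequality reduces to the pseudocenter inequalities above; in the $\check w$ and $w$ cases one additionally uses that $\xi_R,\xi_L\in\EE$ via Proposition~\ref{P:maximal}(ii) to control tails of $.\overline w$ and $.\overline{\check v v}$. The remaining combinatorial work is lengthy but routine given the inequalities $\check v<w$ and $\check w<v$ of Lemma~\ref{L:reloaded}.
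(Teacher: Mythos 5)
Your framework is fine as far as it goes: the block-return identity $g^{|v|+n_1|w|}\bigl(\Phi(n_1,n_2,\dots)\bigr)=\Phi(n_2,n_3,\dots)$ is correct (the exponent is odd, so $g^{|v|+n_1|w|}$ deletes $\check v\check w^{n_1}$ and complements the rest), and granted your intermediate-time claim the equivalence does reduce to Lemma~\ref{lem:tuning-order}, exactly as in the paper's $(i)\Rightarrow(ii)$. The gap is the intermediate-time claim itself. First, the bound you actually argue for, $g^\ell(y)\ge\xi_R$, is false: take $s=2$, $\xi=\tfrac12$ (so $w=10$, $v=1$, $\check v=0$, $\check w=01$, $\xi_R=\tfrac23$) and $y=.\overline{\check v\,\check w\,v\,w}=.\overline{001110}=\tfrac29$, a point of the theorem's domain; then $k=1\in(t_0,t_1)=(0,3)$ but $g(y)=\tfrac59<\tfrac23=\xi_R$ (the iterate begins with a full block $w$, so it is $\ge\xi$ but not $\ge\xi_R$). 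Second, and more seriously, the inequalities you invoke control only the iterates beginning with a tail of $w$ at even offset or a tail of $v$ at odd offset (these are the numbers $g^i(\xi)$, handled by Proposition~\ref{P:maximal}(ii) plus ``appending digits increases''). The delicate case is an iterate landing inside the $\check v$-block with even offset: there $g^\ell(y)=.\check v_{\ell+1}\dots\check v_{|v|}\,\check w^{\,j}v\cdots$, whose leading word is a tail of $\check v$, which is never of the form $g^i(\xi)$; comparison with $\xi_L=.\overline{\check v v}\in\EE$ fails because the continuation $\check w^{\,j}v\cdots$ undercuts $\xi_L$'s continuation $v\check v v\cdots$ (recall $\check w<v$), and comparison with $\xi_T=.\check v\overline{\check w}$ yields only $g^\ell(y)\ge\xi_T$ --- the bound needed for Theorem~\ref{Thm:Komega}, but not the $\ge\xi$ (or even $\ge x$) your reduction requires. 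So the crux of your proof is unproven, and it is not clear that the unconditional statement ``the orbit meets $[\xi_T,\xi)$ only at the times $t_j$'' is even true.

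The paper does not prove, nor need, such an unconditional statement; its estimate at intermediate times is conditional on the hypothesis. For $(ii)\Rightarrow(i)$ it first uses the ALO hypothesis and Lemma~\ref{lem:tuning-order} to get that the tail $y$ of $x$ at a block boundary satisfies $y\ge x$ (and starts with $w$ or $\check v$); for a time falling strictly inside the next block it writes $g^k(x)=g^h(y)$ and then argues either $y\in\EE$, whence $g^h(y)\ge y\ge x$, or $y$ lies in a matching interval $(\zeta_L,\zeta_R)\subset[\xi_T,\xi_R]$, in which case an expansion/monotonicity argument as in Proposition~\ref{P:maximal} gives $g^h(y)\ge\zeta_R>y\ge x$. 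In other words, intermediate iterates are bounded below relative to the tail $y$ (using the hypothesis), not relative to $\xi$. To salvage your route you would have to actually prove your claim in the $\check v$-case --- it does not follow from the tools you cite --- or else switch to this conditional argument.
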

\begin{proof}
{\bf $[(i)\Rightarrow (ii)]$} follows from Lemma~\ref{lem:tuning-order}.

{\bf $[(i)\Rightarrow (ii)]$}
For any infinite admissible concatenation $x=\sigma_1\sigma_2 .... $ we must check that $g^k(x)\geq x$. If $k=|\sigma_1...\sigma_\ell|$ then there is no problem, since the expansion $g^k(x)$ is again an admissible concatenation of blocks which, by the hypothesis and 
Lemma~\ref{lem:tuning-order}, is no less than $x$. Otherwise we can write $k=|\sigma_1...\sigma_{\ell-1}|+h$ with $1\leq h\leq |\sigma_{\ell}|-1$ and
$g^k(x)=g^{h}(y)$, with $y=.\sigma_\ell\sigma_{\ell+1}...\geq x$ where $\sigma_\ell\in \{ w, \check{v} \}$. If $y\in \EE$ we are done; otherwise $y$ 
lies in a matching interval $(\zeta_L,\zeta_R) \subset [\xi_T, \xi_R]$ and
the same argument as in Proposition~\ref{P:maximal} leads to $g^h(y)\geq \zeta_R\geq x$. 
% ... {\bf proof still missing!!!}
\end{proof}

\begin{corollary}
Let $\zeta=.z \in  \QMAX \cap [\xi_T, \xi_R]$. Then either $z=w$ or
$$ z=\check{v}\check{w}^{n_1}vw^{n_2}...\check{v}\check{w}^{n_{2\ell-1}}v
w^{n_{2\ell}}$$
where $n_j$ are non-negative integers such that
$n_1n_2...n_{2\ell}$ is minimal among its cyclic permutations in the alternate lexicographic order. 
\end{corollary}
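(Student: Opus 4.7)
The plan is to work via the right endpoint $\zeta_R$ of the matching interval containing $\zeta$, and apply the theorem immediately preceding the corollary (which characterises $\EE\cap[\xi_T,\xi_R]$ via alternate-lexicographic minimality of the exponent sequence). If $\zeta=\xi$ there is nothing to prove, so assume $\zeta\neq\xi$. Since $\zeta\in\QS$ while $\xi_L=.\overline{\check v v}$ and $\xi_R=.\overline{w}$ are rational but not $s$-adic (they have strictly periodic, hence non-terminating, expansions), $\zeta\notin\{\xi_L,\xi_R\}$; combined with $I_\zeta\cap I_\xi=\emptyset$ (distinct matching intervals are disjoint), this forces $\zeta\in(\xi_T,\xi_L)$. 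As $\xi_T\in\EE$ by the argument in the proof of Theorem~\ref{Thm:Komega}, the matching interval $I_\zeta$ cannot contain $\xi_T$, so $I_\zeta\subset[\xi_T,\xi_L]$ and hence $\zeta_R\in\EE\cap[\xi_T,\xi_R]\subset K(\xi_T)\cap[0,\xi_R]$.

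Next, I would apply Theorem~\ref{Thm:Komega} to $\zeta_R$ to obtain its admissible block expansion. Because $\zeta_R<\xi$, Lemma~\ref{L:prefix} forces the first block to be $\check v$, and then the transition graph of Figure~\ref{diagramma} determines the block sequence deterministically, giving
$$
\zeta_R=.\check v\check w^{n_1}vw^{n_2}\check v\check w^{n_3}vw^{n_4}\cdots
$$
for non-negative integers $n_j$. Since $\zeta_R=.\overline{z}$ is purely periodic in digits of period $|z|$ (which is even), this block expansion is also periodic; moreover the long blocks strictly alternate between $\check v$ and $v$, so returning to the initial $\check v$ requires an even number $2\ell$ of them. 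Reading off one full block-period in digits then yields
$$
z=\check v\check w^{n_1}vw^{n_2}\cdots\check v\check w^{n_{2\ell-1}}vw^{n_{2\ell}}.
$$

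Finally, applying the preceding theorem to $\zeta_R\in\EE\cap[\xi_T,\xi_R]$ gives that the infinite exponent sequence $\overline{n_1n_2\cdots n_{2\ell}}$ is minimal, in the alternate lexicographic order, among all its shifts $n_kn_{k+1}\cdots$; for a sequence of period $2\ell$, this is equivalent to the finite word $n_1n_2\cdots n_{2\ell}$ being minimal in the alternate lexicographic order among its cyclic permutations, which is exactly the claim. The most delicate point is the middle step: one must verify that the admissible block parsing of $\zeta_R$ is uniquely determined (so the exponents $n_j$ are well-defined) and that one full block-period at the digit level gives back exactly the word $z$. The uniqueness follows from Lemma~\ref{L:prefix} combined with the deterministic transitions of Figure~\ref{diagramma}; and if $z$ turns out to be a $k$-fold repetition of a shorter admissible pattern, the cyclic minimality claim survives because it reduces to cyclic minimality of the minimal period.
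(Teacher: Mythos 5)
This is correct and is essentially the derivation the paper intends: pass to the right endpoint $\zeta_R=.\overline{z}\in\EE\cap[\xi_T,\xi_L]$, expand it admissibly via Theorem~\ref{Thm:Komega} (first block $\check v$ since $\zeta_R\le\xi_L<\xi$), and read the preceding theorem's minimality among shifts of the exponent sequence as cyclic minimality of $n_1\dots n_{2\ell}$. The alignment point you flag --- that the block parsing of the digit-periodic point $.\overline{z}$ is periodic with a boundary at $|z|$, so one block-period spells exactly $z$ --- is taken for granted in the paper as well (e.g.\ where $\zeta_R=.\overline{\sigma_1\dots\sigma_\ell}$ is asserted inside the proof of Theorem~\ref{Thm:Komega}), so your argument matches the paper's proof in both route and level of detail.
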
 

For instance, given $\zeta \in \QMAX\cap [\xi_T, \xi_R]$ we can well describe its period-doubling sequence of matching intervals in term of admissible expansions.
\begin{proposition}\label{P:period-doubling}
If $\zeta=.\sigma_1...\sigma_\ell\in \QMAX \cap [\xi_T, \xi_R]$ then  $\sigma_\ell \in \{v,w\}$,
$$1-\zeta =.\check{\sigma_1}...\check{\sigma}_{\ell -1}\sigma'_{\ell},  \ \ \mbox{ where } \sigma'_\ell=\left\{
\begin{array}{ll}
 v & \mbox{ if } \sigma_\ell =w\\
 w & \mbox{ if } \sigma_\ell =v
\end{array}
\right.
$$

Therefore the pseudocenter of the matching interval adjacent (on the left) to $I_\zeta$ is 
$\zeta_1:=.\sigma_1...\sigma_{\ell-1}\check{\sigma}'_\ell\check{\sigma}_1...\check{\sigma}_{\ell -1}\sigma'_{\ell}$.
\end{proposition}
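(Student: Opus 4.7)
The plan is to split the statement into three claims and dispatch each using tools already developed. For the first claim, that $\sigma_\ell \in \{v, w\}$, I would appeal directly to the preceding corollary: every $\zeta \in \QMAX \cap [\xi_T, \xi_R]$ is either $.w$ (giving $\sigma_\ell = w$) or has the shape $.\check{v}\check{w}^{n_1}vw^{n_2}\cdots\check{v}\check{w}^{n_{2k-1}}vw^{n_{2k}}$, whose final block is $w$ when $n_{2k} \ge 1$ and $v$ when $n_{2k} = 0$.

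For the second claim, the formula $1-\zeta = .\check\sigma_1\cdots\check\sigma_{\ell-1}\sigma'_\ell$, I would use a short $s$-adic computation. Writing $u := \sigma_1\cdots\sigma_{\ell-1}$ so that $\zeta = .u + s^{-|u|}\,.\sigma_\ell$, and combining the digitwise-complement identity $.\check u = 1 - .u - s^{-|u|}$ with the defining relation $.w + .v = 1$ (hence $.\sigma_\ell + .\sigma'_\ell = 1$), one obtains
\[
1-\zeta \;=\; \bigl(1-.u-s^{-|u|}\bigr) + s^{-|u|}\bigl(1-.\sigma_\ell\bigr) \;=\; .\check u + s^{-|u|}\,.\sigma'_\ell \;=\; .\check u\,\sigma'_\ell.
\]
A parity check using Lemma~\ref{L:reloaded} (which ensures $|v| = |w| \pm 1$) then confirms that $|u| + |\sigma'_\ell|$ is odd, so this is genuinely the \emph{odd} $s$-adic expansion of $1-\zeta$, as required.

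For the third claim, the formula for $\zeta_1$, I would invoke Corollary~\ref{C:period-doubling} with $\zeta$ in place of $\xi$. Setting $v' := \check\sigma_1\cdots\check\sigma_{\ell-1}\sigma'_\ell$ to be the odd expansion of $1-\zeta$ just computed, that corollary identifies the pseudocenter of the period-doubled interval adjacent on the left as $.\check{v'}v'$; since $\check{v'} = \sigma_1\cdots\sigma_{\ell-1}\check\sigma'_\ell$, this yields precisely the announced $\zeta_1$.

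The only mildly subtle point I anticipate is the parity bookkeeping in the second step — one must verify that the $v \leftrightarrow w$ swap in the final block is exactly what converts the even-length expansion of $\zeta$ into an odd-length expansion of $1-\zeta$. Beyond this, the proposition amounts to a repackaging of the preceding corollary on the shape of tuning-window pseudocenters together with the period-doubling formula of Corollary~\ref{C:period-doubling}, and no deeper obstacle is expected.
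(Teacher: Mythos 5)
Your proposal is correct and follows essentially the same route as the paper: establish the displayed formula for $1-\zeta$ (your complement-plus-$s^{-|u|}$ computation is just an explicit version of the paper's ``carry of $1$'' remark, with the same parity check on the length), and then read off $\zeta_1$ from Corollary~\ref{C:period-doubling} applied to $\zeta$. The identification of $\sigma_\ell\in\{v,w\}$ via the preceding corollary on admissible expansions is likewise what the paper implicitly uses, so there is nothing substantively different to flag.
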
 

\begin{proof}
 It is enough to prove the formula for the expansion of $1-\zeta$; it is immediate to check that the expression given in the proposition has odd length, and it is also easy to check that adding it to $\zeta$ gives $1$: this because $.\sigma_\ell+.\sigma'_\ell=.w+.v=1$, so in the addition the very last block just generates a carry of 1.
\end{proof}
Using repeatedly this statement we can generate the formulas \eqref{eq:period-doubling}, which describe the first period-doubling cascade.
\medskip 

So far we have shown that, from a combinatorial point of view, all tuning windows look just the same. 
This reflects on the shape of the graph of the entropy,  as we shall see soon. Before stating the next 
result let us introduce the following compact notation: 
$$
\begin{array}{l} {\bf n}:=n_1n_2...n_{2\ell}\in \N_0^*, \\[12pt] 
\llbracket {\bf n}\rrbracket:=\sum_{j=1}^\ell (-1)^j n_j
\end{array} \ \ \ \ \ \  %\mbox{ and } 
{\bf n}_w:=\check{v}\check{w}^{n_1}vw^{n_2}...\check{v}\check{w}^{n_{2\ell-1}}v w^{n_{2\ell}}.
$$
 %%%%

\begin{proposition}\label{P:tuning-index}
Let $\zeta:=.\overline{z} \in \EE\cap [\xi_T, \xi_L]$. Then, writing $z= {\bf n}_w$, 
$$ 
%\check{v}\check{w}^{n_1}vw^{n_2}...\check{v}\check{w}^{n_{2\ell-1}}v w^{n_{2\ell}}, 
\ \ \ \|z\|=\| \underline{n}_w\|=  \llbracket {\bf n}\rrbracket \ \ 
\|w\|
$$
\end{proposition}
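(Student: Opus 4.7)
The plan is to exploit two elementary properties of the functional $\|\cdot\|$: additivity under concatenation and anti-symmetry under the digit involution $\check{\cdot}$. Once these are in place, the identity $\|z\| = \llbracket {\bf n}\rrbracket\,\|w\|$ falls out from bookkeeping on the block decomposition of $z$.

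First I would verify additivity: since $|u_1 u_2|_j = |u_1|_j + |u_2|_j$ for every digit $j$, and $\|\cdot\|$ is a linear combination of the digit-counts, we have $\|u_1 u_2\| = \|u_1\| + \|u_2\|$. Next I would establish the anti-symmetry relation $\|\check u\| = -\|u\|$, which is the one calculation carrying the content of the proposition. Since the involution $j \mapsto s-1-j$ gives $|\check u|_j = |u|_{s-1-j}$, the substitution $k = s-1-j$ yields
\[
\|\check u\| = \sum_{j=0}^{s-1}(s-1-2j)\,|u|_{s-1-j} = \sum_{k=0}^{s-1}\bigl(2k-(s-1)\bigr)|u|_k = -\|u\|.
\]
In particular, $\|\check v\| + \|v\| = 0$ and $\|\check w\| = -\|w\|$.

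The final step is to apply additivity to the decomposition $z = {\bf n}_w = \check v\,\check w^{n_1}\,v\,w^{n_2}\cdots\check v\,\check w^{n_{2\ell-1}}\,v\,w^{n_{2\ell}}$ coming from the corollary just above, which contains exactly $\ell$ copies each of $\check v$ and $v$ and blocks $\check w^{n_{2j-1}}$ for $j=1,\dots,\ell$ and $w^{n_{2j}}$ for $j=1,\dots,\ell$. Concatenation-additivity gives
\[
\|z\| \;=\; \ell\bigl(\|\check v\|+\|v\|\bigr) + \sum_{j=1}^{\ell} n_{2j-1}\|\check w\| + \sum_{j=1}^{\ell} n_{2j}\|w\|,
\]
and plugging in $\|\check v\|+\|v\|=0$ together with $\|\check w\|=-\|w\|$ collapses this to $\|w\|\sum_{j=1}^{2\ell}(-1)^j n_j = \llbracket {\bf n}\rrbracket\,\|w\|$, as asserted.

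There is no genuine obstacle here: the statement is essentially an algebraic identity reflecting the anti-symmetry of the weights $s-1-2j$ about the midpoint $(s-1)/2$. The only point that needs care is that the alternating sign pattern in the block decomposition of $z$ (odd-indexed blocks carry $\check v, \check w$ and even-indexed blocks carry $v, w$) must be lined up with the sign convention $(-1)^j$ in $\llbracket{\bf n}\rrbracket$; once this is done correctly, the remaining input from the preceding corollary supplies the normal form of $z$ and no further structural information about $\EE\cap[\xi_T,\xi_L]$ is required.
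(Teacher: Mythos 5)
Your proposal is correct and follows essentially the same route as the paper: decompose $z={\bf n}_w$ into blocks, then use additivity $\|u_1u_2\|=\|u_1\|+\|u_2\|$ and anti-symmetry $\|\check u\|=-\|u\|$ to collapse the sum to $\llbracket{\bf n}\rrbracket\,\|w\|$. The only difference is cosmetic: you verify the anti-symmetry identity via the substitution $k=s-1-j$, which the paper merely asserts, and you write the alternating sum over all $2\ell$ indices, which fixes an apparent typo in the paper's upper summation limit.
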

\begin{proof}
We shall use the following properties of $\| \|$:
$$ \|u_1 u_2\|=\|u_1\|+\|u_2\|,\ \ \ \ \|\check{u}\|=-\|u\|.$$
Therefore
\begin{eqnarray*} 
\|z\| &=& \|\check{v}\|+n_1\|\check{w}\|+\|v\|+n_2\|w\|+...+\|\check{v}\|+n_{2\ell-1}\|\check{w}\| +\|v\|+ {n_{2\ell}}\|
w\| \\
&=& \|w\|\sum_{j=1}^\ell (-1)^j n_j
\end{eqnarray*}
as required.
\end{proof}

\begin{remark}\label{rem:CT3}
This description allows us to see an unexpected link between the structure of $\EE$ inside a tuning 
window and the set bifurcation set $\EE_{CF}$ for the $\alpha$-continued fractions of Nakada (see \cite{CT3}). Indeed this latter bifurcation set can be characterized by means of the Gauss map $G$ as $$\EE_{CF}=\{x\in [0,1]: G^k(x)\geq x  \ \ \forall k\geq 0\}.$$  
Considering the continued fraction expansion  $x= [0;a_1,a_2,a_3,...]$ one can see 
that $x\in \EE_{CF}$ if and only if the sequence 
$a_1a_2a_3...\preceq_{ALO} a_{k+1}a_{k+2}a_{k+3} \ \forall k \geq 0$.
\ie the string of partial quotients is minimal among its shifted copies with respect to the ALO order. 
The map $\tau_w:\EE_{CF} \to \EE \cap [\xi_T(w), \xi_L(w)]$ defined as 
$$
\tau_w([0;a_1,a_2,...])= \check{v}\check{w}^{a_1-1}vw^{a_2-1}...
$$ 
is an order preserving bijection.
 
Moreover, by virtue of Proposition~\ref{P:tuning-index}, this correspondence reflects on the shape of the entropy: matching intervals of positive, negative or zero index  in the tuning window are intertwined  exactly in the same way as the matching intervals for the $\alpha$-continued fractions. 
\end{remark}

Following \cite{CT3}  let us define the set of untuned parameters  $UT \subset \EE$ as
$$UT:=[0,\frac{s}{s+1}]\setminus \bigcup_{w\in \QMAX} (\xi_T(w), \xi_R(w))$$

\begin{conjecture}
Every element $\zeta \in UT\setminus \{\frac{s}{s+1}  \}$ is accumulated by non-neutral matching intervals.
\end{conjecture}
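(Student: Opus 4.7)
The plan is to argue by contradiction. Assume some $\zeta \in UT \setminus \{\frac{s}{s+1}\}$ has a neighborhood $U$ in which every matching interval is neutral. Since matching intervals are dense in $[0,\frac{s}{s+1}]$ by Theorem~\ref{thm:typical}, I can pick pseudocenters $\xi^{(k)} = .w^{(k)} \in U$ with $\xi^{(k)} \to \zeta$; by Theorem~\ref{thm:matching_index}, all but finitely many satisfy $\|w^{(k)}\| = 0$. Moreover, $\zeta \in UT$ and $\zeta \neq \frac{s}{s+1}$ force $\zeta \leq \frac{s}{s+1}-\frac{1}{s}$, so $\zeta$ lies in or to the left of the top tuning window $M_s$.

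The combinatorial heart of the argument is the following \emph{structural lemma}, which I would prove by strong induction on $|w|$: every neutral $\xi \in \QMAX$ either satisfies $\xi_R = \frac{s}{s+1}$ (so $\xi$ is the pseudocenter of $M_s$), or is strictly contained in the tuning window $T_{\xi'}$ of some non-neutral pseudocenter $\xi'$. The induction uses the identity $\|z\| = \llbracket \mathbf{n}\rrbracket \|w'\|$ from Proposition~\ref{P:tuning-index}: if $z = \mathbf{n}_{w'}$ has $\|z\|=0$ then either $\|w'\|=0$ (iterate on the shorter neutral parent, eventually reaching either $M_s$ or a non-neutral ancestor), or $\llbracket \mathbf{n}\rrbracket = 0$ with $\|w'\|\neq 0$ (done). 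The base case uses that the only balanced two-digit pseudocenter is $w = (s-1)\,0$, which generates $M_s$.

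Granted the structural lemma, I distinguish two cases. If infinitely many $\xi^{(k)}$ lie in $M_s^\circ$, then $\zeta \in \overline{M_s}$ combined with $\zeta \in UT \setminus \{\frac{s}{s+1}\}$ forces $\zeta = \frac{s}{s+1}-\frac{1}{s}$; I then exhibit explicit non-neutral pseudocenters outside $M_s$ converging to $\zeta$ from the left (typically, $s$-adic rationals formed by truncating the $s$-adic expansion of $\zeta$ and appending a digit that unbalances $\|w\|$), contradicting the hypothesis. Otherwise, all but finitely many $\xi^{(k)}$ lie in non-neutral tuning windows $T_{\xi'_k}$: either the $\xi'_k$ stabilize to a common $\xi'$, so that $\zeta \in \overline{T_{\xi'}}\cap UT$ forces $\zeta \in \partial T_{\xi'}$ and non-neutral matching intervals on the $\zeta$-side exterior of $T_{\xi'}$ accumulate at $\zeta$; or else the diameters of $T_{\xi'_k}$ shrink to zero so that the non-neutral pseudocenters $\xi'_k \to \zeta$ themselves provide the required accumulation. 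In either subcase the hypothesis fails.

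The principal obstacle is proving the structural lemma rigorously. While its statement is combinatorially natural, it demands showing that every neutral $w$ (other than a rotation of $(s-1)\,0$) decomposes canonically as $w = \tau_{w'}(\mathbf{n}_{w'})$, which in turn requires careful analysis of the admissibility graph (Figure~\ref{diagramma}) and iterated application of the tuning correspondence with Nakada's $\alpha$-continued fractions from Remark~\ref{rem:CT3}. A secondary---but routine---technical ingredient is the explicit construction of non-neutral approximants to boundary points of tuning windows, which should follow from period-doubling-type cascades as in Proposition~\ref{P:period-doubling} together with the observation that $\|w\|$ responds sensitively to small perturbations of the trailing digits of $w$.
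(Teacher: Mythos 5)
You are attempting a statement that the paper itself leaves open: it is stated as a conjecture, and Section~\ref{sec:numerical} only reduces the related Question (Q2) to a combinatorial claim that the authors verify case by case, so there is no proof in the paper to compare against; the only question is whether your argument closes the gap, and it does not. The whole weight of your proposal rests on the ``structural lemma'', and that lemma \emph{is} the open content of the conjecture rather than a technical step. Your induction presupposes that every neutral pseudocenter $w$ (beyond the base case) decomposes as $w=\mathbf{n}_{w'}$ over a strictly shorter pseudocenter $w'$, i.e.\ that every neutral matching interval already lies inside a proper tuning window. Nothing in the paper yields this: Theorem~\ref{Thm:Komega} and Proposition~\ref{P:tuning-index} describe the parameters lying \emph{inside} a given tuning window $T_\xi$, and Remark~\ref{rem:CT3} transports the structure of $\EE_{CF}$ into such a window; none of these results shows that a given neutral word is tuned. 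A ``primitive'' neutral pseudocenter unrelated to $M_s$ would make your induction step impossible, and ruling out such words is exactly what the authors cannot do in general (their Claim about primitive neutral tuning windows is only checked in examples). So the core of the proposal is circular: it assumes the decomposition whose existence is equivalent to what must be proved.

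Moreover, the structural lemma is false as stated, and so is your base case. For $s=2$ the words $w=01$ (so $\xi=\frac14$) and $w=0011$ (so $\xi=\frac{3}{16}$) are neutral pseudocenters with $\xi_R\neq\frac23$; they lie in the interior of $M_2=[\frac16,\frac23]$, and they cannot lie in any non-neutral tuning window, since no tuning window strictly contains $M_s$ (its right endpoint $\frac{s}{s+1}$ is the right end of the bifurcation locus and the only pseudocenter with $\xi_R=\frac{s}{s+1}$ is $1-\frac1s$), while every tuning window contained in $M_s$ has a neutral seed by Proposition~\ref{P:tuning-index}. Likewise $01$ refutes the claim that $(s-1)0$ is the only balanced two-digit pseudocenter. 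These slips are repairable by replacing the first alternative with ``$\xi\in\overline{M_s}$'', but the repaired lemma is still the open problem. The endgame also has loose ends: the dichotomy ``the $\xi'_k$ stabilize or the diameters of $T_{\xi'_k}$ shrink to zero'' is not exhaustive as argued (tuning windows can be nested), and the non-neutral approximants to $\frac{s}{s+1}-\frac1s$ are asserted rather than constructed; a cleaner route would use that the endpoint of each $T_{\xi'_k}$ nearest $\zeta$ converges to $\zeta$ together with the fact that non-neutral matching intervals accumulate at both endpoints of a non-neutral tuning window from inside. But these are secondary: the decisive gap is that your key lemma assumes the conjecture's hard content.
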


%\rood{ CC Note: if we can prove this claim we get an analogue of theorems 1.3 and theorem 6.16 of \cite{CT3} .}

\subsection{Plateaux}\label{sec:plateaux}

%\begin{definition}
%A {\em plateau} for a real function $h:\R \to \R$ is an interval $J$ such that $h$ is constant on $J$ and there is no other interval $J'\supset J$ on which $h$ is constant.
%\end{definition}

\begin{definition}
A {\em neutral window} for the family $(Q_\gamma)_\gamma$ is a maximal open interval 
$J \subset (0,s/(s+1))$ in parameter space such that
$J$ does not intersect any non-neutral matching interval.
\end{definition}

\qquad\begin{minipage}[h]{0.9\textwidth}{\footnotesize 
\begin{example}\label{ex:M}
The maximal plateau $M_s = [\frac{s}{s+1}-\frac1s, \frac{s}{s+1}] $ from \eqref{eq:M}.
For $\gamma = \frac{s}{s+1}$, 
the map $Q_\gamma$ is continuous and has a Markov partition of $s$ atoms
$[\frac{s}{s+1}-r,\ \frac{s}{s+1}-r+1)$ for
$r = 0, \dots, s-1$.
The transition matrix and characteristic polynomial are
$$
\begin{pmatrix}
0      & 1 & 0 & & & & \\
0      & 0 & 1 & 0 & & & \\
\vdots &   & 0 & 1 & 0 & & \\
  &   &   & \ddots & \ddots & \ddots  & \\
\vdots & & &   & 0 & 1 & 0 \\
0 & & & &  & 0 & 1\\
1 & 1 & \dots &  & \dots & 1 & 1
\end{pmatrix}
\quad \text{ and } \quad
p(\lambda) = \lambda^s - \lambda^{s-1} - \lambda^{s-2} - \dots -1.
$$
Therefore $h_{top}(Q_\gamma)$ is the logarithm of the leading root of $p(\lambda)$.
We know already from Proposition~\ref{prop:mono} that the metric entropy is
$h_\mu(Q_\gamma) = \frac{2\log s}{s+1}$.
Moreover $M_s$ is a maximal plateau since it is accumulated by non-neutral matching intervals 
on the left and the adjacent non-neutral matching interval $[\frac{s}{s+1}, \infty)$ on the right.
%Question:  Does $Q_\gamma$ indeed have maximal entropy for $\gamma = \frac{s}{s+1}$, as the numerics suggest?
\end{example}}
\end{minipage}

%\begin{remark}
%It would be interesting to understand the other neutral windows.
%such as the second largest $[\xi_T, \frac19]$ where $\frac19 = \xi_L$ for the basic 
%matching interval with pseudocenter $\xi = \frac18$, and conjecturally 
%$\xi_T = \frac{125}{1152} = \frac19 - \frac{1}{3 \cdot 128} = \frac{1}{128}(13 + \frac89)$.
%\end{remark}

The question whether entropy is constant on the entire neutral windows (as the numerics suggest)
or has some devil's staircase behavior is answered by the following:

\begin{theorem}\label{thm:monotone3}
If $J$ is a neutral window for the family $(Q_\gamma)_\gamma$ then both 
the metric  and the topological entropy are constant on the interval $\bar{J}$.
%Moreover $\bar{J}$ is a {\em plateau} for the metric (resp. topological) entropy,
%\ie is the connected component of the set where the metric (resp. topological) entropy is constant. 
\end{theorem}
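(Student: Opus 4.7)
My plan is to combine continuity of both entropy functions with the self-similar period-doubling / tuning combinatorics developed in Section~\ref{sec:tuning}. First, both $\gamma \mapsto h_\mu(\gamma)$ and $\gamma \mapsto h_{top}(\gamma)$ are continuous: metric entropy by the H\"older regularity recalled at the beginning of Section~\ref{sec:3} (Keller--Liverani), and topological entropy because $(Q_\gamma)$ is a continuous family of piecewise monotone maps with uniformly bounded lap number (Misiurewicz--Szlenk). Second, by the definition of a neutral window together with Theorem~\ref{thm:typical}, the matching intervals inside $J$ form an open subset of full Lebesgue measure in $\bar J$, and every one of them is neutral. By Theorem~\ref{thm:monotone} and Proposition~\ref{P:monotone-htop} both entropies are then locally constant on this open dense subset.

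The real content is upgrading local to global constancy, which I would do via the tuning structure. The key observation is that, by Proposition~\ref{P:tuning-index} combined with Theorem~\ref{thm:matching_index}, every matching interval contained in a tuning window $T_\xi$ with $\xi=.w$ has matching index a multiple of $\|w\|$. Since $J$ is maximal among open intervals not meeting any non-neutral matching interval, one can identify $\bar J$ with the closure of a maximal neutral tuning window $T_\xi$ with $\|w\|=0$. At the right endpoint $\xi_R = .\overline{w}$ the map $Q_{\xi_R}$ has a Markov partition (Remark~\ref{rem:stable-match}), so both entropies can be computed explicitly from the corresponding transition matrix; call the common value $h_0$. By continuity of the entropies at $\xi_R$, the principal neutral matching interval $I_\xi$ (whose constant entropy value must coincide with $\lim_{\gamma \to \xi_R^-} h(\gamma)$) inherits $h_\mu = h_{top} = h_0$.

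Next I would propagate the value $h_0$ to every matching interval $I_\eta \subset T_\xi$. The elementary move is supplied by Corollary~\ref{C:period-doubling} and Proposition~\ref{P:period-doubling}: every matching interval has a period-doubled neighbor immediately on its left, and continuity of entropy at the common endpoint equates the two locally constant values. Iterating this move yields the period-doubling cascades rooted at $I_\xi$; reiterating it \emph{inside} nested tuning windows $T_\eta \subset T_\xi$ (which are themselves neutral by the same Proposition~\ref{P:tuning-index}) reaches the matching intervals that do not belong to the direct cascade of $I_\xi$. A combinatorial induction on the length of the admissible ${\bf n}$-code of $\eta$ supplied by Theorem~\ref{Thm:Komega} then connects any $I_\eta$ to $I_\xi$ through a finite chain of such symbolic moves. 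Finally, although $\EE \cap \bar J$ may have positive Hausdorff dimension (Theorem~\ref{thm:top}), it is accumulated by matching intervals from both sides, so continuity extends the constant value $h_0$ to every point of $\bar J$.

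The delicate step is exactly this propagation across arbitrary matching intervals of $T_\xi$: one must show that the graph whose vertices are matching intervals in $T_\xi$ and whose edges are period-doubling or nested-tuning adjacencies is connected. Equivalently, every admissible ${\bf n}$ in the sense of Theorem~\ref{Thm:Komega} must be reducible to the empty word by a finite sequence of these symbolic moves, so that continuity can progressively equate the associated constants. Handling the accumulation points of the nested cascades using only continuity, without passing through the bijection with Nakada's continued fraction bifurcation set recorded in Remark~\ref{rem:CT3}, is the main technical obstacle.
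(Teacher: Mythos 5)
Your reduction of the theorem to ``local constancy on the union of neutral matching intervals plus continuity plus a connectivity argument'' contains a genuine gap, and it is exactly the gap you flag at the end: continuity alone cannot upgrade local constancy on an open dense full-measure set to global constancy, because that is precisely the devil's-staircase scenario the theorem has to exclude. Inside a neutral window the bifurcation set $\EE\cap\bar J$ is an uncountable Cantor-type set (of positive Hausdorff dimension, cf.\ Theorem~\ref{thm:top}), so two matching intervals in $\bar J$ are in general \emph{not} joined by any finite chain of period-doubling or tuning adjacencies: between them sit uncountably many points of $\EE$, and the adjacency graph you want to induct over is not connected. Your proposed combinatorial induction on the admissible ${\bf n}$-codes therefore cannot terminate at the empty word for all pseudocenters, and ``handling the accumulation points using only continuity'' is not a technical detail but the whole problem. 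A second unjustified step is the identification of $\bar J$ with the closure of a maximal neutral tuning window $T_\xi$: whether every neutral window coincides with a tuning window is stated in the paper as the open Question (Q2), so it cannot be used as an ingredient of the proof.

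The paper avoids both issues. For the metric entropy it uses the Keller--Liverani regularity quantitatively, not just for continuity: $h_\mu$ is $\eta$-H\"older for every $\eta<1$, and since a neutral window is bounded away from $0$, Theorem~\ref{thm:typical} gives $HD(J\cap\EE)<1$; hence $HD(h(J\cap\EE))\le\frac1\eta HD(J\cap\EE)<1$, and as $h(J\cap\EE)=h(J)$ is an interval it must be a single point. For the topological entropy (for which no H\"older estimate is available) the paper argues via periodic orbits: as $\gamma$ moves through a small $U\subset J$, only periodic points lying in $U\cap\EE$ (right endpoints of matching intervals, Lemma~\ref{L:pee} and Remark~\ref{R:PP}) can change period, and the auxiliary map $\tilde Q_\gamma$ together with Lemma~\ref{lem:moststable} (uniqueness of the measure of maximal entropy, which must charge $V$) shows those orbits have exponential growth rate strictly below $\inf_{\gamma'\in U}h_{top}(Q_{\gamma'})$; hence the growth rate of $n$-periodic points, i.e.\ $h_{top}$, is locally constant on $J$. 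If you want to salvage your outline, you would need to replace the connectivity step by an argument of this dimension-versus-regularity type (or by the orbit-counting argument), since the symbolic self-similarity by itself only reaches countably many of the required intervals.
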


\begin{proof}
By Corollary 1 of \cite{KL} we deduce that the dependence of the invariant 
density $d\mu_\gamma$ upon $\gamma$ is  $\eta$-H\"older for any $\eta<1$. 
Consequently also the map $h(\gamma):= h_{\mu_\gamma}(Q_\gamma)$ is $\eta$-H\"older. 
Now let $J\subset [0,s/(s+1)] $ be a neutral window, since the origin is accumulated by non-neutral 
matching intervals it must happen that $\inf J\geq \delta>0$, hence  $HD(J \cap \EE)<1$.

We have thus that by the H\"older property of $h$
$$
HD(h(J \cap \EE))\leq \frac{1}{\eta}HD(J \cap \EE)<1,
$$
where the last inequality above is due to the fact that $h$ is $\eta$-H\"older for any $\eta<1$.
On the other hand since J is a neutral window $h(J \cap \EE)=h(J)$ is an interval, so the fact that $HD(h(J \cap \EE))<1$ implies that $h(J)$ consists of a single point \ie $h$ is constant on $J$.

Now for the topological entropy, let $\gamma_0 \in J$ be arbitrary, and
$U \owns \gamma_0$ is a small neighborhood.
The aim is to show that $\gamma \mapsto h_{top}(Q_\gamma)$
is constant on $U$, so that consequently  $\gamma \mapsto h_{top}(Q_\gamma)$
is constant on the whole tuning window $J$. The idea is that as $\gamma$ moves up through $U$, 
relatively few periodic orbits can change period, so that the exponential 
growth-rate of $n$-periodic points remains unchanged as $\gamma$ varies in $U$.
Although we need to adjust the size of $U$ once in the proof below,
it  holds that $h_{top}(Q_\gamma)$ is locally constant at $\gamma_0$ and since
$\gamma_0$ is arbitrary, $h_{top}(Q_\gamma)$ is constant on $J$.

Clearly, a periodic point $p$ 
undergoes a bifurcation as $\gamma = p$, and one can split the analysis in two:
\begin{enumerate}
 \item $p$ lays in the interior of some matching interval;
 \item $p\in \EE$ and it is the right endpoint of a matching interval (c.f. Lemma \ref{L:pee}).
 
\end{enumerate}

In case (1) the bifurcation has no effect, since per($p$) does not change as $\gamma$ crosses $p$ (c.f.\ Remark \ref{R:PP}). 
Thus, to prove our claim, it suffices to prove that for $U$ sufficiently small, 
the exponential growth rate of $n$-periodic points $p \in U\cap \EE$  is smaller 
than $\inf_{\gamma \in U} h_{top}(Q_\gamma)$.
 
Let $V = [0,v]$ for some $0 < v < \inf U$, and define

\begin{figure}[h]
\begin{center}
\unitlength=3mm
\begin{picture}(28,10)(0,0)
\put(-5,6){$\tilde Q_\gamma(x) = \begin{cases}
1 & \text{ if } x \in V,\\
Q_{\gamma}(x) & \text{ otherwise.}
\end{cases}$}
\thinlines
\put(20,2){\line(1,0){10}} \put(22,0){\line(0,1){10}}
\put(26.7,1.7){\line(0,1){0.6}}\put(26.4,0.8){\tiny $1$}
\put(24.7,1.7){\line(0,1){0.6}}\put(24.4,0.8){\tiny $\gamma$}
\put(21.7,6.7){\line(1,0){0.6}}\put(21,6.5){\tiny $1$}
\thicklines
\put(22,6.7){\line(1,0){1}} \put(22,6.8){\line(1,0){1}} 
\put(22.7,1.7){\line(0,1){0.6}}\put(22.4,0.8){\tiny $v$}
\put(17.2,1.9){\line(1,1){4.8}} \put(24.7,10.3){\line(1,-2){5}} 
\put(17.2,2){\line(1,1){4.8}} \put(24.7,10.4){\line(1,-2){5}} 
\put(23,7.7){\line(1,1){1.7}} \put(23,7.8){\line(1,1){1.7}} 
\end{picture}
\end{center}
\end{figure}
\iffalse
\begin{equation}\label{eq:tildeQ}
\tilde Q_\gamma(x) = \begin{cases}
1 & \text{ if } x \in V,\\
Q_{\tilde\gamma}(x) & \text{ otherwise.}
\end{cases}
\end{equation}
\fi
Then $p$ has a periodic orbit for $\tilde Q_\gamma$ if and only if $p$ 
has a periodic orbit for $Q_\gamma$ avoiding $V$.
Lemma~\ref{lem:moststable} shows that 
$h_{top}(\tilde Q_\gamma) < \inf_{\gamma' \in U} h_{top}(Q_{\gamma'})$
provided $U$ is sufficiently small.
Now item (4) follows because 
$h_{top}(\tilde Q_\gamma) = \lim_n \frac1n \log \#\{ n\text{-periodic points of $Q_\gamma$ avoiding } V\}$.
Observe also that if $\orb(p) \cap U = \emptyset$, then $p$
undergoes no bifurcation if $\gamma$ varies in $U$.
This concludes the proof.
\end{proof}

\begin{lemma}\label{lem:moststable}
Let  $\tilde Q_\gamma$ be as in the previous proof.
If $U$ is sufficiently small, then 
$h_{top}(\tilde Q_\gamma) < h_{top}(Q_{\gamma'})$
for every $\gamma, \gamma' \in U$.
\end{lemma}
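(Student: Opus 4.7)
The plan is to identify $h_{top}(\tilde Q_\gamma)$ with the topological entropy of $Q_\gamma$ restricted to its \emph{survivor set}, and then produce a strict Perron--Frobenius drop which we then promote to a uniform drop on $U$. Since $\tilde Q_\gamma$ collapses $V$ onto the fixed point $1$, every $\tilde Q_\gamma$-periodic orbit different from $\{1\}$ corresponds uniquely to a $Q_\gamma$-periodic orbit that never enters $V$, and conversely. Setting
$$
\Omega_V(\gamma) \ := \ \bigcap_{n\ge 0} Q_\gamma^{-n}\bigl(\R\setminus V\bigr),
$$
we therefore have $h_{top}(\tilde Q_\gamma) = h_{top}(Q_\gamma|_{\Omega_V(\gamma)})$, and the lemma reduces to a strict and uniform entropy drop for the open system $Q_\gamma|_{\Omega_V(\gamma)}$.

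For a fixed $\gamma_0\in U$, the next step is to exhibit a finite Markov coding in which the hole $V$ is transparent. Using Lemma~\ref{lem:dense_preperiodic} I would pick $v\in(0,\inf U)$ preperiodic under $Q_{\gamma_0}$; refining the prematching partition by the (finite) orbit of $v$ produces a finite Markov partition for $Q_{\gamma_0}$ in which $V=[0,v]$ is a union of atoms. Let $A$ be the corresponding non-negative transition matrix, so that $h_{top}(Q_{\gamma_0})=\log\rho(A)$; the restricted map $Q_{\gamma_0}|_{\Omega_V(\gamma_0)}$ is coded by the principal submatrix $\tilde A$ obtained by deleting the rows and columns indexed by atoms contained in $V$. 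By Lemma~\ref{lem:acip} the map $Q_{\gamma_0}$ is ergodic and eventually expanding, whence $A$ is irreducible and aperiodic on the indices meeting the support of the acip; since $V$ is open with nonempty interior, its atoms belong to this irreducible block. The classical strict Perron--Frobenius inequality for principal submatrices of an irreducible non-negative matrix then yields $\rho(\tilde A)<\rho(A)$, i.e. $h_{top}(\tilde Q_{\gamma_0})<h_{top}(Q_{\gamma_0})$.

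The final step is to upgrade this pointwise drop at $\gamma_0$ to a uniform drop on $U$. By Remark~\ref{rem:smooth}, the entries of $A_\gamma$ and $\tilde A_\gamma$ depend analytically on $\gamma$ so long as the combinatorics of the partition does not change. Because $J$ is a neutral window, it is free of non-neutral matching bifurcations, and I can choose $U$ so small that the symbolic structure of both $Q_\gamma$ and $\tilde Q_\gamma$ is locally constant in $\gamma$. Then both $\gamma\mapsto h_{top}(Q_\gamma)=\log\rho(A_\gamma)$ and $\gamma\mapsto h_{top}(\tilde Q_\gamma)=\log\rho(\tilde A_\gamma)$ are continuous (in fact real-analytic) on $U$, so the strict inequality at $\gamma_0$ persists on a neighborhood, and one further shrinking of $U$ gives $\sup_{\gamma\in U}h_{top}(\tilde Q_\gamma)<\inf_{\gamma'\in U}h_{top}(Q_{\gamma'})$, as required.

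The main obstacle is the Perron--Frobenius step: one must verify that $A$ really is irreducible on the block containing the atoms of $V$, i.e., that the dynamics of $Q_{\gamma_0}$ genuinely sees the hole and does not merely act on a proper invariant subset disjoint from $V$. Granted topological mixing on the support of the acip---which follows from uniqueness of the acip together with eventual expansion---the strict drop follows automatically; the remaining combinatorial bookkeeping (choice of $v\in\QS$, refinement of the prematching partition, and analytic dependence on $\gamma$) is routine in the integer-slope setting.
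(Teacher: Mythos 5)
Your overall strategy (survivor set plus a strict Perron--Frobenius drop, then continuity) is a genuinely different route from the paper's, but it has a real gap at the coding step. You claim that choosing $v$ preperiodic and refining the prematching partition by the (finite) orbit of $v$ produces a \emph{finite Markov partition} for $Q_{\gamma_0}$ in which $V=[0,v]$ is a union of atoms. This fails for two reasons. First, the lemma is needed for \emph{every} $\gamma_0$ in the neutral window $J$, and $J\cap\EE$ is uncountable (of positive Hausdorff dimension); for $\gamma_0\in\EE$ there is no matching at all, hence no prematching partition to refine. Second, even when $\gamma_0$ lies in a matching interval, the prematching partition is finite but \emph{not} Markov: the forward orbit of the matching point is generically infinite, so images of partition endpoints are generically not partition endpoints (this is visible in the paper's explicit examples, whose transition matrices contain the non-integer entries $\theta$ and $1-\theta$). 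Adding the finite orbit of $v$ cannot repair this, because the obstruction is the orbit of the discontinuity $\gamma_0$ itself; finite Markov partitions exist only for countably many parameters (e.g.\ $\gamma_0\in\QS$ or endpoints of matching intervals). So at a typical $\gamma_0\in J$ there is no finite matrix $A$ to which your strict-submatrix inequality can be applied, and the pointwise inequality $h_{top}(\tilde Q_{\gamma_0})<h_{top}(Q_{\gamma_0})$ is not established. (A related soft spot: Lebesgue-ergodicity from Lemma~\ref{lem:acip} does not by itself give irreducibility of a topological transition matrix on the block containing $V$; some form of topological transitivity must be proved.)

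The uniformity step also leans on a false premise: you shrink $U$ so that ``the symbolic structure of $Q_\gamma$ and $\tilde Q_\gamma$ is locally constant in $\gamma$'', but at parameters of $\EE\cap J$ the kneading data is precisely \emph{not} locally constant---that is what makes them bifurcation parameters; neutrality of the window constrains matching indices on matching intervals, not the symbolic dynamics on $\EE$. What does survive of your plan is the initial reduction (periodic orbits of $\tilde Q_\gamma$ are exactly the $Q_\gamma$-periodic orbits avoiding $V$) and the final appeal to continuity of $\gamma\mapsto h_{top}(Q_\gamma)$, which should be justified by general continuity results for piecewise monotone maps rather than by local constancy of codings. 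For comparison, the paper avoids Markov codings entirely: it proves that $Q_\gamma$ is topologically transitive on the core interval $K$ by an interval-growth argument, invokes Hofbauer's theorem to obtain a \emph{unique} measure of maximal entropy $\mu$ with $\mu(V)>0$, and then observes that the maximal-entropy measure of $\tilde Q_\gamma$, pulled back through the constant-slope semiconjugacy, is a $Q_\gamma$-invariant measure supported off $V$, hence distinct from $\mu$, forcing $h_{top}(\tilde Q_\gamma)<h_{top}(Q_\gamma)$; continuity of $h_{top}$ then yields the uniform inequality on a small $U$. If you wish to keep a matrix-theoretic argument, you would need Hofbauer's countable Markov extension or approximation by finite transitive subsystems, not the prematching partition.
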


\begin{proof}
Take $\eps \in (0,1)$ arbitrary and $G^-_\eps = [\gamma-\eps,\gamma^-]$ and 
$G^-_\eps = [\gamma^+, \gamma+\eps]$
(where $\gamma^\pm$ refer to the right/left limit of the discontinuity point $\gamma$).
Write $h = h_{top}(Q_\gamma)$ and let
$K = [s^2(\gamma-1)+1, s(1-\gamma)+1] = [Q^2_\gamma(\gamma^+), Q_\gamma(\gamma^+)]$.
Clearly $Q_\gamma(K) = K$.

Since $s \geq 2$, $Q_\gamma^3(G^-_\eps)$ and $Q_\gamma^2(G^+_\eps)$ 
are intervals of length $4\eps$ and in fact $Q_\gamma^n(G^\pm_\eps)$ contain
intervals of length $4\eps$ for all $n \geq 3$. 
Hence, if $n\geq 3$ is so large that $\gamma \in Q_\gamma^n(G^-_\eps)$,
then $Q_\gamma^n(G^-_\eps) \supset G^-_{2\eps}$ or $G^+_{2\eps}$.
Repeating this argument, we find  that $\orb(G^\pm_\eps) = K$,
and the argument of Lemma~\ref{lem:dense_preperiodic} then
gives that $Q_\gamma$ is transitive\footnote{In the more general family considered in \cite{CM},
transitivity is not guaranteed.}.
%The same argument applies, in fact, to $\tilde Q_\gamma$.
%Let $Y = K \setminus \cup_n Q_\gamma^{-n}(V)$.

Since $Q_\gamma:K \to K$ is transitive, and
some iterate of $Q_\gamma$ is expanding on $K$, 
$Q_\gamma$  supports a unique measure of maximal entropy $\mu$
 and $\mu(V) > 0$, see \cite{Hof}. 

Now $\tilde Q|_K$ is entropy-preservingly semi-conjugate (say via $\psi$)
to a map with slope $\pm e^{\tilde h}$ where 
$\tilde h = h_{top}(\tilde Q_\gamma)$.
Let $\tilde \nu$ be the measure of maximal entropy of this map, and
$\nu = \tilde \nu \circ \psi$.
Then $0 = \nu(\tilde Q(V)) \ge \nu(V)$, because $\nu$ is non-atomic.
It follows that $\supp(\nu) \cap V = \emptyset$, and definitely
$\nu \neq \mu$. However, $\nu$ is not only 
$\tilde Q_\gamma$-invariant, but also $Q_\gamma$-invariant.
Since $\mu$ is the unique measure of maximal entropy of $Q_\gamma$,
it follows that $\tilde h < h$.
Finally, by taking $U$ small we can assume by the
continuity $\gamma \mapsto h_{top}(Q_\gamma)$ 
that $h_{top}(\tilde Q_\gamma) < h_{top}(Q_{\gamma'})$ for 
all $\gamma, \gamma' \in U$.
\end{proof}

The question whether every neutral window is indeed a tuning window
will be discussed as Question (Q2) in the next section.

\section{Numerical evidence and open problems}\label{sec:numerical}
Before speaking about numerical evidence it is good to provide some background information on the objects 
we are interested in, and how we can explore them numerically. 

\subsection{How do we compute?} Let us just recall that there are essentially three different ways of 
computing, namely (a) built-in hardware floating point arithmetic; (b) arbitrary precision arithmetic; 
(c) exact arithmetic (or symbolic) computations. The first method is the default, since it is fast and the 
precision, which is fixed,  is largely adequate for most applications: the {\em double-precision} 
floating-point format available on most modern computers provides about 16 correct decimal digits. 
Method (b) can carry over computations using any (finite) number of correct digits, thus 
going beyond the built-in hardware precision. Finally, method (c) produces an exact result, 
let it be an algebraic number, a binary expansion or a kneading sequence. Method (a) relies 
on the built-in hardware representation of floating point numbers while methods (b) and (c) 
are computationally more expensive and only come with specific libraries or mathematical software
such as Sage, Mathematica or Maple.

When computing with {\em finite precision} (\ie employing methods (a) or (b)) we must be aware of the 
difference between the concepts of {\em precision} and {\em accuracy}: roughly speaking, the term {\em precision} 
indicates the number of digits used to represent floating point numbers, while the {\em accuracy} of a 
computation refers to the number of significant digits of its result. Often accuracy is just slightly 
smaller than precision, 
and yet there are cases where these two quantities differ strongly.
If this happens we say we are facing an {\em ill-conditioned problem}. Overlooking this issue can even 
lead to computations which  produce absurd results because they gain no significant digit at all.

\subsection{What do we compute?}
\begin{description}
\item[Invariant measure and metric entropy of $Q_\gamma$] In principle a numerical approximation of the 
invariant measure $\mu_\gamma$ can be obtained exploiting the fact that the frequency with which a 
typical orbit visits a small interval $I$ is asymptotic to $\mu_\gamma (I)$. These computations also provide 
information about the entropy of $Q_\gamma$.
Indeed, by the Rokhlin formula 
$$
h(Q_\gamma, \mu_\gamma)=\int_R \log|Q_\gamma'(x)|d\mu_\gamma (x)=\log(s)\mu_\gamma([\gamma, +\infty).
$$
Unfortunately this general method is not very effective, and may even fail due to the fact that the computer 
might systematically choose  non-typical points. This failure actually takes place if we use this strategy
and compute with fixed precision the entropy of $Q_\gamma$ when the slope $s=2$: in this case  the problem is 
caused by  the correlation between the slope and the internal binary representation of floating point numbers. 

However, when $\gamma$ belongs to some matching interval, one can use an algorithm which is both more robust 
and much more effective in order to determine the invariant measure (and hence the entropy) of $Q_\gamma$. 
Indeed, we know {\em a priori} that  the invariant density is constant on the complement of the prematching 
set, and computing the invariant density boils down to solving a linear system, an operation which can 
be easily done 
using exact arithmetic. Thus we used this method to compute numerically the metric entropy in 
cases when the matching condition is (or seems to be) dense (see Figure~\ref{fig:entropies} 
and \ref{fig:entro_index}).

Formula~\eqref{eq:interpolation} provides yet another approach to compute the metric entropy on matching intervals: indeed
the  entropy on a matching interval $(a,b)$ only depends on  $h(a)$ and $h(b)$, and these values can 
be computed in a standard way since,  when $\gamma$ equals one of  the endpoints of a matching interval, 
then the map $Q_\gamma$ admits a Markov partition.

\item[Matching intervals] What we discussed just above shows that finding matching intervals for the 
parametric family $(Q_\gamma)_\gamma$  comes with some very precise information about the behavior 
entropy on such parameter values.

By Theorem~\ref{thm:algebraic}, matching can occur in the family $(Q_\gamma)$ if the slope $s$ is an 
algebraic integer, and the quest for matching intervals is indeed an algebraic problem which can be 
dealt with using exact arithmetic in the algebraic number field $\Q[s]$. 

In our numerical computations we adopt the following strategy: we fix a grid of points belonging to  $\Q[s]$ and a safety threshold $N$, then for every $\bar{\gamma}$  belonging to the chosen grid  we check if $Q_{\bar{\gamma}}$ satisfies 
the matching condition with matching exponents $\kappa^\pm \leq N$; if this happens we then determine the endpoints of the matching interval containing $\bar{\gamma}$ by
solving a system of linear equations in $\Q[s]$. We must use a threshold $N$ in order to avoid that 
our algorithm  gets stuck in an excessively long computation (or even infinite - in case $Q_{\bar{\gamma}}$ does 
not satisfy the matching property); and we will have to increase $N$ as we go after smaller and 
smaller matching intervals. %% picture of size vs matching index?

In the particular case that the slope $s$ is an integer, by  the results of Section~\ref{sec:prevalence} we know  that the  endpoints of every matching intervals are rational and are easily deduced from the pseudocenter. This  provides a much more efficient way of computing matching intervals:
 given an interval $(c,d)$ with $c,d \in \Q \cap \EE$  (for instance we might start 
setting $a:=0$ and $b:=s/(s+1)$) we pick the unique $\xi \in \Q_s \cap (c,d)$ with lowest denominator, this $\xi$  is the pseudocenter a matching 
interval $(\xi_L,\xi_R) \subset (c,d)$ (see Proposition~\ref{P:maximal}); 
 since both $\xi_L$ and $\xi_R$ are bifurcation values 
we can then repeat the same construction to find matching intervals inside $(c,\xi_L)$ and $(\xi_R,d)$ (if these are non-empty intervals). Going on with this bisection algorithm we can reach any fixed matching interval  contained in $(c,d)$  in a finite number of steps.
Let us point out that all these computations are carried out in exact arithmetic (using expansions in base $s$), moreover the 
matching index relative to the matching intervals we find are computed by means of the closed formula 
of Theorem~\ref{thm:matching_index}. 

Let us mention that an analogous strategy works for searching tuning windows.
 
\item[Kneading determinant and topological entropy] We compute the topological entropy through kneading 
invariants. For $s\in\N$ and $\gamma\in \Q$, this quantity can 
be computed with high accuracy: indeed in this case  the map $Q_\gamma$ admits a 
Markov partition and the kneading determinant is a rational function $R_\gamma(t)=p_\gamma(t)/q_\gamma(t)$  
which we compute using exact arithmetic. On the other hand
$h_{top}(Q_\gamma)=-\log(t^*)$ where $t^*$ is the largest positive root of the polynomial $p_\gamma$.
Therefore  we compute the value of the topological entropy with the same accuracy we get for polynomial root finding. 
\end{description}

\subsection{Questions about integer slopes.}
The results of the previous sections provide a detailed description of 
the behavior of the entropy when $s\in \N$, yet some questions remain open. Indeed, even if the numerical 
evidence is quite 
clear we do not have yet a rigorous answer to the following questions:
\begin{enumerate}
\item[(Q1)] Do $h_\mu$ and $h_{top}$ really attain their maximum values on the top plateau $M_s$? %:= [s/(s+1)-1/s^2, s/(s+1)]$?
\item[(Q2)] Does every neutral window coincide with the tuning window generated by some neutral interval?
\end{enumerate}
Let us focus on the latter issue, which is more subtle and admits some partial result.

One can prove that if a neutral window $J$ intersects a non-neutral tuning 
window $(\xi_T, \xi_R)$ then $J\subset (\xi_T, \xi_R)$; thus, by virtue of the canonical homeomorphism 
described in Remark~\ref{rem:CT3}, we can use the results of \cite{CT3} 
to conclude that $J$ coincides with some neutral tuning window. 
In particular, if $w\in \QMAX$ with $\|w\|\neq 0$, then the neutral tuning window of 
endpoints $\xi_L=.\overline{\check{v}v}$ and $\xi'_T=.\check{v}\check{w}\overline{v\check{v}}$ is a 
plateau for the entropy. Indeed, it only contains neutral intervals, it is adjacent to a non neutral 
interval on the right and is accumulated on the left by the non neutral 
intervals $I_{\xi_n}$ with $\xi_n=.\check{v}\check{w}v(\check{v}v)^n$; for instance if $s=2$ and $w=0010$ 
we get that the entropy has a plateau on the interval $[125/1152,1/9]$. 

Question (Q2) admits a positive answer if and only if the following claim is true:

{\bf Claim:} Every neutral tuning window which is {\em primitive} (i.e. it is not properly contained in 
another tuning window) is accumulated both on the right and on the left by non-neutral matching intervals.

This claim can indeed be checked in many particular cases, for instance if $s=2$ and $w=00001111$ we 
have that the tuning window $[\xi_T,\xi_R]$ has endpoints $\xi_T=.000011101\overline{11110000}$ and 
$\xi_R=.\overline{00001111}$ which are accumulated by the matching intervals with 
pseudocenters $.000011101(11110000)^n11110$  and $.0000111011(11100001)^n$, respectively.

\subsection{Irrational slopes}
\begin{figure}[h]
\includegraphics[scale=0.45]{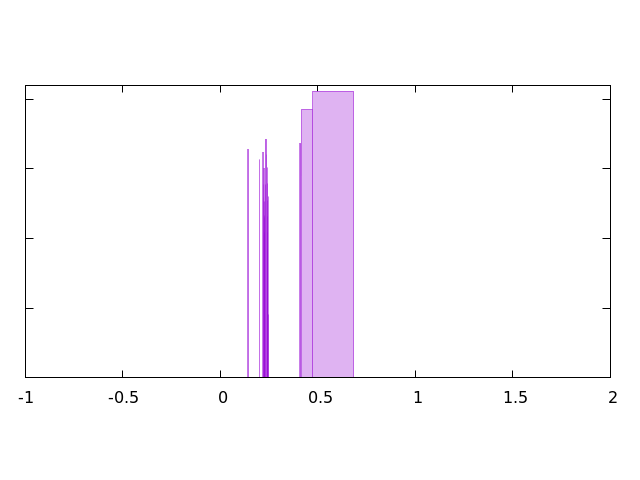}\label{s1pr5}
\caption{The boxes in this picture are built on the matching intervals of $(Q_\gamma)$ for slope $s=\sqrt{5}+1$ 
and their heights are different, depending on the size of the interval. 
The largest matching interval, $(2\sqrt{5}-4, \frac{6}{5}\sqrt{5}-2)$, has matching exponents $(5,6)$ 
and has an adjacent neutral matching interval on the left, $(\frac{24+46\sqrt{5}}{31},  2\sqrt{5}-4)$ 
with matching exponents $(9,9)$. Note that the matching set does not exhaust parameter space; 
in particular there are no matching intervals outside $[0,1]$.}
\end{figure}

As we mentioned in the introduction, the slope $s$ does not need to belong to $\N$ for matching to occur.
Note that  matching may occur for a particular value of $s$ without implying that 
matching is prevalent in the family $(Q_\gamma)_\gamma$.
For instance, for $s=\sqrt{5}-1$ one can find a few matching intervals even if there  is certainly no matching 
interval intersecting the half line $(-\infty,0)$ (this last statement follows easily from 
Remark~\ref{rem:btrans} together with the results of \cite{BCK}).

On the other hand there are several choices for the slope $s$ which seem to lead 
to prevalent matching in the family $(Q_\gamma)_\gamma$; in fact in such cases the entropy has the 
same self-similar features observed when the slope $s\geq 2$ is an integer value. 

One first example of this can be observed 
when the slope $s$ is a quadratic Pisot irrational ($s=(\sqrt{5}+1)/2$, for instance).
Numerical evidence suggests that matching is prevalent, one can also observe the period doubling phenomenon inside the window $[0,s/(s+1)]$, 
and it seems that the bifurcation set has complex fractal structure even inside every plateau of the entropy, 
but complete proofs of all these features are still missing. 
\begin{figure}[h]
\includegraphics[scale=0.4]{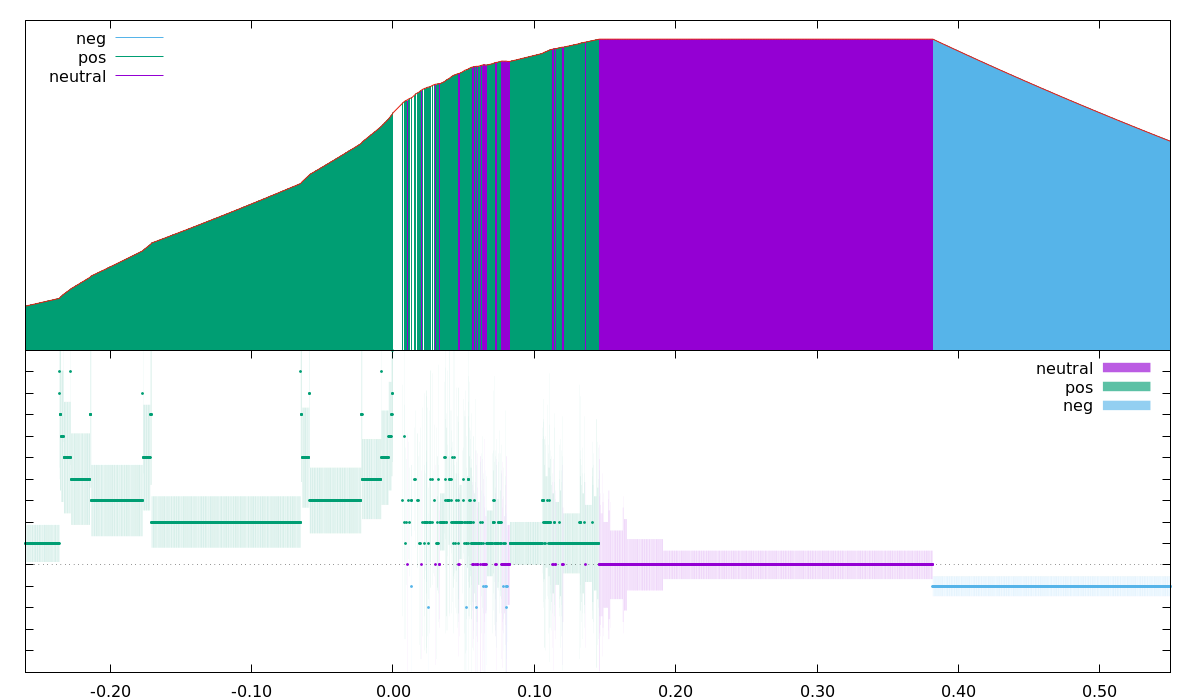}
\caption{$Q_\gamma$ with $s=(\sqrt{5}+1)/2$. Comparing entropy (above) and matching index (bottom); 
in the bottom picture a vertical whisker has been plotted around the value of the matching index, 
the height of the whisker is proportional to the sum of the matching exponents $k_1+k_2$. 
Here we see that the entropy behaves very much like the integer slope case; one can detect 
the first few occurrences of period-doubling, but one can also see that the top plateau 
contains more than a single period-doubling cascade.}
\label{fig:entro_index}
\end{figure}

The plateau which can be seen in Figure~\ref{fig:entro_index} contains many matching intervals; 
the largest being $(\frac{3-\sqrt{5}}{4}, \frac{3-\sqrt{5}}{2})$. 
Numerical evidence suggests the top plateau occurs for  $\gamma \in \frac{7-3\sqrt{5}}{2}, \frac{3-\sqrt{5}}{2}]$). 
\begin{figure}[h]
\includegraphics[scale=0.4]{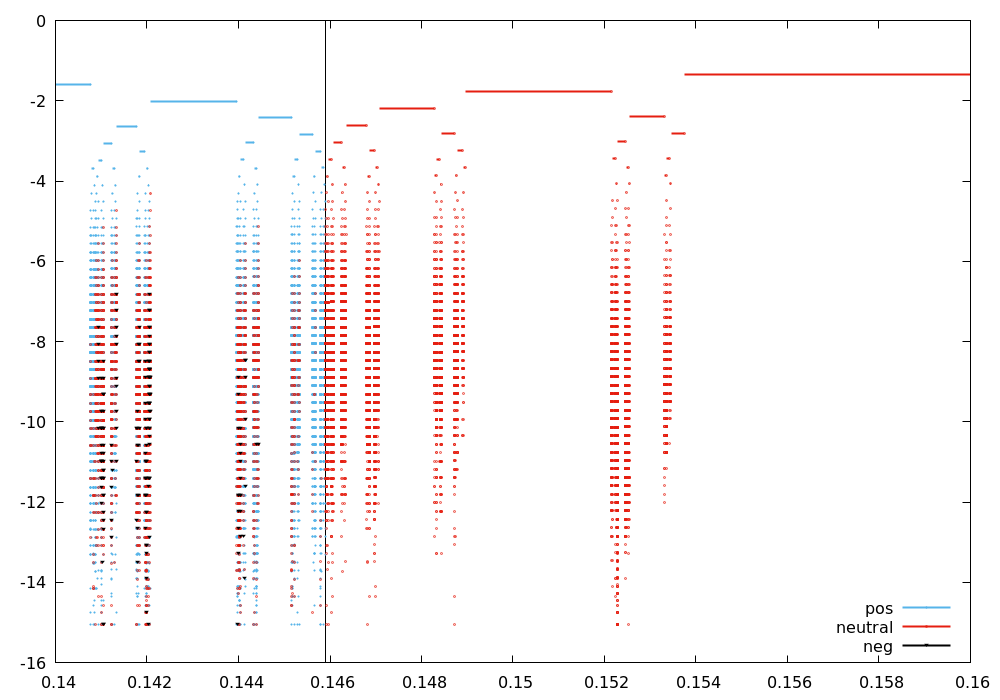}
\caption{$Q_\gamma$ with $s=(\sqrt{5}+1)/2$. Matching intervals plotted at different levels (accordingly to their sizes). It seems that the left endpoint of the first tuning window is $\frac{7-3\sqrt{5}}{2}=0.145898033750315...$. }
\end{figure}

A peculiar feature which marks a difference with the integer slope cases is 
that when the slope is irrational the bifurcation set $\EE$ is not bounded.
Using Remark~\ref{rem:btrans} and the results of \cite{BCK} once again one can prove that $\EE$ has 
in fact a periodic structure outside the bounded interval $[0,s/(s+1)]$.

\begin{figure}[h]
\includegraphics[scale=0.4]{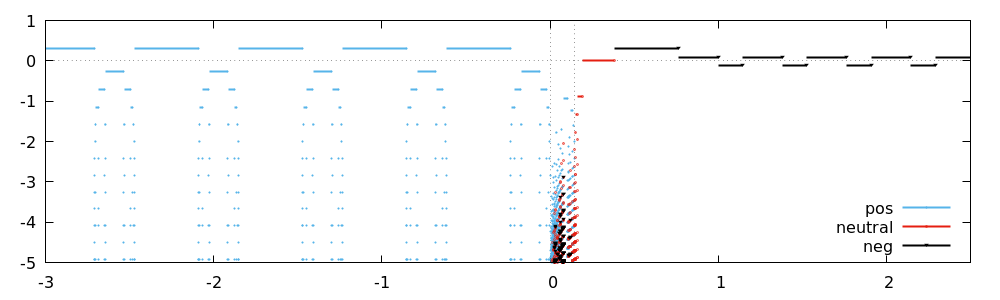}
\caption{$Q_\gamma$ with $s=(\sqrt{5}+1)/2$. Same picture as before, but zooming out it is now evident 
that the periodic structure which extends both on the left and on the right. }
\end{figure}

 With the same techniques can also obtain partial results about prevalence.
 For instance the result of \cite{BCK} imply that for all values $s$ 
 which are quadratic Pisot, $HD( \EE \cap (-\infty, 0))<1$.

\end{document}